\documentclass[11pt]{article}
\pdfoutput=1

\usepackage[a4paper,margin=2.9cm]{geometry}
\usepackage{amsmath,amssymb,amsthm,amsfonts}
\usepackage{bm}
\usepackage{comment}
\usepackage{thmtools}
\usepackage{graphicx}
\usepackage{booktabs}
\usepackage{caption}
\usepackage{subcaption}
\usepackage{enumitem}
\usepackage[colorlinks=true,allcolors=blue]{hyperref}
\usepackage{cleveref}

\theoremstyle{plain}
\newtheorem{theorem}{Theorem}[section]
\newtheorem{proposition}[theorem]{Proposition}
\newtheorem{lemma}[theorem]{Lemma}

\theoremstyle{definition}
\newtheorem{assumption}[theorem]{Assumption}
\newtheorem{remark}[theorem]{Remark}

\crefname{theorem}{theorem}{theorems}
\Crefname{theorem}{Theorem}{Theorems} 
\crefname{lemma}{lemma}{lemmas} 
\Crefname{lemma}{Lemma}{Lemmas} 
\crefname{proposition}{proposition}{propositions} 
\Crefname{proposition}{Proposition}{Propositions} 
\crefname{corollary}{corollary}{corollaries} 
\Crefname{corollary}{Corollary}{Corollaries} 
\crefname{assumption}{assumption}{assumptions} 
\Crefname{assumption}{Assumption}{Assumptions}

\newcommand{\bu}{\bm{u}}
\newcommand{\bv}{\bm{v}}
\newcommand{\bw}{\bm{w}}
\newcommand{\bq}{\bm{q}}
\newcommand{\bff}{\bm{f}}
\newcommand{\bg}{\bm{g}}
\newcommand{\bn}{\bm{n}}
\newcommand{\bt}{\bm{t}}
\newcommand{\bx}{\bm{x}}

\newcommand{\bzeta}{\bm{\zeta}}
\newcommand{\bsig}{\bm{\sigma}}
\newcommand{\beps}{\bm{\varepsilon}}
\newcommand{\btau}{\bm{\tau}}
\newcommand{\bvt}{\bm{\vartheta}}
\newcommand{\bI}{\bm{\mathbb{I}}}
\newcommand{\bK}{K}
\newcommand{\bS}{S}
\newcommand{\bE}{\bm {\mathcal {E}}}
\newcommand{\bG}{\bm{G}}
\newcommand{\bL}{\bm{\mathcal{L}}}
\newcommand{\bhu}{\hat{\bu}}
\newcommand{\bhv}{\hat{\bv}}
\newcommand{\bhw}{\hat{\bw}}
\newcommand{\bhq}{\hat{\bq}}
\newcommand{\bp}{\bm{p}}
\newcommand{\bhp}{\hat{\bp}}
\newcommand{\bhe}{\hat{\bm{e}}}
\newcommand{\jump}[1]{[\![#1]\!]}
\newcommand{\Th}{\mathcal{T}_h}
\newcommand{\Fh}{\mathcal{F}_h}
\newcommand{\Pk}[1]{\mathbb{P}^{#1}}
\newcommand{\bPk}[1]{{\bm{P}}^{#1}}
\newcommand{\sPk}[1]{\bm{\mathcal{P}}^{#1}}
\newcommand{\Vh}{\hat{\bm{V}}_h}
\newcommand{\Vhz}{\hat{\bm{V}}_{h0}}
\newcommand{\VT}{\hat{\bm{V}}_T}
\newcommand{\tnorm}[1]{|\!|\!|#1|\!|\!|}
\newcommand{\CC}{\mathbb{C}}
\newcommand{\Iint}{\hat{\bm{I}}}
\newcommand{\dtt}{\partial_{tt}}
\newcommand{\dt}{\partial_{t}}
\DeclareMathOperator{\tr}{tr}
\DeclareMathOperator{\dive}{div}

\begin{document}

\title{\textbf{An Unfitted Hybrid High-Order Method for\\ the Elastodynamics Problem with Imperfect Interface}}
\author{Peiqi Huang\thanks{Department of Applied Mathematics, Nanjing Forestry University,
Nanjing 210037, People's Republic of China. E-mail: \texttt{pqhuang@njfu.edu.cn}.}
\ \ and Erik Burman\thanks{Department of Mathematics, University College London,
Gower Street, London WC1E 6BT, UK. E-mail: \texttt{e.burman@ucl.ac.uk} (corresponding author).}}
\date{}
\maketitle

\begin{abstract}
We design and analyse an unfitted hybrid high-order (HHO) method for the elastic wave
equation in a medium made of two components separated by an imperfect interface of linear
slip type, across which the traction is continuous and the displacement jump is
proportional to the traction through a compliancy tensor
$\bK=\alpha\bI+(\beta-\alpha)\bn\otimes\bn$. The mesh is not fitted to the interface: the
discrete unknowns are doubled in the cut cells, the small cuts are cured by a cell
agglomeration procedure, and no unknown is attached to the interface. The two specific
ingredients of the method are a local symmetric strain reconstruction in each cut subcell,
which incorporates the interface condition through the regularised interface stiffness
$\bS_h=(h_T\delta^{-1}\bI+\bK)^{-1}$ in the spirit of Hansbo and Hansbo {\em{A finite element method for the simulation of strong and weak
  discontinuities in solid mechanics.}}
{Comput. Methods Appl. Mech. Engrg.}, 193, 2004, and an interface
stabilisation built from the same matrix. A single formulation therefore covers the whole
range of compliancies, from the perfectly bonded interface $\alpha=\beta=0$, where the
interface stabilisation acts as a Nitsche penalty, to the traction-free interface obtained
as $\alpha,\beta\to+\infty$. For the space semi-discrete problem we prove that the discrete
bilinear form is coercive and continuous, and we derive an energy-error estimate of order
$h^{k+1}$ and an $L^2$-error estimate of order $h^{k+2}$, with constants independent of the
compliancy parameters and of how the interface cuts the mesh. The scheme is combined either
with the Newmark scheme, which conserves a discrete energy exactly, or with singly
diagonally implicit Runge--Kutta schemes of order up to four. Numerical experiments in two
dimensions confirm the predicted convergence rates for $k\in\{1,2,3\}$, the robustness with
respect to the compliancy over sixteen orders of magnitude, and illustrate the propagation
of elastic waves across an unresolved slipping interface.
\end{abstract}

\noindent\textit{Keywords:} hybrid high-order methods, unfitted methods, cell agglomeration,
elastodynamics, imperfect interface, linear slip interface, error estimates.

\medskip
\noindent\textbf{Mathematics Subject Classification:} 65M60, 65M15, 65N30, 74S05, 74J05.

\section{Introduction}\label{sec:intro}

Elastic waves propagate in composite materials, in glued or welded assemblies and in
geological media across a large number of material interfaces. When the two components are
perfectly bonded, both the displacement and the normal traction are continuous across the
interface. In many situations of practical interest the bonding is imperfect: a thin
adhesive layer, a damaged zone or a micro-cracked region between the two materials allows
the two sides to slide or to separate. A standard way of modelling such an interface without
resolving the thin layer is to keep the traction continuous but to let the displacement jump
be proportional to it,
\begin{equation}\label{eq:slip}
\jump{\bsig(\bu)\bn}=\bm{0},\qquad \jump{\bu}=-\bK\bsig(\bu)\bn \qquad\text{on }\Gamma,
\end{equation}
where the symmetric positive semi-definite tensor $\bK$ measures the compliancy of the
interface. For an isotropic interface, $\bK=\alpha\bI+(\beta-\alpha)\bn\otimes\bn$ with
$\alpha\ge0$ and $\beta\ge0$ the tangential and the normal compliancy. These conditions are
known as \emph{linear slip}, spring-type or imperfect interface conditions
\cite{Schoenberg1980,PhamMaurelMarigo2021}; they interpolate between the perfectly bonded
regime $\bK={\mathsf{0}}$ and the limit $\alpha,\beta\to+\infty$, in which the two subdomains
decouple and $\Gamma$ becomes a traction-free boundary for each of them. In the time domain
such interfaces act as frequency-dependent filters and are routinely used in seismic
modelling to represent fracture networks. Fitted discontinuous Galerkin methods have been applied to seismic modelling in the presence of fractures, without considering robustness with respect to the compliancy. We refer to 
\cite{de2016elastic,lambrecht2018nodal,MF19}.

The numerical approximation of the associated wave equation raises four distinct
difficulties. First, the exact solution is discontinuous across $\Gamma$, so that either the
mesh resolves the interface, or the discretisation must be able to represent the jump inside
the mesh cells. Second, the compliancy parameters typically vary over many orders of
magnitude, and a discretisation whose stability or accuracy degenerates in either of the two
extreme regimes is of little practical use. Third, if the interface is not resolved by the
mesh, the cells cut by $\Gamma$ may carry arbitrarily small subcells, which spoils the
conditioning of the discrete problem and, in the worst case, its stability. Fourth, the
scheme should preserve a discrete energy so that long-time simulations do not suffer from
spurious dissipation or growth.

These difficulties have been addressed separately in the literature. Hansbo and Hansbo
\cite{HansboHansbo2004} proposed an unfitted finite element method for the elasticity
problem \eqref{eq:slip}, based on a doubling of the degrees of freedom in the cells cut by
the interface and on a Nitsche-type formulation involving a regularised interface stiffness
$\bS_h=(h\delta^{-1}\bI+\bK)^{-1}$; their method treated the whole range of compliancies with
one and the same formulation and was shown to be optimally convergent, uniformly in $\alpha$
and $\beta$, for piecewise affine approximations; see also \cite{HansboLarson2022}. Hybrid
high-order (HHO) methods, introduced in \cite{DiPietroErnLemaire2014} for diffusion problems
and in \cite{DiPietroErn2015} for linear elasticity, provide arbitrary-order, locking-free
discretisations on general polytopal meshes; they are built from a local reconstruction
operator and a local stabilisation operator, they support hanging nodes and polytopal cells,
and the cell unknowns can be eliminated locally so that the global problem only couples the
face unknowns. Unfitted HHO discretisations of elliptic interface problems, in which the
interface is not resolved by the mesh and the small cut cells are handled by agglomeration,
were devised and analysed in \cite{BurmanErn2018,BCDE2021} and extended to the Stokes
interface problem in \cite{BurmanDelayErn2021}. The HHO discretisation of the wave equation
was studied numerically in \cite{BurmanDuranErn2022CAMC} and analysed in
\cite{BDES2021}, where optimal $H^1$- and $L^2$-error estimates were established for the
space semi-discrete second-order formulation in time; unfitted HHO methods for the scalar
wave equation were considered in \cite{BurmanDuranErn2022CMAME}. Hybrid methods including HDG and weak Galerkin were presented to discretize the space variables for the linear acoustic wave equation in \cite{ErnSteins2024,JanaMohapatraDeka2026}, second order time derivative was approximated by the leapfrog scheme and the implicit Newmark scheme, and optimal convergence in space and time was proved. Discontinuous Galerkin
methods for elastodynamics on polytopal meshes were developed in
\cite{AntoniettiMazzieri2018,AntoniettiBonaldiMazzieri2020}, and cut finite element methods
for the elastic wave equation were proposed in \cite{StickoLudvigssonKreiss2019}. Recently, a fitted HHO
method of arbitrary order covering the whole range of interface compliancies for the static
elasticity problem was proposed and analysed in \cite{HuangBurman2024}. A comprehensive study
of the elastodynamics problem discretised using the HHO method, including the
elasto-acoustic coupling, high-order Runge--Kutta time stepping and large-scale geophysical
applications, has recently been developed in \cite{Mottier2025,MottierErnGuillot2026}; unfitted HHO methods are
also considered therein, but only for scalar elliptic interface problems, the ill-cut cells
being cured by polynomial extension \cite{BurmanErnMottier2025} rather than by
agglomeration. 

The purpose of the present work is to combine these ingredients: we design and analyse an
unfitted HHO method of arbitrary order $k\ge1$ for the elastodynamics problem with a linear
slip interface. The method has two specific ingredients. The first one is a local symmetric
strain reconstruction in each cut subcell, in which the interface condition is built into the
definition of the operator on the softer side of the interface. 
 The second is an interface stabilisation weighted by
$\bS_h$, which penalises the jump of the {\emph{cell unknowns}} across $\Gamma$. 
The two extreme
regimes are then recovered automatically: when $\alpha=\beta=0$ one has $\bI-\bS_h\bK=\bI$
and $\bS_h=\delta h_T^{-1}\bI$, so that the interface stabilisation is a Nitsche penalty and
the reconstruction enforces the continuity of the displacement weakly, whereas for
$\alpha,\beta\gg h_T/\delta$ one has $\bS_h\simeq\bK^{-1}$, which is the physical interface
stiffness, and $\bI-\bS_h\bK\simeq\mathsf{0}$, so that the two subdomains decouple.

Our main results concern the space semi-discrete problem: the coercivity and the continuity
of the discrete bilinear form (\Cref{thm:stab}), an energy-error estimate of order $h^{k+1}$
(\Cref{prop:energy}) and an $L^2$-error estimate of order $h^{k+2}$ (\Cref{prop:L2}),
obtained through a Ritz-type projection in the spirit of \cite{Baker1976,Wheeler1973}, with
constants that do not depend on the compliancy parameters nor on how the interface cuts the
mesh. The auxiliary results on the wave equation that we use are borrowed from
\cite{BDES2021}, and those concerning unfitted HHO discretisations from \cite{BCDE2021}. The
space semi-discrete scheme is combined either
with the Newmark scheme, which conserves a discrete energy exactly, or with singly
diagonally implicit Runge--Kutta schemes of order up to four. The theoretical results are confirmed in \Cref{sec:num} by two-dimensional
numerical experiments on three test cases of increasing difficulty.

The rest of the paper is organised as follows. In \Cref{sec:model} we introduce the
elastodynamics problem with an imperfect interface and its weak formulation. The unfitted
HHO method is described in \Cref{sec:method}. \Cref{sec:prelim} collects the technical tools
used in the analysis. The stability and error analysis is carried out in \Cref{sec:analysis}
and the time discretisation is discussed in \Cref{sec:time}. Numerical experiments are
presented in \Cref{sec:num}, and conclusions are drawn in \Cref{sec:concl}.

\section{An elastodynamics problem with an imperfect interface}\label{sec:model}

Let $\Omega$ be a connected, convex polygonal/polyhedral domain in $\mathbb{R}^d$,
$d\in\{2,3\}$, with boundary $\partial\Omega$, and let $\Gamma$ be a smooth internal
interface of class $C^2$ dividing $\Omega$ into two subdomains $\Omega_1$ and $\Omega_2$, so
that $\overline{\Omega}=\overline{\Omega_1}\cup\overline{\Omega_2}$ and
$\Gamma=\partial\Omega_1\cap\partial\Omega_2$; see \Cref{fig:domain}. Throughout the paper we
use subscripts to denote the restriction of a function to a subdomain, and vectors and
tensors are typed in bold face. For any sufficiently regular function $\bu$ in
$\Omega_1\cup\Omega_2$ we define the jump of $\bu$ across $\Gamma$ by
$\jump{\bu}=\bu_1|_\Gamma-\bu_2|_\Gamma$, and $\bn=\bn_\Gamma$ denotes the unit normal
vector to $\Gamma$ pointing from $\Omega_1$ to $\Omega_2$.

For an open bounded domain $D\subset\mathbb{R}^d$ we use the standard notation for the
Sobolev spaces $H^m(D)$, $m\ge0$, equipped with the norm $\|\cdot\|_{H^m(D)}$ and seminorm
$|\cdot|_{H^m(D)}$, and we denote the corresponding spaces of vector-valued functions and of
symmetric tensor-valued functions by $\bm{H}^m(D)=[H^m(D)]^d$ and
$\bm{\mathcal{H}}^m(D)=[H^m(D)]^{d\times d}_{\rm sym}$, respectively. We set
$\bm{H}^1(\Omega_1\cup\Omega_2)=\{\bv\in\bm{L}^2(\Omega)\mid
\bv|_{\Omega_i}\in\bm{H}^1(\Omega_i),\ i=1,2\}$ and
$\bm{H}^1_0(\Omega_1\cup\Omega_2)=\{\bv\in\bm{H}^1(\Omega_1\cup\Omega_2)\mid
\bv|_{\partial\Omega}=\bm{0}\}$. Let $J=(0,T)$ be the time interval with $T>0$. Given
$\ell\in\mathbb{N}$ and a Hilbert space $\mathbb{H}$, we write $C^\ell(\overline{J};\mathbb{H})$
for the space of $\mathbb{H}$-valued functions that are $\ell$ times continuously
differentiable in $J$.

\begin{figure}[htbp]\centering
\includegraphics[width=\linewidth]{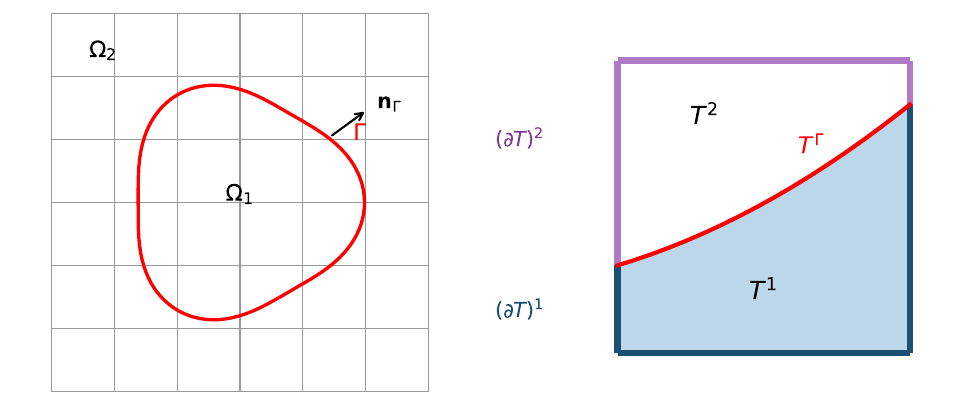}
\caption{Left: a domain $\Omega$ divided into the subdomains $\Omega_1$ and $\Omega_2$ by the
interface $\Gamma$, which is not resolved by the background mesh. Right: notation for a cut
cell $T\in\Th^\Gamma$; $(\partial T)^1$ is drawn in dark blue and $(\partial T)^2$ in
purple.}\label{fig:domain}
\end{figure}

We consider the following elastodynamics problem with discontinuities in the Lam\'e
parameters along the material interface $\Gamma$: find the displacement
$\bu:J\times\Omega\to\mathbb{R}^d$ such that
\begin{equation}\label{eq:model}
\left\{
\begin{aligned}
\bsig(\bu)&=\CC\beps(\bu)=2\mu\beps(\bu)+\lambda(\nabla\!\cdot\!\bu)\bI
&&\text{in }J\times(\Omega_1\cup\Omega_2),\\
\varrho\,\dtt\bu-\nabla\!\cdot\!\bsig(\bu)&=\bff
&&\text{in }J\times(\Omega_1\cup\Omega_2),\\
\jump{\bu}+\bK\bsig(\bu_1)\bn&=\bg_D,\qquad
\jump{\bsig(\bu)\bn}=\bg_N &&\text{on }J\times\Gamma,\\
\bu(0)=\bu_0,\quad \dt\bu(0)&=\bq_0 &&\text{in }\Omega_1\cup\Omega_2,\\
\bu&=\bm{0}&&\text{on }J\times\partial\Omega,
\end{aligned}
\right.
\end{equation}
where $\CC=2\mu\bm{\mathbb{I}}+\lambda\bI tr$ is the fourth-order elasticity tensor,
$\varrho>0$ is the mass density, $\bff\in L^2(J;\bm{L}^2(\Omega))$ is the external force,
$\bg_D\in L^2(J;\bm{H}^{1/2}(\Gamma))$, $\bg_N\in L^2(J;\bm{L}^2(\Gamma))$,
$\bu_0\in\bm{H}^1_0(\Omega_1\cup\Omega_2)$ and
$\bq_0\in\bm{H}^1_0(\Omega_1\cup\Omega_2)$. The strain tensor is
$\beps(\bu)=\frac12(\nabla\bu+\nabla\bu^{\mathsf T})$ and $\bI=[\delta^{ij}]_{i,j=1}^d$ is
the identity tensor. The Lam\'e parameters are
\[
\lambda=\frac{E\nu}{(1-2\nu)(1+\nu)},\qquad \mu=\frac{E}{2(1+\nu)},
\]
with $E$ the Young modulus and $\nu$ the Poisson ratio; they satisfy $0<c\le\mu\le C$ and
$0\le\lambda\le C$, so that the incompressible case is excluded. The coefficients
$\varrho,\mu,\lambda$ are piecewise constant on the partition $\{\Omega_1,\Omega_2\}$, and
we write $\mu_i,\lambda_i,\varrho_i$ for their values in $\Omega_i$ and
$\kappa_i=\lambda_i/\mu_i$. Without loss of generality we assume that
\begin{equation}\label{eq:softside}
\mu_1\le\mu_2,
\end{equation}
i.e.\ $\Omega_1$ is the softer material; the traction appearing in the first interface
condition of \eqref{eq:model} is then evaluated on the softer side. We finally set
\begin{equation}\label{eq:delta}
\delta:=2\mu_1+d\lambda_1 .
\end{equation}

\begin{remark}[Nonhomogeneous interface condition]\label{rem:nonhomoge IC}
The first interface condition of \eqref{eq:model} can be written in the more general form
$\jump{\bu}+\bK\{\bsig(\bu)\bn\}_\omega=\tilde\bg_D$ with the weighted average
$\{\bv\}_\omega=\omega_1\bv_1+\omega_2\bv_2$, $\omega_1+\omega_2=1$. Since the traction jump
is prescribed, $\{\bsig(\bu)\bn\}_\omega=\bsig(\bu_1)\bn-\omega_2\bg_N$, so that all these
formulations coincide up to a modification of the right data with $\bg_D=\tilde\bg_D+\omega_2K\bg_N$, and we work with
$\omega=(1,0)$ throughout. The choice \eqref{eq:softside} of the softer side is the natural
one in the highly contrasted case $\mu_1\ll\mu_2$, in the spirit of
\cite{BCDE2021,ErnStephansenZunino2009}.
\end{remark}

A reasonable functional setting for the weak solution of \eqref{eq:model} is
$\bu\in L^2(J;\bm{H}^1_0(\Omega_1\cup\Omega_2))$, $\dt\bu\in L^2(J;\bm{L}^2(\Omega))$ and
$\dtt\bu\in L^2(J;\bm{H}^{-1}(\Omega_1\cup\Omega_2))$. Assuming
$\bu\in H^2(J;\bm{L}^2(\Omega))$, we have for a.e.\ $t\in J$
\begin{equation}\label{eq:weak}
(\varrho\,\dtt\bu(t),\bw)_\Omega+a(\bu(t),\bw)=\ell(\bw)
\qquad\forall\,\bw\in\bm{H}^1_0(\Omega_1\cup\Omega_2),
\end{equation}
where
\begin{align}
a(\bv,\bw)&=\sum_{i=1,2}\Big(2\mu_i(\beps(\bv),\beps(\bw))_{\Omega_i}
+\lambda_i(\nabla\!\cdot\!\bv,\nabla\!\cdot\!\bw)_{\Omega_i}\Big)
+\big\langle \bK^{-1}\jump{\bv},\jump{\bw}\big\rangle_\Gamma,\label{eq:a}\\
\ell(\bw)&=(\bff,\bw)_\Omega+\big\langle \bK^{-1}\bg_D,\jump{\bw}\big\rangle_\Gamma
+\langle\bg_N,\bw_2\rangle_\Gamma.\label{eq:ell}
\end{align}
Here $(\cdot,\cdot)_D$ and $\langle\cdot,\cdot\rangle_S$ denote the $L^2$-inner products on a
domain $D\subset\mathbb{R}^d$ and on a manifold $S$ of codimension one. As in
\cite{HansboHansbo2004,HuangBurman2024}, when $\bK$ is singular the two terms involving
$\bK^{-1}$ are understood in the sense of the pseudo-inverse of $\bK$ and the trial and test
space is restricted to the fields whose jump lies in the range of $\bK$; the perfectly bonded
case $\bK=\mathsf{0}$ then corresponds to $\jump{\bv}=\bg_D$. The discretisation designed below
covers all these situations with one and the same formulation, without any case distinction.

\section{The unfitted HHO method}\label{sec:method}

\subsection{Unfitted meshes}\label{ssec:meshes}

Let $\{\Th\}_{h>0}$ be a sequence of shape-regular meshes of $\Omega$ with mesh size
$h=\max_{T\in\Th}h_T$, where $h_T$ is the diameter of the cell $T$. The mesh $\Th$ is
composed of nonempty disjoint cells such that $\overline\Omega=\bigcup_{T\in\Th}\overline T$.
The mesh cells are conventionally open subsets of $\mathbb{R}^d$ (not necessarily convex) and
they can have a polygonal/polyhedral shape with straight edges (if $d=2$) or planar faces (if
$d=3$); this setting in particular allows for meshes with hanging nodes, and hence for meshes
produced by a cell agglomeration procedure. The mesh faces are collected in the set $\Fh$. In
a nutshell, each mesh $\Th$ admits a matching simplicial submesh $\Th'$ such that any cell
(or face) of $\Th'$ is a subset of a cell (or face) of $\Th$, and there is a mesh-regularity
parameter $\rho>0$ such that, for all $h>0$, any $T\in\Th$ and any $S\in\Th'$ with $S\subset T$,
one has $\rho h_S\le r_S$ and $\rho h_T\le h_S$, where $r_S$ denotes the inradius of $S$.

The meshes are \emph{not} fitted to $\Gamma$. We define the partition
$\Th=\Th^1\cup\Th^\Gamma\cup\Th^2$, where
\begin{equation}\label{eq:cellpart}
\Th^i:=\{T\in\Th\mid T\subset\Omega_i\},\quad i=1,2,\qquad
\Th^\Gamma:=\{T\in\Th\mid T\cap\Gamma\neq\emptyset\}
\end{equation}
collect, respectively, the uncut cells lying inside $\Omega_i$ and the cells cut by the
interface. For each cut cell $T\in\Th^\Gamma$ and all $i=1,2$ we set
\begin{equation}\label{eq:subcells}
T^i:=T\cap\Omega_i,\qquad T^\Gamma:=T\cap\Gamma,
\end{equation}
and the boundary of the subcell $T^i$ is decomposed as
$\partial(T^i)=(\partial T)^i\cup T^\Gamma$ with $(\partial T)^i:=\partial T\cap\Omega_i$. In
order to unify the notation, for every uncut cell $T\in\Th^i$, $i=1,2$, we set
\[
T^i:=T,\quad T^{\bar i}:=\emptyset,\quad (\partial T)^i:=\partial T,\quad
(\partial T)^{\bar i}:=\emptyset,\quad T^\Gamma:=\emptyset,
\]
where $\bar i=3-i$. Similarly, for $F\in\Fh$ we set $F^i:=F\cap\Omega_i$ and
$\mathcal{F}_{(\partial T)^i}:=\{F^i\mid F\in\mathcal{F}_{\partial T}\}$, where
$\mathcal{F}_{\partial T}$ is the collection of the faces composing $\partial T$.

Three additional assumptions on the meshes are needed, as is classical for unfitted methods
\cite{BurmanErn2018,BCDE2021}. The first one quantifies how irregular the mesh cells can
become due to the intersecting interface (and provides the discrete inverse inequalities of
\Cref{lem:disc}), the second one quantifies how well the mesh resolves the interface (and
provides the multiplicative trace inequality of \Cref{lem:mtr}), and the third one requires
the meshes to be not too graded. Let $B(\bx,r)$ be the ball of centre $\bx\in\mathbb{R}^d$
and radius $r>0$.

\begin{assumption}[Cut cells]\label{ass:cut}
There is $\zeta\in(0,1)$ such that, for any $T\in\Th^\Gamma$ and all $i=1,2$, there is
$\check\bx_{T^i}\in T^i$ such that $B(\check\bx_{T^i},\zeta h_T)\subset T^i$.
\end{assumption}

\begin{assumption}[Resolving $\Gamma$]\label{ass:resolve}
There is $\gamma\in(0,1)$ such that, for any $T\in\Th^\Gamma$, there is a point
$\check\bx_T\in\mathbb{R}^d$ such that, setting $T^\dagger:=B(\check\bx_T,\gamma^{-1}h_T)$,
the following holds: (i) $T\subset T^\dagger$; (ii) for any $\bm{s}\in T^\Gamma$,
$d(\check\bx_T,T_{\bm{s}}\Gamma)\ge\gamma h_T$, where $T_{\bm{s}}\Gamma$ is the tangent plane
to $\Gamma$ at $\bm{s}$; (iii) for any $F\in\mathcal{F}_{\partial T}$, there is
$\bx_F\in T^\dagger$ such that $d(\bx_F,F)\ge\gamma h_T$.
\end{assumption}

\begin{assumption}[Mild mesh grading]\label{ass:grading}
For any $T\in\Th$, let the neighbouring layers $\triangle_j(T)\subset\mathbb{R}^d$ be defined
by induction as $\triangle_0(T):=T$ and
$\triangle_{j+1}(T):=\bigcup\{\overline{T'}\mid T'\in\Th,\ \overline{T'}\cap
\overline{\triangle_j(T)}\ne\emptyset\}$. There is $n_0\in\mathbb{N}$ such that, for all
$T\in\Th$, the ball $T^\dagger$ of \Cref{ass:resolve} satisfies
$T^\dagger\subset\triangle_{n_0}(T)$.
\end{assumption}

\Cref{ass:resolve,ass:grading} are satisfied as soon as the mesh is fine enough with respect
to the curvature of $\Gamma$, see \cite[Lem.~6.1 and Lem.~3.3]{BurmanErn2018}.
\Cref{ass:cut} is \emph{not} satisfied by a generic background mesh, since the interface may
cut off an arbitrarily small portion of a cell; it is enforced by agglomerating the cells
carrying a small cut to a neighbouring cell, as described in \cite[Sect.~4.3]{BCDE2021},
which produces a (locally) polyhedral mesh in the sense of \Cref{ssec:meshes} and which
guarantees that the agglomeration does not propagate beyond nearest neighbours. The
agglomeration procedure is purely geometric and independent of the discretisation; we use a
simplified two-stage version of it in \Cref{sec:num}. An alternative, less intrusive cure
for the ill-cut cells, which leaves the mesh unchanged and replaces the agglomeration by a
polynomial extension in the definition of the reconstruction operator, has been proposed in
\cite{BurmanErnMottier2025,Mottier2025}; it would be of interest to combine it with the
present formulation.

\subsection{Discrete unknowns and the regularised interface stiffness}\label{ssec:unknowns}

Let $k\ge1$ be the polynomial degree. The discrete unknowns for the displacement are
piecewise polynomials of degree $k$ attached to the mesh faces and of degree $k+1$ attached
to the mesh cells; the degree $k\ge1$ is needed for the local Korn inequality of
\Cref{lem:disc}. For any subset $S\subset\mathbb{R}^d$ consisting of one mesh (sub)cell or
one mesh (sub)face, and for any $l\in\mathbb{N}$, we denote by $\Pk{l}(S)$ (resp.\
$\bPk{l}(S)$, $\sPk{l}(S)$) the space of scalar-valued (resp.\ vector-valued,
symmetric-matrix-valued) polynomials in $S$ of degree at most $l$. 
Whenever $S=\emptyset$ we abuse the notation by writing $\Pk{l}(S)=\{0\}$ and
$(\cdot,\cdot)_S=0$.

Let $T\in\Th$. For all $i=1,2$ we set
$\Pk{k}(\mathcal{F}_{(\partial T)^i}):=\times_{F\in\mathcal{F}_{\partial T}}\Pk{k}(F^i)$ and
we define the local discrete unknowns as
\begin{equation}\label{eq:localdofs}
\bhv_T:=\big(\bv_{T^1},\bv_{T^2},\bv_{(\partial T)^1},\bv_{(\partial T)^2}\big)\in\VT
:=\bPk{k+1}(T^1)\times\bPk{k+1}(T^2)\times
\bPk{k}(\mathcal{F}_{(\partial T)^1})\times\bPk{k}(\mathcal{F}_{(\partial T)^2}).
\end{equation}
It is also convenient to regroup the local unknowns on each side of the interface by means of
the notation $\bhv_{T^i}:=(\bv_{T^i},\bv_{(\partial T)^i})\in
\hat{\bm{V}}_{T^i}:=\bPk{k+1}(T^i)\times\bPk{k}(\mathcal{F}_{(\partial T)^i})$, $i=1,2$. Note
that \emph{no discrete unknown is attached to $T^\Gamma$}: for any $T\in\Th^\Gamma$ and any
$\bhv_T\in\VT$, the discrete jump
\begin{equation}\label{eq:djump}
\jump{\bv_T}_\Gamma:=\bv_{T^1}|_{T^\Gamma}-\bv_{T^2}|_{T^\Gamma}
\end{equation}
is recovered \emph{a posteriori} from the two cell unknowns.

In order to design a method that is robust with respect to the discontinuous Lam\'e
parameters and with respect to all the compliancy parameters of $\bK$, we define, following
\cite{HansboHansbo2004}, the \emph{regularised interface stiffness}
\begin{equation}\label{eq:Sh}
\bS_h|_T:=\Big(\frac{h_T}{\delta}\bI+\bK\Big)^{-1},\qquad \forall\,T\in\Th,
\end{equation}
with $\delta=2\mu_1+d\lambda_1$ as in \eqref{eq:delta}. Since $\bK$ is symmetric positive
semi-definite and $h_T/\delta>0$, the matrix $\bS_h|_T$ is well defined, symmetric positive
definite, and commutes with $\bK$, for every value of the compliancy parameters; no case
distinction and no perturbation of $\bK$ is needed.

\subsection{Local reconstruction and stabilisation}\label{ssec:local}

The first key ingredient of the HHO method is a local symmetric strain reconstruction in each
mesh cell. For any $T\in\Th$ and $i=1,2$, we define the local discrete symmetric gradient
reconstruction operator $\bE_{T^i}:\VT\to\sPk{k}(T^i)$ as follows. On the softer side, the
interface condition is built into the operator: $\bE_{T^1}(\bhv_T)$ is such that, for all
$\btau\in\sPk{k}(T^1)$,
\begin{multline}\label{eq:E1}
(\bE_{T^1}(\bhv_T),\btau)_{T^1}
+\big\langle(\bI-\bS_h\bK)\bK\,\bG_{T^1}(\bhv_T)\bn,\btau\bn\big\rangle_{T^\Gamma}\\
=(\beps(\bv_{T^1}),\btau)_{T^1}
+\big\langle\bv_{(\partial T)^1}-\bv_{T^1},\btau\bn_T\big\rangle_{(\partial T)^1}
-\big\langle(\bI-\bS_h\bK)\jump{\bv_T}_\Gamma,\btau\bn\big\rangle_{T^\Gamma},
\end{multline}
where $\bn_T$ is the unit outward normal to $T$ and where we have set
\begin{equation}\label{eq:G}
\bG_{T^i}(\bhv_T):=\CC_i\bE_{T^i}(\bhv_T)
=2\mu_i\bE_{T^i}(\bhv_T)+\lambda_i\,\tr\bE_{T^i}(\bhv_T)\,\bI .
\end{equation}
On the stiffer side, no interface contribution is present: $\bE_{T^2}(\bhv_T)$ is such that,
for all $\btau\in\sPk{k}(T^2)$,
\begin{equation}\label{eq:E2}
(\bE_{T^2}(\bhv_T),\btau)_{T^2}=(\beps(\bv_{T^2}),\btau)_{T^2}
+\big\langle\bv_{(\partial T)^2}-\bv_{T^2},\btau\bn_T\big\rangle_{(\partial T)^2}.
\end{equation}
Note that $\bE_{T^2}$ only depends on $\bhv_{T^2}$, whereas $\bE_{T^1}$ depends on both
$\bhv_{T^1}$ and $\bv_{T^2}$, through the discrete jump \eqref{eq:djump}. Taking
$\btau=q\bI$ in \eqref{eq:E1}--\eqref{eq:E2} shows that the local discrete divergence
reconstruction $D_{T^i}(\bhv_T):=\tr\bE_{T^i}(\bhv_T)\in\Pk{k}(T^i)$ satisfies, for all
$q\in\Pk{k}(T^1)$,
\begin{multline}\label{eq:D1}
(D_{T^1}(\bhv_T),q)_{T^1}
+\big\langle(\bI-\bS_h\bK)\bK\,\bG_{T^1}(\bhv_T)\bn,q\bn\big\rangle_{T^\Gamma}\\
=(\nabla\!\cdot\!\bv_{T^1},q)_{T^1}
+\big\langle\bv_{(\partial T)^1}-\bv_{T^1},q\bn_T\big\rangle_{(\partial T)^1}
-\big\langle(\bI-\bS_h\bK)\jump{\bv_T}_\Gamma,q\bn\big\rangle_{T^\Gamma},
\end{multline}
and similarly for $D_{T^2}$.

\begin{lemma}[Well-posedness of the reconstruction]\label{lem:wp}
For any $T\in\Th$ and any $\bhv_T\in\VT$, the linear systems \eqref{eq:E1} and \eqref{eq:E2}
have a unique solution.
\end{lemma}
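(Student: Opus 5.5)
Both \eqref{eq:E1} and \eqref{eq:E2} are square linear systems in a finite-dimensional space: the unknown $\bE_{T^i}(\bhv_T)$ ranges over $\sPk{k}(T^i)$, and the test functions range over the same space. It therefore suffices to show that the left-hand side defines an injective map. Equivalently, if the right-hand side vanishes (for instance $\bhv_T=0$) and $\bE\in\sPk{k}(T^i)$ solves the homogeneous system, then $\bE=0$.

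For \eqref{eq:E2}, and for \eqref{eq:E1} when $T\notin\Th^\Gamma$ (so that $T^\Gamma=\emptyset$), the left-hand side is the $L^2(T^i)$ Gram form on $\sPk{k}(T^i)$. Testing with $\btau=\bE$ gives $\|\bE\|_{T^i}^2=0$, so $\bE=0$; no further work is needed.

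The main point is \eqref{eq:E1} on a cut cell. Here we test the homogeneous system with $\btau=\bE$, which is admissible because the test space coincides with the trial space. This gives
\[
\|\bE\|_{T^1}^2+\big\langle(\bI-\bS_h\bK)\bK\,\CC_1\bE\bn,\bE\bn\big\rangle_{T^\Gamma}=0 .
\]
The interface term is not symmetric in $\bE$ as written, so the key step is to show it is nonnegative. We use two facts.

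First, the matrix $M:=(\bI-\bS_h\bK)\bK$ is symmetric positive semi-definite. Indeed $\bS_h$ and $\bK$ commute and can be diagonalised simultaneously. On an eigenvector of $\bK$ with eigenvalue $\kappa\ge0$, $M$ acts by
\[
\Big(1-\frac{\kappa}{h_T/\delta+\kappa}\Big)\kappa=\frac{(h_T/\delta)\,\kappa}{h_T/\delta+\kappa}\in\Big[0,\frac{h_T}{\delta}\Big).
\]
Thus $M$ is bounded uniformly in the compliancy parameters.

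Second, we must compare $\CC_1\bE\bn$ with $\bE\bn$. Since $\CC_1\bE\bn=2\mu_1\bE\bn+\lambda_1\tr(\bE)\bn$, the $2\mu_1$ part contributes $2\mu_1\langle M\bE\bn,\bE\bn\rangle\ge0$. The $\lambda_1$ part, $\lambda_1\tr(\bE)\langle M\bn,\bE\bn\rangle$, has no sign.

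I would first try to exploit the isotropic structure $\bK=\alpha\bI+(\beta-\alpha)\bn\otimes\bn$, for which $\bn$ is an eigenvector of $M$ with eigenvalue $m_n\ge0$. The troublesome term then reduces to $\lambda_1 m_n\tr(\bE)\,(\bE\bn\cdot\bn)$, which is still indefinite, so this alone will not close the argument.

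The fallback I would commit to is a smallness argument using the bound $|M|\le h_T/\delta$. By the discrete trace inequality on the subcell (available from \Cref{ass:cut}, via the discrete inverse/trace inequalities announced in \Cref{lem:disc}), $\|\bE\bn\|_{T^\Gamma}\le C_{\rm tr}h_T^{-1/2}\|\bE\|_{T^1}$. Combining this with $|\CC_1\bE|\le\delta|\bE|$ in a suitable norm, the interface term is bounded in absolute value by $C_{\rm tr}^2\|\bE\|_{T^1}^2$.

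This yields injectivity only if $C_{\rm tr}^2<1$, which is not guaranteed. So I expect this to be the genuine obstacle. The precise way out would be to observe that \eqref{eq:E1} is linear in $\bE$ with the operator $\bI+A$, where $A\bE$ is the Riesz representative of $\btau\mapsto\langle M\CC_1\bE\bn,\btau\bn\rangle_{T^\Gamma}$. Writing $A=B^*M\CC_1$-type factors, with $B\btau:=\btau\bn|_{T^\Gamma}$, the operator $\bI+B^*MB\,\CC_1$ is similar to an operator of the form $\bI+(\text{positive})\cdot(\text{positive})$. Its spectrum then lies in $\{\Re z\ge1\}$, because the product of two symmetric positive semi-definite operators has nonnegative real spectrum. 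This requires that $\CC_1$ be symmetric positive definite on symmetric tensors, which holds since $\mu_1>0$ and $\lambda_1\ge0$, and that $\CC_1$ be used to weight the inner product.

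Concretely, I would rewrite the system in the $\CC_1$-weighted inner product by testing with $\btau=\CC_1\bE$, which is admissible since $\CC_1$ maps $\sPk{k}(T^1)$ onto itself. This gives
\[
(\bE,\CC_1\bE)_{T^1}+\big\langle M\CC_1\bE\bn,\CC_1\bE\bn\big\rangle_{T^\Gamma}=0 .
\]
Both terms are nonnegative and the first is $\ge 2\mu_1\|\bE\|_{T^1}^2$, hence $\bE=0$. This choice of test function is the decisive step, and it makes the trace-inequality detour unnecessary.
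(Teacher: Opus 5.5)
Your final argument is correct and is exactly the paper's proof. You test the homogeneous system \eqref{eq:E1} with the admissible choice $\btau=\CC_1\bE$. Then $(\bE,\CC_1\bE)_{T^1}$ and $\langle(\bI-\bS_h\bK)\bK\,\CC_1\bE\bn,\CC_1\bE\bn\rangle_{T^\Gamma}$ are both nonnegative, by the positive definiteness of $\CC_1$ and the positive semi-definiteness of $(\bI-\bS_h\bK)\bK$. The preliminary attempt with $\btau=\bE$, the trace-inequality smallness argument and the spectral remark are unnecessary and can be dropped.
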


\begin{proof}
Both are square linear systems, so it suffices to prove injectivity. For \eqref{eq:E2} this
is clear. For \eqref{eq:E1}, let $\bE:=\bE_{T^1}(\bhv_T)$ solve \eqref{eq:E1} with zero
right-hand side and take $\btau=\CC_1\bE=\bG_{T^1}(\bhv_T)$, which is an admissible test
function. This gives
\[
(\bE,\CC_1\bE)_{T^1}
+\big\langle(\bI-\bS_h\bK)\bK\,\CC_1\bE\bn,\CC_1\bE\bn\big\rangle_{T^\Gamma}=0 .
\]
Since $\CC_1$ is symmetric positive definite and, by \eqref{eq:ImSK} below,
$(\bI-\bS_h\bK)\bK=\tfrac{h_T}{\delta}\bS_h\bK$ is symmetric positive semi-definite, both
terms are nonnegative, whence $\bE=\bm{0}$.
\end{proof}

The second key ingredient is the local stabilisation operator
${\bf S}:\hat{\bm{V}}_{T^i}\to\bPk{k}(\mathcal{F}_{(\partial T)^i})$, used to penalise in a
least-squares sense the difference between the face unknown $\bv_{(\partial T)^i}$ and the
trace of the cell unknown $\bv_{T^i}$. Let $\bm{\Pi}_{(\partial T)^i}$ be the
$L^2$-orthogonal projection onto $\bPk{k}(\mathcal{F}_{(\partial T)^i})$. For any
$\bhv_T,\bhw_T\in\VT$ we set
\begin{equation}\label{eq:S}
{\bf S}(\bhv_{T^i}):=\bv_{(\partial T)^i}-\bm{\Pi}_{(\partial T)^i}(\bv_{T^i}),\qquad i=1,2,
\end{equation}
and define the two stabilisation bilinear forms
\begin{equation}\label{eq:stab}
\begin{aligned}
s_T^{\partial T}(\bhv_T,\bhw_T)&:=\sum_{i=1,2}\frac{2\mu_i+d\lambda_i}{h_T}
\big\langle{\bf S}(\bhv_{T^i}),{\bf S}(\bhw_{T^i})\big\rangle_{(\partial T)^i},\\
s_T^\Gamma(\bv_T,\bw_T)&:=\big\langle\bS_h\jump{\bv_T}_\Gamma,\jump{\bw_T}_\Gamma
\big\rangle_{T^\Gamma},
\end{aligned}
\end{equation}
and we set $s_T:=s_T^{\partial T}+s_T^\Gamma$. Finally, we mimic locally the exact bilinear
form $a(\cdot,\cdot)$ of \eqref{eq:a} by means of the local bilinear form on $\VT\times\VT$
\begin{multline}\label{eq:aT}
a_T(\bhv_T,\bhw_T):=\sum_{i=1,2}\Big(2\mu_i\big(\bE_{T^i}(\bhv_T),\bE_{T^i}(\bhw_T)\big)_{T^i}
+\lambda_i\big(D_{T^i}(\bhv_T),D_{T^i}(\bhw_T)\big)_{T^i}\Big)\\
+\big\langle(\bI-\bS_h\bK)\bK\,\bG_{T^1}(\bhv_T)\bn,\bG_{T^1}(\bhw_T)\bn\big\rangle_{T^\Gamma}.
\end{multline}
The bilinear form $a_T$ is symmetric and positive semi-definite, and testing \eqref{eq:E1}
and \eqref{eq:E2} with $\btau=\bG_{T^i}(\bhw_T)$ gives the computable representation
\begin{multline}\label{eq:aTcomp}
a_T(\bhv_T,\bhw_T)=\sum_{i=1,2}\Big[(\beps(\bv_{T^i}),\bG_{T^i}(\bhw_T))_{T^i}
+\big\langle\bv_{(\partial T)^i}-\bv_{T^i},\bG_{T^i}(\bhw_T)\bn_T
\big\rangle_{(\partial T)^i}\Big]\\
-\big\langle(\bI-\bS_h\bK)\jump{\bv_T}_\Gamma,\bG_{T^1}(\bhw_T)\bn\big\rangle_{T^\Gamma}.
\end{multline}

\begin{remark}[The two extreme regimes]\label{rem:regimes}
The definition \eqref{eq:Sh} is designed so that one single formulation covers the whole
range of compliancies. When $\alpha=\beta=0$, i.e.\ for a perfectly bonded interface, one has
$\bK= \mathsf{0}$, hence $\bI-\bS_h\bK=\bI$ and $\bS_h=\delta h_T^{-1}\bI$: the interface terms in
\eqref{eq:E1} then enforce weakly the continuity of the displacement, and $s_T^\Gamma$ is a
Nitsche-type penalty of size $\delta/h_T$. In the opposite regime
$\alpha,\beta\gg h_T/\delta$, one has $\bS_h\simeq\bK^{-1}$, so that $s_T^\Gamma$ represents
the physical interface stiffness, and $\bI-\bS_h\bK\simeq \mathsf{0}$, so that the interface
contributions to \eqref{eq:E1} and \eqref{eq:aT} disappear; in the limit
$\alpha,\beta\to+\infty$ the discrete problem decouples into two independent problems with a
traction-free boundary condition on $\Gamma$, which is the correct behaviour of
\eqref{eq:model} in that limit. Both limits, and the degenerate cases $\alpha=0$ or
$\beta=0$, are uniformly accessible; this is confirmed numerically in \Cref{ssec:robust}.
\end{remark}

\subsection{The space semi-discrete scheme}\label{ssec:semidiscrete}

For all $i=1,2$ we set
$\bm{V}_{\mathcal{T}^i}=\times_{T\in\Th}\bPk{k+1}(T^i)$ and
$\bm{V}_{\mathcal{F}^i}=\times_{F\in\Fh}\bPk{k}(F^i)$, and we define the global discrete
spaces
\[
\bm{V}_\mathcal{T}=\bm{V}_{\mathcal{T}^1}\times\bm{V}_{\mathcal{T}^2},\qquad
\bm{V}_\mathcal{F}=\bm{V}_{\mathcal{F}^1}\times\bm{V}_{\mathcal{F}^2},\qquad
\Vh=\bm{V}_\mathcal{T}\times\bm{V}_\mathcal{F},
\]
and we denote by $\Vhz$ the subspace of $\Vh$ where all the degrees of freedom attached to
the faces composing $\partial\Omega$ are null. Notice that the role of
$s_T^{\partial T}(\cdot,\cdot)$ is to enforce weakly the matching between the cell-based and
the face-based HHO unknowns on all the faces inside the subdomains, whereas the role of
combining $s_T^\Gamma(\cdot,\cdot)$ and the interface term in $a_T(\cdot,\cdot)$ is to
enforce weakly the first interface condition.

The space semi-discrete HHO scheme for the second-order formulation in time of the elastic
wave equation consists in finding
$\bhu_h=(\bu_\mathcal{T},\bu_\mathcal{F})\in C^2(\overline J;\Vhz)$ such that, for all
$t\in\overline J$,
\begin{equation}\label{eq:semi}
\big(\varrho\,\dtt\bu_\mathcal{T}(t),\bw_\mathcal{T}\big)_\Omega
+a_h\big(\bhu_h(t),\bhw_h\big)=\ell_h(\bhw_h)
\qquad\forall\,\bhw_h=(\bw_\mathcal{T},\bw_\mathcal{F})\in\Vhz,
\end{equation}
where
\begin{equation}\label{eq:ah}
a_h(\bhv_h,\bhw_h):=\sum_{T\in\Th}\big[a_T(\bhv_T,\bhw_T)+s_T(\bhv_T,\bhw_T)\big],
\qquad \ell_h(\bhw_h):=\sum_{T\in\Th}\ell_T(\bhw_T),
\end{equation}
and
\begin{multline}\label{eq:lT}
\ell_T(\bhw_T):=\sum_{i=1,2}(\bff(t),\bw_{T^i})_{T^i}
-\big\langle(\bI-\bS_h\bK)\bg_D,\bG_{T^1}(\bhw_T)\bn\big\rangle_{T^\Gamma}\\
+\big\langle\bS_h\bg_D,\jump{\bw_T}_\Gamma\big\rangle_{T^\Gamma}
+\langle\bg_N,\bw_{T^2}\rangle_{T^\Gamma}.
\end{multline}
The initial conditions for \eqref{eq:semi} only concern the cell component
$\bu_\mathcal{T}$ and are
\begin{equation}\label{eq:ic}
\bu_\mathcal{T}(0)|_{T^i}=\bm{I}_{T^i}(\bu_0|_{\Omega_i}),\qquad
\dt\bu_\mathcal{T}(0)|_{T^i}=\bm{I}_{T^i}(\bq_0|_{\Omega_i}),\qquad i=1,2,\ T\in\Th,
\end{equation}
where $\bm{I}_{T^i}$ is the cell interpolation operator defined in \eqref{eq:interp}, which
coincides with the $L^2$-orthogonal projection onto $\bPk{k+1}(T)$ on every uncut cell.
Using instead the $L^2$-orthogonal projection onto $\bPk{k+1}(T^i)$ on the cut cells, as we
do in the computations of \Cref{sec:num}, only adds a term of order $h^{k+2}$ to the
estimates below. The boundary condition is encoded in the fact that
$\bhu_h(t)\in\Vhz$ for any $t\in\overline J$. Notice that, since the space semi-discrete
solution is smooth in time, \eqref{eq:semi} holds at the initial time, which implies that
$\bu_\mathcal{F}(0)\in\bm{V}_\mathcal{F}$ is uniquely determined by the equations
$a_h\big((\bu_\mathcal{T}(0),\bu_\mathcal{F}(0)),(\bm{0},\bw_\mathcal{F})\big)=
\ell_h((\bm{0},\bw_\mathcal{F}))$ for all $\bw_\mathcal{F}\in\bm{V}_\mathcal{F}$ with
$\bu_\mathcal{T}(0)$ specified in \eqref{eq:ic} and with the boundary condition enforcing to
zero the components of $\bu_\mathcal{F}(0)$ attached to the boundary faces; the same holds
for $\dt\bu_\mathcal{F}(0)$ and $\dtt\bu_\mathcal{F}(0)$.

\section{Preliminaries}\label{sec:prelim}

In what follows, we use the convention $A\lesssim B$ to abbreviate the inequality
$A\le CB$ for positive real numbers $A$ and $B$, where the constant $C$ only depends on the
polynomial degree $k\ge1$, on the mesh parameters $\rho,\zeta,\gamma,n_0$ and on the
constants $c_{\rm disc}$ and $c_{\rm mtr}$ introduced below, but neither on the Lam\'e
coefficients $\lambda_i,\mu_i$, $i=1,2$, nor on the compliancy tensor $\bK$, nor on the mesh
size $h>0$, nor on how the interface cuts the mesh.

\subsection{The regularised interface stiffness}\label{ssec:Shprop}

We first collect the properties of the matrix $\bS_h$ defined in \eqref{eq:Sh}; they are the
counterpart of \cite[Lem.~2]{HansboHansbo2004}.

\begin{lemma}[Properties of $\bS_h$]\label{lem:Sh}
Let $\bS_h=(h_T\delta^{-1}\bI+\bK)^{-1}$ and let $|\cdot|$ denote the matrix norm induced by
the Euclidean norm on $\mathbb{R}^d$. Then $\bS_h$, $\bK$ and $\bS_h\bK$ are symmetric and
commute, $\bS_h\bK$ is positive semi-definite, and
\begin{align}
\bI-\bS_h\bK&=\tfrac{h_T}{\delta}\bS_h,\label{eq:ImSK}\\
|\bS_h|&\le\tfrac{\delta}{h_T},\label{eq:Shbound}\\
|\bI-\bS_h\bK|\le1,&\qquad |\bS_h\bK|\le1,\label{eq:unitbound}\\
|(\bI-\bS_h\bK)\bK|&\le\tfrac{h_T}{\delta}.\label{eq:ImSKK}
\end{align}
\end{lemma}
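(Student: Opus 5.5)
The plan is to diagonalise everything simultaneously. Since $\bK$ is symmetric positive semi-definite, there is an orthogonal matrix $Q$ with $\bK=Q\Lambda Q^{\mathsf T}$, $\Lambda=\mathrm{diag}(k_1,\dots,k_d)$, $k_j\ge0$. For the isotropic case $\bK=\alpha\bI+(\beta-\alpha)\bn\otimes\bn$ we can be explicit: the eigenvalues are $\beta$ (eigenvector $\bn$) and $\alpha$ (tangential directions), though the argument only needs semi-definiteness. Set $\eta:=h_T/\delta>0$. Then
\[
\eta\bI+\bK=Q(\eta\bI+\Lambda)Q^{\mathsf T}
\]
is symmetric with eigenvalues $\eta+k_j\ge\eta>0$. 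Hence $\bS_h=Q\,\mathrm{diag}\big((\eta+k_j)^{-1}\big)Q^{\mathsf T}$ is well defined and symmetric positive definite. Because $\bS_h$, $\bK$ and $\bS_h\bK=Q\,\mathrm{diag}\big(k_j/(\eta+k_j)\big)Q^{\mathsf T}$ are all diagonal in the same basis, they are symmetric and commute. The eigenvalues $k_j/(\eta+k_j)\in[0,1)$ show that $\bS_h\bK$ is positive semi-definite.

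The identity \eqref{eq:ImSK} is purely algebraic. Multiply $\bS_h(\eta\bI+\bK)=\bI$ out to get $\eta\bS_h+\bS_h\bK=\bI$, that is, $\bI-\bS_h\bK=\eta\bS_h$. In eigenvalues this reads $1-k_j/(\eta+k_j)=\eta/(\eta+k_j)$.

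The norm bounds then follow from the fact that, for symmetric matrices, the induced Euclidean norm is the largest absolute eigenvalue:
\begin{itemize}
\item $|\bS_h|=\max_j(\eta+k_j)^{-1}\le\eta^{-1}=\delta/h_T$, which gives \eqref{eq:Shbound};
\item $|\bS_h\bK|=\max_j k_j/(\eta+k_j)\le1$ and $|\bI-\bS_h\bK|=\max_j\eta/(\eta+k_j)\le1$, which gives \eqref{eq:unitbound};
\item using \eqref{eq:ImSK}, $(\bI-\bS_h\bK)\bK=\eta\bS_h\bK$, so $|(\bI-\bS_h\bK)\bK|\le\eta\,|\bS_h\bK|\le\eta=h_T/\delta$, which gives \eqref{eq:ImSKK}.
\end{itemize}
This last identity also shows that $(\bI-\bS_h\bK)\bK$ is symmetric positive semi-definite, which is the fact used in \Cref{lem:wp}.

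There is no real obstacle. The only point needing care is that $\bK$ may be singular or have very large entries. The spectral argument handles every value of $\alpha,\beta\ge0$ uniformly, since all eigenvalue expressions are bounded independently of the $k_j$. The one subtlety is that $\bS_h$ is defined cellwise through $h_T$, and $\bn$ varies along $\Gamma$; all statements are pointwise on $T^\Gamma$, so the argument applies at each point.
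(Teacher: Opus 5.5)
Your proof is correct and is essentially the paper's argument. Both diagonalise $\bK$ in an orthonormal eigenbasis, so that $\bS_h$, $\bK$ and $\bS_h\bK$ are simultaneously diagonal. Both obtain \eqref{eq:ImSK} by expanding $\bS_h(\tfrac{h_T}{\delta}\bI+\bK)=\bI$, and both read off the norm bounds from the eigenvalues $(\eta+k_j)^{-1}$, $k_j/(\eta+k_j)$ and $\eta k_j/(\eta+k_j)$.
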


\begin{proof}
Since $\bK$ is symmetric positive semi-definite, it is diagonalisable in an orthonormal basis
with nonnegative eigenvalues $\{k_j\}_{j=1}^d$ (for the isotropic compliancy considered here,
$k_1=\beta$ in the normal direction and $k_j=\alpha$, $j\ge2$, in the tangential ones). In
that basis, $\bS_h$ is diagonal with entries $(h_T/\delta+k_j)^{-1}$, whence the symmetry,
the commutation and \eqref{eq:Shbound}. Multiplying $\bS_h(h_T\delta^{-1}\bI+\bK)=\bI$ by
$\bS_h^{-1}$ on the left gives \eqref{eq:ImSK}. The eigenvalues of $\bS_h\bK$ are
$k_j/(h_T/\delta+k_j)\in[0,1)$, which gives \eqref{eq:unitbound}, and those of
$(\bI-\bS_h\bK)\bK=\tfrac{h_T}{\delta}\bS_h\bK$ are
$\tfrac{h_T}{\delta}k_j/(h_T/\delta+k_j)\in[0,h_T/\delta)$, which gives \eqref{eq:ImSKK}.
\end{proof}

\subsection{Discrete and trace inequalities}\label{ssec:ineq}

The role of \Cref{ass:cut} is to provide the following discrete inequalities on the cut
subcells; see \cite[Lem.~3.4]{BCDE2021} and \cite[Lem.~3.4]{BurmanErn2018}. Let
$RM:=\{\bm{r}\in\bPk{1}(\mathbb{R}^d)\mid\beps(\bm{r})=\bm{0}\}$ denote the space of
rigid-body motions.

\begin{lemma}[Discrete inequalities]\label{lem:disc}
Let \Cref{ass:cut} be fulfilled and let $l\in\mathbb{N}$. There is $c_{\rm disc}>0$,
depending on $\rho,\zeta$ and $l$, such that for any $T\in\Th$, all $i=1,2$ and any
$v_{T^i}\in\Pk{l}(T^i)$, the following inequalities hold true:
\begin{itemize}[leftmargin=1.2em,itemsep=1pt]
\item \emph{(discrete trace inequality)}
$\|v_{T^i}\|_{(\partial T)^i\cup T^\Gamma}\le c_{\rm disc}h_T^{-1/2}\|v_{T^i}\|_{T^i}$;
\item \emph{(discrete inverse inequality)}
$\|\nabla v_{T^i}\|_{T^i}\le c_{\rm disc}h_T^{-1}\|v_{T^i}\|_{T^i}$;
\item \emph{(discrete Poincar\'e inequality)} if $(v_{T^i},1)_{B(\check\bx_{T^i},h_T)}=0$,
then $\|v_{T^i}\|_{T^i}\le c_{\rm disc}h_T\|\nabla v_{T^i}\|_{T^i}$;
\item \emph{(discrete Korn inequality)} for $\bv_{T^i}\in\bPk{l}(T^i)$,
$\|\nabla\bv_{T^i}\|_{T^i}\le c_{\rm disc}\|\beps(\bv_{T^i})\|_{T^i}$ whenever
$(\bv_{T^i},\bm{r})_{B(\check\bx_{T^i},h_T)}=0$ for all $\bm{r}\in RM$.
\end{itemize}
\end{lemma}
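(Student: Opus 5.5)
The plan is the standard ``fat-ball'' argument behind \cite[Lem.~3.4]{BurmanErn2018}: reduce each inequality to a fixed reference configuration and use equivalence of norms on the finite-dimensional space $\Pk{l}$. We fix $T\in\Th$ and $i\in\{1,2\}$; for uncut cells the claims are classical consequences of the simplicial submesh $\Th'$ and its regularity parameter $\rho$, so we concentrate on cut subcells. Two geometric facts are used throughout. By \Cref{ass:cut}, the ball $B_\flat:=B(\check\bx_{T^i},\zeta h_T)$ lies in $T^i$. Since $h_T$ is the diameter of $T$, the subcell $T^i$ lies in $B_\sharp:=B(\check\bx_{T^i},h_T)$. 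After the affine rescaling $\bx\mapsto(\bx-\check\bx_{T^i})/h_T$ these become two concentric balls of radii $\zeta$ and $1$, independent of $T$ and of the cut.

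The key lemma is a polynomial norm equivalence. Since norms on $\Pk{l}(\mathbb{R}^d)$ are equivalent,
\[
\|p\|_{L^\infty(B_\sharp)}\le C(l,\zeta,d)\,|B_\flat|^{-1/2}\|p\|_{L^2(B_\flat)}\le C\,|B_\flat|^{-1/2}\|p\|_{T^i}
\qquad\forall p\in\Pk{l}.
\]
Here a polynomial on $T^i$ is identified with its extension to $\mathbb{R}^d$. Combined with $|T^i|\le|B_\sharp|\lesssim|B_\flat|$, this also gives $\|p\|_{B_\sharp}\lesssim\|p\|_{T^i}$.

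With this in hand, each inequality follows.
\begin{itemize}[leftmargin=1.2em,itemsep=1pt]
\item \emph{Trace inequality.} Bound $\|v\|_{(\partial T)^i\cup T^\Gamma}^2\le|(\partial T)^i\cup T^\Gamma|\,\|v\|_{L^\infty(B_\sharp)}^2$. The measure of the faces of $T$ is $\lesssim h_T^{d-1}$ by mesh regularity. The measure of $T^\Gamma$ is $\lesssim h_T^{d-1}$ because $\Gamma$ is $C^2$ and resolved: \Cref{ass:resolve}(ii) makes $T^\Gamma$ star-shaped with respect to $\check\bx_T$ inside $T^\dagger$ (projecting radially onto a sphere of radius $\gamma^{-1}h_T$ has bounded Jacobian). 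Since $|B_\flat|\sim h_T^d$, this yields the factor $h_T^{-1/2}$.
\item \emph{Inverse inequality.} Apply the norm equivalence to $\nabla v\in\bPk{l-1}$ on $B_\sharp$ together with the classical inverse inequality on the ball $B_\sharp$: $\|\nabla v\|_{T^i}\le\|\nabla v\|_{B_\sharp}\lesssim h_T^{-1}\|v\|_{B_\sharp}\lesssim h_T^{-1}\|v\|_{T^i}$.
\item \emph{Poincar\'e inequality.} Use the ball Poincar\'e inequality on $B_\sharp$ with the stated zero mean, then $\|v\|_{T^i}\le\|v\|_{B_\sharp}\lesssim h_T\|\nabla v\|_{B_\sharp}\lesssim h_T\|\nabla v\|_{T^i}$, the last step by the equivalence applied to $\nabla v$.
\item \emph{Korn inequality.} Follow the same route: use Korn's inequality on $B_\sharp$ for fields $L^2(B_\sharp)$-orthogonal to $RM$, obtaining $\|\nabla\bv\|_{T^i}\le\|\nabla\bv\|_{B_\sharp}\lesssim\|\beps(\bv)\|_{B_\sharp}\lesssim\|\beps(\bv)\|_{T^i}$. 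The last step is again the equivalence, applied to $\beps(\bv)$. Ball constants are scale invariant, so they depend only on $d$.
\end{itemize}

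The main obstacle is the trace inequality on the curved piece $T^\Gamma$, i.e.\ proving $|T^\Gamma|\lesssim h_T^{d-1}$ uniformly in the cut. We would rely on \Cref{ass:resolve} as in \cite{BurmanErn2018}, although the statement only lists $\rho,\zeta,l$. Alternatively, the estimate $|\Gamma\cap B(\bx,r)|\lesssim r^{d-1}$ for $r\lesssim$ the curvature radius gives the bound directly for $h$ small enough.
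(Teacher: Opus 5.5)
Your argument is correct and is the standard fat-ball/polynomial-norm-equivalence proof on which the paper relies; the paper does not reproduce it and instead defers to \cite[Lem.~3.4]{BCDE2021} and \cite[Lem.~3.4]{BurmanErn2018}. You are right that the trace bound on $T^\Gamma$ needs $|T^\Gamma|\lesssim h_T^{d-1}$, which \Cref{ass:cut} alone does not give. The star-shapedness heuristic needs more than the pointwise transversality of \Cref{ass:resolve}(ii); the quickest justification is \Cref{lem:mtr} with $v\equiv1$, which gives $|T^\Gamma|^{1/2}\le c_{\rm mtr}h_T^{-1/2}|T^\dagger|^{1/2}\lesssim h_T^{(d-1)/2}$, so $c_{\rm disc}$ also depends on $\gamma$, a dependence the statement omits.
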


The role of \Cref{ass:resolve} is to provide a multiplicative trace inequality; see
\cite[Lem.~3.3]{BurmanErn2018} for the proof.

\begin{lemma}[Multiplicative trace inequality]\label{lem:mtr}
Let \Cref{ass:resolve} be fulfilled. There is $c_{\rm mtr}>0$, depending on $\rho$ and
$\gamma$, such that for any $T\in\Th$, any $v\in H^1(T^\dagger)$ and all $i=1,2$,
\[
\|v\|_{(\partial T)^i\cup T^\Gamma}\le c_{\rm mtr}\Big(h_T^{-1/2}\|v\|_{T^\dagger}
+\|v\|_{T^\dagger}^{1/2}\|\nabla v\|_{T^\dagger}^{1/2}\Big).
\]
\end{lemma}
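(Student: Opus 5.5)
The multiplicative trace inequality of \Cref{lem:mtr} is proved in \cite[Lem.~3.3]{BurmanErn2018}. I would reproduce that argument: estimate the trace on each piece of $(\partial T)^i\cup T^\Gamma$ by a divergence-theorem identity on a cone-like region attached to the point $\check\bx_T$ of \Cref{ass:resolve}. By density it suffices to take $v\in C^1(\overline{T^\dagger})$. Each face $F\in\mathcal{F}_{\partial T}$ is handled with the point $\bx_F$, and the interface piece $T^\Gamma$ with $\check\bx_T$. The two estimates are then summed. Since $(\partial T)^i$ is a subset of $\partial T$, the same bound holds for each $i$.

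\textbf{Key identity.} For a point $\bm{z}$ and a relatively open piece $S$ of a hypersurface, let $C_S$ be the cone (union of segments) from $\bm{z}$ to $S$, and set $\bm{m}(\bx)=\bx-\bm{z}$. Then $\nabla\!\cdot\!\bm{m}=d$. On the lateral part of $\partial C_S$ we have $\bm{m}\cdot\bn=0$, and on $S$ we have $\bm{m}\cdot\bn=d(\bm{z},T_{\bm{s}}S)$. Applying the divergence theorem to $v^2\bm{m}$ gives
\[
\int_S v^2\,\bm{m}\cdot\bn=\int_{C_S}\big(d\,v^2+2v\nabla v\cdot\bm{m}\big).
\]

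\textbf{Interface piece.} Take $\bm{z}=\check\bx_T$ and $S=T^\Gamma$. By \Cref{ass:resolve}(ii), $\bm{m}\cdot\bn\ge\gamma h_T$ on $S$, and $|\bm{m}|\le2\gamma^{-1}h_T$ inside $T^\dagger$. The condition $d(\check\bx_T,T_{\bm{s}}\Gamma)\ge\gamma h_T>0$ also makes the cone well defined: each ray from $\check\bx_T$ meets $\Gamma\cap T$ only once, so the cone does not overlap itself.

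\textbf{Face pieces.} For each $F$, take $\bm{z}=\bx_F$. Since $F$ is planar, $\bm{m}\cdot\bn_F=d(\bx_F,\mathrm{plane}(F))$, which is at least $d(\bx_F,F)\ge\gamma h_T$ by \Cref{ass:resolve}(iii).

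\textbf{Containment.} Both cones lie in $T^\dagger$ because $T^\dagger$ is a ball, hence convex, containing the apex and $T$.

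\textbf{Conclusion.} From the identity,
\[
\gamma h_T\|v\|_S^2\le d\|v\|_{T^\dagger}^2+4\gamma^{-1}h_T\|v\|_{T^\dagger}\|\nabla v\|_{T^\dagger}.
\]
Dividing by $\gamma h_T$ and taking square roots gives the stated bound. The number of faces is bounded in terms of $\rho$, so summing over faces only changes the constant.

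\textbf{Main obstacle.} The delicate step is justifying that $\bm{m}\cdot\bn\ge\gamma h_T$ on the curved surface $T^\Gamma$ and that the cone construction is legitimate there. One needs $\Gamma\cap T^\dagger$ to be star-shaped with respect to $\check\bx_T$ in the appropriate sense. This follows from (ii), which holds at every point, combined with the $C^2$ regularity of $\Gamma$. I would cite \cite{BurmanErn2018} for this geometric fact rather than re-derive it.
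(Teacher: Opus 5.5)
The paper does not prove this lemma itself; it only cites \cite[Lem.~3.3]{BurmanErn2018}. Your cone/divergence argument with $\bm{m}=\bx-\bm{z}$ is the right mechanism, but two of its steps do not hold as written.

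\textbf{Faces.} The inequality $d(\bx_F,\mathrm{plane}(F))\ge d(\bx_F,F)$ is reversed. Since $F\subset\mathrm{plane}(F)$, one always has $d(\bx_F,\mathrm{plane}(F))\le d(\bx_F,F)$. So \Cref{ass:resolve}(iii), as stated, gives no lower bound on $\bm{m}\cdot\bn_F$: the point $\bx_F$ may lie in the plane of $F$ at distance $\gamma h_T$ from $F$, and then $\bm{m}\cdot\bn_F\equiv0$ and your identity yields nothing. You could read (iii) as a condition on the supporting hyperplane of $F$. Otherwise, the natural repair uses the simplicial submesh, which is also where the dependence of $c_{\rm mtr}$ on $\rho$ comes from:
\begin{itemize}
\item bound $\|v\|_{(\partial T)^i}\le\|v\|_{\partial T}$ and use the classical multiplicative trace inequality on the polytopal cell;
\item apply your identity on each simplex $S\in\Th'$ with $S\subset T$, taking as apex its incentre, which lies at distance $r_S\ge\rho h_S\ge\rho^2h_T$ from every face hyperplane of $S$; this gives $r_S\|v\|_{\partial S}^2\le d\|v\|_S^2+2h_S\|v\|_S\|\nabla v\|_S$;
\item sum over the $S\subset T$ (whose faces cover $\partial T$), then use Cauchy--Schwarz and $T\subset T^\dagger$.
\end{itemize}

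\textbf{Interface.} With $\bm{m}=\bx-\check\bx_T$, condition (ii) is only a transversality condition: $|\bm{m}\cdot\bn_\Gamma|\ge\gamma h_T$ at each point of $T^\Gamma$. It does not imply that a ray from $\check\bx_T$ meets $T^\Gamma$ only once. Nor does it imply that $\bm{m}\cdot\bn_\Gamma$ keeps one sign when $T^\Gamma$ has several components, and qualitative $C^2$ regularity adds nothing uniform in $h_T$.

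In fact the estimate cannot follow from (ii) alone. Let $\Gamma$ cross $T$ as $N$ arcs of circles centred at a point $\check\bx_T$ at distance $\sim h_T$ from $T$, joined by U-turns outside $T$. Then (i)--(iii) hold with $\gamma$ independent of $N$. Yet $v\equiv1$ gives $\|v\|_{T^\Gamma}^2=|T^\Gamma|\gtrsim Nh_T$, against $c_{\rm mtr}^2\pi\gamma^{-2}h_T$ for the squared right-hand side.

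So you must add, as a hypothesis, a uniform bound $N_\Gamma$ on the number of points of $T^\Gamma$ on any ray from $\check\bx_T$. It holds with $N_\Gamma=1$ when $\Gamma\cap T^\dagger$ is a single nearly flat sheet, i.e.\ for $h$ small with respect to the geometry of $\Gamma$; this is the regime in which the paper asserts \Cref{ass:resolve}.

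Given $N_\Gamma$, the cleanest rigorous version of your argument works along rays:
\begin{itemize}
\item for $\bm{s}\in T^\Gamma$, $v(\bm{s})^2=\int_0^1t^{d-1}\big(dv^2+2v\nabla v\cdot\bm{m}\big)\big(\check\bx_T+t(\bm{s}-\check\bx_T)\big)\,dt$;
\item the map $(\bm{s},t)\mapsto\check\bx_T+t(\bm{s}-\check\bx_T)$ has Jacobian $t^{d-1}|\bm{m}(\bm{s})\cdot\bn_\Gamma(\bm{s})|$ and takes values in the convex set $T^\dagger$;
\item the area formula then gives $\gamma h_T\|v\|_{T^\Gamma}^2\le N_\Gamma\big(d\|v\|_{T^\dagger}^2+2\gamma^{-1}h_T\|v\|_{T^\dagger}\|\nabla v\|_{T^\dagger}\big)$.
\end{itemize}
This version needs neither a non-overlapping cone nor a sign condition.
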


We shall also use repeatedly the following elementary inequality, which compares the
elasticity tensor with its square: since $(\tr\btau)^2\le d|\btau|^2$ for every symmetric
tensor $\btau$, one checks directly that
\begin{equation}\label{eq:Csquare}
|\CC_i\btau|^2\le(2\mu_i+d\lambda_i)\,(\CC_i\btau,\btau),\qquad
(\CC_i\btau,\btau)=2\mu_i|\btau|^2+\lambda_i(\tr\btau)^2 .
\end{equation}
We write $\|\CC_i^{1/2}\btau\|_S^2:=(\CC_i\btau,\btau)_S$ for brevity.

\subsection{Local energy norm and stability of the local operators}\label{ssec:norm}

For any $T\in\Th$ and $\bhv_T\in\VT$, we define the local discrete energy norm
\begin{multline}\label{eq:tnorm}
\tnorm{\bhv_T}^2_{a_T}:=\sum_{i=1,2}\Big(2\mu_i\|\beps(\bv_{T^i})\|_{T^i}^2
+\lambda_i\|\nabla\!\cdot\!\bv_{T^i}\|_{T^i}^2
+\frac{2\mu_i}{h_T}\|\bv_{(\partial T)^i}-\bv_{T^i}\|_{(\partial T)^i}^2\\
+\frac{d\lambda_i}{h_T}\|{\bf S}(\bhv_{T^i})\|_{(\partial T)^i}^2\Big)
+\big\|\bS_h^{1/2}\jump{\bv_T}_\Gamma\big\|_{T^\Gamma}^2,
\end{multline}
and we set $\tnorm{\bhv_h}_h^2:=\sum_{T\in\Th}\tnorm{\bhv_T}^2_{a_T}$ for $\bhv_h\in\Vh$.

\begin{remark}[On the norm \eqref{eq:tnorm}]\label{rem:norm}
The $\mu$-weighted face contribution involves the full difference
$\bv_{(\partial T)^i}-\bv_{T^i}$, whereas the $\lambda$-weighted one involves the
stabilisation quantity ${\bf S}(\bhv_{T^i})=\bv_{(\partial T)^i}-\bm{\Pi}_{(\partial T)^i}\bv_{T^i}$
only. This distinction, which is the one made in the fitted case \cite{HuangBurman2024}, is
what makes the constants in \Cref{lem:locstab} and in \Cref{thm:stab} independent of the
Lam\'e coefficients; see also \Cref{rem:locking}. Since $\bm{\Pi}_{(\partial T)^i}$ is an
$L^2$-orthogonal projection and $\bv_{(\partial T)^i}$ is a polynomial of degree $k$ on each
subface, one has $\|\bS(\bhv_{T^i})\|_{(\partial T)^i}\le
\|\bv_{(\partial T)^i}-\bv_{T^i}\|_{(\partial T)^i}$, and therefore
\begin{equation}\label{eq:stabnorm}
s_T^{\partial T}(\bhv_T,\bhv_T)\le\tnorm{\bhv_T}_{a_T}^2 .
\end{equation}
\end{remark}

The following elementary estimate will be used twice; it expresses that the distance between
a cell polynomial and the $L^2$-projection of its trace onto the face polynomial spaces is
controlled by its symmetric gradient alone, with a constant that does \emph{not} involve
$\lambda$. The proof is that of \cite[Lem.~4.3]{HuangBurman2024}, the inequalities of
\Cref{lem:disc} being used in place of their fitted counterparts.

\begin{lemma}[Projected trace]\label{lem:proj}
Let $k\ge1$. For any $T\in\Th$, all $i=1,2$ and any $\bv_{T^i}\in\bPk{k+1}(T^i)$,
\begin{equation}\label{eq:proj}
h_T^{-1}\big\|\bv_{T^i}-\bm{\Pi}_{(\partial T)^i}\bv_{T^i}\big\|_{(\partial T)^i}^2
\lesssim\|\beps(\bv_{T^i})\|_{T^i}^2 .
\end{equation}
\end{lemma}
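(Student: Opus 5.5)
The key observation is that the left-hand side of \eqref{eq:proj} is invariant under adding a rigid-body motion to $\bv_{T^i}$. Indeed, if $\bm{r}\in RM$, then $\bm{r}$ is affine. Since $k\ge1$, its trace on each subface $F^i$ lies in $\bPk{k}(F^i)$, so $\bm{\Pi}_{(\partial T)^i}\bm{r}=\bm{r}$ on $(\partial T)^i$. Moreover $\beps(\bm{r})=\bm{0}$, so the right-hand side is invariant as well.

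I would therefore let $\bm{r}\in RM$ be the $L^2(B(\check\bx_{T^i},h_T))$-orthogonal projection of $\bv_{T^i}$ onto $RM$, set $\bw:=\bv_{T^i}-\bm{r}\in\bPk{k+1}(T^i)$, and prove the bound for $\bw$. For uncut cells the ball of \Cref{ass:cut} may be replaced by any ball of radius comparable to $h_T$ inside $T$ given by mesh regularity, and the fitted argument applies verbatim.

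The first step uses the $L^2$-stability of the projection on each subface, $\|(I-\bm{\Pi}_{(\partial T)^i})\bw\|_{F^i}\le\|\bw\|_{F^i}$. Combined with the discrete trace inequality of \Cref{lem:disc}, this gives
\[
h_T^{-1}\|\bw-\bm{\Pi}_{(\partial T)^i}\bw\|_{(\partial T)^i}^2\le h_T^{-1}\|\bw\|_{(\partial T)^i}^2\le c_{\rm disc}^2h_T^{-2}\|\bw\|_{T^i}^2 .
\]

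The second step bounds $\|\bw\|_{T^i}$ by $h_T\|\nabla\bw\|_{T^i}$ with the discrete Poincaré inequality. This requires the mean-value condition $(\bw,1)_{B(\check\bx_{T^i},h_T)}=0$ componentwise, which holds because constants belong to $RM$ and $\bw$ is orthogonal to $RM$ on that ball. The third step applies the discrete Korn inequality, whose orthogonality hypothesis is exactly the one built into $\bw$, to get $\|\nabla\bw\|_{T^i}\le c_{\rm disc}\|\beps(\bw)\|_{T^i}=c_{\rm disc}\|\beps(\bv_{T^i})\|_{T^i}$. Chaining the three estimates yields \eqref{eq:proj}, with a constant depending only on $c_{\rm disc}$, and hence on $\rho,\zeta,k$. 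In particular it does not depend on the Lamé parameters.

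The main obstacle is the bookkeeping with the reference ball. The orthogonality conditions in \Cref{lem:disc} are imposed on $B(\check\bx_{T^i},h_T)$, which is not contained in $T^i$, while the norms are taken over $T^i$; polynomials are defined on all of $\mathbb{R}^d$, so this is meaningful. One must therefore choose the rigid-body correction with respect to that ball rather than $T^i$, and then check that a single correction $\bm{r}$ serves both the Poincaré and the Korn steps. It does, because $RM$ contains the constants, so orthogonality to $RM$ implies the zero-mean condition. If instead the lemma's hypotheses were stated on $T^i$, one would use polynomial norm equivalence between $T^i$ and the ball, guaranteed by \Cref{ass:cut}, to transfer them.
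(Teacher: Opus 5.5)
Your proof is correct and follows the paper's argument: subtract a rigid-body projection, which leaves both sides unchanged because $k\ge1$, then chain the stability of $\bm{\Pi}_{(\partial T)^i}$ with the discrete trace, Poincar\'e and Korn inequalities of \Cref{lem:disc}. The only difference is that you project onto $RM$ in $L^2(B(\check\bx_{T^i},h_T))$ instead of $L^2(T^i)$, and this matches the orthogonality hypotheses of \Cref{lem:disc} as stated more exactly than the paper's own choice does.
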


\begin{proof}
Let $\bm{\Pi}_{RM}$ denote the $L^2(T^i)$-orthogonal projection onto $RM$ and set
$\bm\psi:=\bv_{T^i}-\bm{\Pi}_{RM}(\bv_{T^i})$. Since $RM\subset\bPk{1}(T^i)$ and $k\ge1$, the
trace on any subface $F^i$ of a rigid-body motion belongs to $\bPk{k}(F^i)$, so that
$\bm{\Pi}_{(\partial T)^i}\bm{\Pi}_{RM}\bv_{T^i}=\bm{\Pi}_{RM}\bv_{T^i}$ and the left-hand
side of \eqref{eq:proj} is unchanged if $\bv_{T^i}$ is replaced by $\bm\psi$. Using that
$\bm{\Pi}_{(\partial T)^i}$ is an $L^2$-orthogonal projection, then the discrete trace, the
discrete Poincar\'e and the discrete Korn inequalities of \Cref{lem:disc}, which are licit
because $\bm{\psi}$ is orthogonal to $RM$ (hence to the constants) in $L^2(T^i)$, we obtain
\[
h_T^{-1}\|(\bm{I}-\bm{\Pi}_{(\partial T)^i})\bm\psi\|_{(\partial T)^i}^2
\le h_T^{-1}\|\bm\psi\|_{(\partial T)^i}^2
\lesssim h_T^{-2}\|\bm\psi\|_{T^i}^2\lesssim\|\nabla\bm\psi\|_{T^i}^2
\lesssim\|\beps(\bm\psi)\|_{T^i}^2=\|\beps(\bv_{T^i})\|_{T^i}^2,
\]
where the last equality holds because every element of $RM$ has a vanishing symmetric
gradient.
\end{proof}

\begin{lemma}[Local stability and boundedness]\label{lem:locstab}
There are $0<\alpha_\flat\le\alpha_\sharp<+\infty$ such that, for any $T\in\Th$ and any
$\bhv_T\in\VT$,
\begin{equation}\label{eq:locstab}
\alpha_\flat\tnorm{\bhv_T}_{a_T}^2\le\mathcal{N}_T(\bhv_T)\le\alpha_\sharp\tnorm{\bhv_T}_{a_T}^2,
\end{equation}
where
\begin{equation}\label{eq:NT}
\mathcal{N}_T(\bhv_T):=\sum_{i=1,2}\Big(2\mu_i\|\bE_{T^i}(\bhv_T)\|_{T^i}^2
+\lambda_i\|D_{T^i}(\bhv_T)\|_{T^i}^2
+\tfrac{2\mu_i+d\lambda_i}{h_T}\|{\bf S}(\bhv_{T^i})\|_{(\partial T)^i}^2\Big)
+s_T^\Gamma(\bv_T,\bv_T).
\end{equation}
\end{lemma}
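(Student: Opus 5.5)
We prove the two inequalities separately, working side by side ($i=1,2$) and keeping the interface terms apart. The terms $\frac{2\mu_i+d\lambda_i}{h_T}\|{\bf S}(\bhv_{T^i})\|^2_{(\partial T)^i}$ and $s_T^\Gamma$ appear in both $\mathcal N_T$ and $\tnorm{\cdot}_{a_T}$, up to splitting $2\mu_i+d\lambda_i$. So the real work is to compare $2\mu_i\|\bE_{T^i}\|^2+\lambda_i\|D_{T^i}\|^2$ with $2\mu_i\|\beps(\bv_{T^i})\|^2+\lambda_i\|\nabla\!\cdot\!\bv_{T^i}\|^2+\frac{2\mu_i}{h_T}\|\bv_{(\partial T)^i}-\bv_{T^i}\|^2$. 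Throughout we use the notation $\|\CC_i^{1/2}\btau\|^2=2\mu_i\|\btau\|^2+\lambda_i\|\tr\btau\|^2$.

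\textbf{Upper bound.} We test \eqref{eq:E1} with $\btau=\CC_1\bE_{T^1}$, and \eqref{eq:E2} with $\btau=\CC_2\bE_{T^2}$. On side $2$ this gives $\|\CC_2^{1/2}\bE_{T^2}\|^2=(\CC_2\beps(\bv_{T^2}),\bE_{T^2})+\langle\bv_{(\partial T)^2}-\bv_{T^2},\CC_2\bE_{T^2}\bn_T\rangle$. The volume term is bounded by Cauchy--Schwarz in the $\CC_2$-inner product. For the face term we use \eqref{eq:Csquare} together with the discrete trace inequality, which gives $\|\CC_2\bE\bn\|_{(\partial T)^2}\lesssim h_T^{-1/2}(2\mu_2+d\lambda_2)^{1/2}\|\CC_2^{1/2}\bE\|$. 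The factor $\lambda$ here must not multiply the full difference $\bv_{(\partial T)}-\bv_T$; this is the delicate point. We handle it by splitting the $\lambda$-part of $\CC\bE\bn$, namely $\lambda D\bn$, which has degree $k$ on faces. Hence $\langle\bv_{(\partial T)}-\bv_T,\lambda D\bn\rangle=\langle{\bf S}(\bhv)+(\bm\Pi-I)\bv_T,\lambda D\bn\rangle$, and the projected part is bounded through \Cref{lem:proj} by $\|\beps(\bv_T)\|$. This yields $\lambda_i^{1/2}$-weighted bounds involving only ${\bf S}$ and $\beps$, which explains the design of \eqref{eq:tnorm}. On side $1$ the same test gives the extra nonnegative term $\langle(\bI-\bS_h\bK)\bK\bG\bn,\bG\bn\rangle$ on the left-hand side, which can be dropped. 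On the right-hand side we have $-\langle(\bI-\bS_h\bK)\jump{\bv_T}_\Gamma,\bG\bn\rangle$, which we write, using \eqref{eq:ImSK}, as $-\frac{h_T}{\delta}\langle\bS_h^{1/2}\jump{\bv_T},\bS_h^{1/2}\bG\bn\rangle$. Since $|\bS_h|\le\delta/h_T$, this is bounded by $(h_T/\delta)^{1/2}\|\bS_h^{1/2}\jump{\bv_T}\|\,\|\bG\bn\|_{T^\Gamma}$. The discrete trace inequality and \eqref{eq:Csquare} with $\delta=2\mu_1+d\lambda_1$ then give $\lesssim\|\bS_h^{1/2}\jump{\bv_T}\|\,\|\CC_1^{1/2}\bE_{T^1}\|$. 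Young's inequality finishes the upper bound.

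\textbf{Lower bound.} This is the main obstacle, namely bounding $\|\CC_i^{1/2}\beps(\bv_{T^i})\|$ and $\frac{2\mu_i}{h_T}\|\bv_{(\partial T)^i}-\bv_{T^i}\|^2$ by $\mathcal N_T$.

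First we treat the volume term. We test \eqref{eq:E1}--\eqref{eq:E2} with $\btau=\CC_i\beps(\bv_{T^i})\in\sPk{k}(T^i)$ and obtain $\|\CC_i^{1/2}\beps(\bv_{T^i})\|^2=(\CC_i\bE,\beps)-\langle\bv_{(\partial T)}-\bv_T,\CC_i\beps\bn_T\rangle+$ interface terms. The face term is split again, writing $\bv_{(\partial T)}-\bv_T={\bf S}(\bhv)+(\bm\Pi-I)\bv_T$. For the $\lambda$-part $\lambda\nabla\!\cdot\!\bv\,\bn$, which is of degree $k$ on faces, the projected piece vanishes by orthogonality. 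For the $2\mu$-part we cannot discard $(\bm\Pi-I)\bv_T$; we bound it by \Cref{lem:proj}, but that only returns $\sqrt{2\mu}\|\beps\|$ with an $O(1)$ constant, not a small one. To close, we will instead argue with $\bm\psi=\bv_{T^i}-\bm\Pi_{RM}\bv_{T^i}$. The fallback, if a small constant is needed, is the standard HHO argument: a finite-dimensional/scaling argument on the space of $\bv_{T^i}$ modulo $RM$ giving $\|\beps(\bv_T)\|^2\lesssim\|\bE\|^2+h_T^{-1}\|{\bf S}\|^2$ for the $\mu$-part. For the $\lambda$-part we use $\|\nabla\!\cdot\!\bv_T\|\le\|D_{T^i}\|+$ a ${\bf S}$-term, obtained from \eqref{eq:D1} with $q=\nabla\!\cdot\!\bv_T$ and the orthogonality just noted. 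The interface terms on side $1$ are handled as in the upper bound, using \eqref{eq:ImSKK}, \eqref{eq:Csquare} and the discrete trace inequality; they contribute $\|\bS_h^{1/2}\jump{\bv}\|\,\|\CC_1^{1/2}\beps\|$ and $\|\CC_1^{1/2}\bE\|\,\|\CC_1^{1/2}\beps\|$.

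Then we treat the face term. We write $\bv_{(\partial T)}-\bv_T={\bf S}(\bhv)+(\bm\Pi-I)\bv_T$ and apply \Cref{lem:proj}, which bounds $\frac{2\mu_i}{h_T}\|\bv_{(\partial T)^i}-\bv_{T^i}\|^2$ by the ${\bf S}$ term plus $2\mu_i\|\beps(\bv_{T^i})\|^2$, which is already controlled. All constants depend only on $c_{\rm disc}$, $k$ and the mesh parameters, uniformly in $\lambda_i,\mu_i$ and $\bK$.
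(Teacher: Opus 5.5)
Your upper bound is essentially the paper's argument, but the lower bound has a genuine gap. For the $2\mu$-part of the face term you assert that $(\bm\Pi_{(\partial T)^i}-I)\bv_{T^i}$ ``cannot be discarded''. You then note, correctly, that \Cref{lem:proj} would only return an $O(1)$ multiple of $2\mu_i\|\beps(\bv_{T^i})\|_{T^i}^2$, which cannot be absorbed, and you defer to a workaround that is never carried out. Passing to $\bm\psi=\bv_{T^i}-\bm\Pi_{RM}\bv_{T^i}$ changes nothing, since $\beps$ and $I-\bm\Pi_{(\partial T)^i}$ both annihilate $RM$. The compactness/scaling ``fallback'' is essentially the $\mu$-part of the claim itself. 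It would also have to be uniform with respect to how $\Gamma$ cuts $T$ (the subcells $T^i$ are not images of a reference cell), and it would have to account for the interface terms of \eqref{eq:E1}, through which $\bE_{T^1}$ depends on $\jump{\bv_T}_\Gamma$. As written, the bound $X^2\lesssim\mathcal N_T(\bhv_T)$ with $X^2:=\sum_{i=1,2}\|\CC_i^{1/2}\beps(\bv_{T^i})\|_{T^i}^2$ is not proved. It is the core of the lower bound, and your last step through \Cref{lem:proj} relies on it.

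The obstruction is not there. Since $\bv_{T^i}\in\bPk{k+1}(T^i)$, one has $\beps(\bv_{T^i})\in\sPk{k}(T^i)$. Since the subfaces are planar, the \emph{whole} traction $\CC_i\beps(\bv_{T^i})\bn_T=2\mu_i\beps(\bv_{T^i})\bn_T+\lambda_i(\nabla\!\cdot\!\bv_{T^i})\bn_T$, not only its $\lambda$-part, has its components in $\bPk{k}(F^i)$ on every subface. Orthogonality of $\bm\Pi_{(\partial T)^i}$ then gives $\langle\bv_{(\partial T)^i}-\bv_{T^i},\CC_i\beps(\bv_{T^i})\bn_T\rangle_{(\partial T)^i}=\langle{\bf S}(\bhv_{T^i}),\CC_i\beps(\bv_{T^i})\bn_T\rangle_{(\partial T)^i}$. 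By \eqref{eq:Csquare} and the discrete trace inequality, this is bounded by $c_{\rm disc}\big(\tfrac{2\mu_i+d\lambda_i}{h_T}\big)^{1/2}\|{\bf S}(\bhv_{T^i})\|_{(\partial T)^i}\,X$. Together with your (correct) bounds of the volume and interface terms, every contribution is $\lesssim\mathcal N_T(\bhv_T)^{1/2}X$, and one simply divides by $X$. Nothing has to be absorbed, so no small constant is needed, and no separate treatment of the $\lambda$-part through \eqref{eq:D1} is required either; this is precisely the paper's proof.

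The same fact applies to $\CC_i\bE_{T^i}(\bhv_T)\bn_T$ in your upper bound. There the term $\langle(\bm\Pi_{(\partial T)^i}-I)\bv_{T^i},\lambda_iD_{T^i}(\bhv_T)\bn_T\rangle_{(\partial T)^i}$ vanishes identically. Estimating it through \Cref{lem:proj}, as you propose, would instead produce $\lambda_i^{1/2}\|\beps(\bv_{T^i})\|_{T^i}$, which $\tnorm{\bhv_T}_{a_T}$ does not control uniformly in $\lambda_i/\mu_i$.
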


\begin{proof}
\emph{Lower bound.} We first bound $\sum_{i=1,2}\|\CC_i^{1/2}\beps(\bv_{T^i})\|_{T^i}^2$. Testing
\eqref{eq:E1}--\eqref{eq:E2} with $\btau=\CC_i\beps(\bv_{T^i})$ and moving the interface
terms to the right-hand side, we obtain
\begin{align*}
\sum_{i=1,2}\|\CC_i^{1/2}\beps(\bv_{T^i})\|_{T^i}^2
&=\sum_{i=1,2}\Big[\big(\bE_{T^i}(\bhv_T),\CC_i\beps(\bv_{T^i})\big)_{T^i}
-\big\langle\bv_{(\partial T)^i}-\bv_{T^i},\CC_i\beps(\bv_{T^i})\bn_T
\big\rangle_{(\partial T)^i}\Big]\\
&\quad+\big\langle(\bI-\bS_h\bK)\bK\bG_{T^1}(\bhv_T)\bn,
\CC_1\beps(\bv_{T^1})\bn\big\rangle_{T^\Gamma}\\
&\quad+\big\langle(\bI-\bS_h\bK)\jump{\bv_T}_\Gamma,
\CC_1\beps(\bv_{T^1})\bn\big\rangle_{T^\Gamma}.
\end{align*}
Write $X:=\big(\sum_{i=1,2}\|\CC_i^{1/2}\beps(\bv_{T^i})\|_{T^i}^2\big)^{1/2}$. The first term is
bounded by $\|\CC_i^{1/2}\bE_{T^i}(\bhv_T)\|_{T^i}X$. In the second one,
$\CC_i\beps(\bv_{T^i})\bn_T$ has its components in $\bPk{k}(F^i)$ on every subface, so that
$\bv_{(\partial T)^i}-\bv_{T^i}$ may be replaced by ${\bf S}(\bhv_{T^i})$; using
\eqref{eq:Csquare} and the discrete trace inequality, it is bounded by
\[
c_{\rm disc}\Big(\tfrac{2\mu_i+d\lambda_i}{h_T}\Big)^{1/2}
\|{\bf S}(\bhv_{T^i})\|_{(\partial T)^i}\,X .
\]
For the third term, we use \eqref{eq:ImSKK}, \eqref{eq:Csquare} and the discrete trace
inequality, together with $\delta=2\mu_1+d\lambda_1$, which give the bound
\[
\tfrac{h_T}{\delta}(2\mu_1+d\lambda_1)c_{\rm disc}^2h_T^{-1}
\|\CC_1^{1/2}\bE_{T^1}(\bhv_T)\|_{T^1}X
=c_{\rm disc}^2\|\CC_1^{1/2}\bE_{T^1}(\bhv_T)\|_{T^1}X .
\]
For the fourth one, we use the factorisation
$\bI-\bS_h\bK=\big(\tfrac{h_T}{\delta}\big)^{1/2}
\big(\tfrac{h_T}{\delta}\bS_h\big)^{1/2}\bS_h^{1/2}$ of \Cref{lem:Sh} together with
$\big|\tfrac{h_T}{\delta}\bS_h\big|\le1$, which yields
\[
\Big(\tfrac{h_T}{\delta}\Big)^{1/2}\|\bS_h^{1/2}\jump{\bv_T}_\Gamma\|_{T^\Gamma}
\|\CC_1\beps(\bv_{T^1})\bn\|_{T^\Gamma}\le c_{\rm disc}
\|\bS_h^{1/2}\jump{\bv_T}_\Gamma\|_{T^\Gamma}X .
\]
Collecting the four bounds and dividing by $X$ shows that $X^2$ is bounded, up to a
constant, by $\mathcal{N}_T(\bhv_T)$, which controls the first two contributions to \eqref{eq:tnorm}.
It remains to bound the face contributions. The $\lambda$-weighted ones are directly part of
$\mathcal{N}_T(\bhv_T)$. For the $\mu$-weighted ones, we write
$\bv_{(\partial T)^i}-\bv_{T^i}={\bf S}(\bhv_{T^i})+(\bm{\Pi}_{(\partial T)^i}\bv_{T^i}-\bv_{T^i})$
and invoke \Cref{lem:proj}, which gives
\[
\frac{2\mu_i}{h_T}\|\bv_{(\partial T)^i}-\bv_{T^i}\|_{(\partial T)^i}^2
\lesssim\frac{2\mu_i}{h_T}\|{\bf S}(\bhv_{T^i})\|_{(\partial T)^i}^2
+2\mu_i\|\beps(\bv_{T^i})\|_{T^i}^2 ,
\]
and both right-hand sides have already been bounded. Note that no $\lambda$ appears in this
last step, which is the reason for the definition \eqref{eq:tnorm}.

\emph{Upper bound.} By \eqref{eq:stabnorm}, the third contribution to
$\mathcal{N}_T(\bhv_T)$ is bounded by $\tnorm{\bhv_T}_{a_T}^2$, and so is $s_T^\Gamma$. For the first
two, we test \eqref{eq:E1}--\eqref{eq:E2} with $\btau=\CC_i\bE_{T^i}(\bhv_T)$ and argue exactly as
above, replacing $\CC_i\beps(\bv_{T^i})$ by $\CC_i\bE_{T^i}(\bhv_T)$ and using
$\sum_{i=1,2}\|\CC_i^{1/2}\bE(\bv_{T^i})\|_{T^i}^2\le a_T(\bhv_T,\bhv_T) $.
\end{proof}

\subsection{Interpolation and approximation}\label{ssec:interp}

Let $\nu\in(\tfrac12,k+1]$ and let $\bm{E}_i:\bm{H}^{1+\nu}(\Omega_i)\to\bm{H}^{1+\nu}(\mathbb{R}^d)$,
$i=1,2$, be stable extension operators. The key operator in the HHO error analysis is the
local interpolation operator $\Iint_T:\bm{H}^{1+\nu}(\Omega_1\cup\Omega_2)\to\VT$ defined by
\begin{equation}\label{eq:interp}
\Iint_T(\bv)=\big(\bm{I}_{T^1}(\bv_1),\bm{I}_{T^2}(\bv_2),
\bm{\Pi}_{(\partial T)^1}(\bv_1),\bm{\Pi}_{(\partial T)^2}(\bv_2)\big)\in\VT,\qquad
\bm{I}_{T^i}(\bv):=\big(\bm{\Pi}^{k+1}_{T^\dagger}(\bm{E}_i(\bv))\big)|_{T^i},
\end{equation}
where $T^\dagger$ is the ball of \Cref{ass:resolve} and $\bm{\Pi}^{k+1}_{T^\dagger}$ is the
$L^2$-orthogonal projection onto $\bPk{k+1}(T^\dagger)$. Projecting on the larger set
$T^\dagger$ rather than on the subcell $T^i$ is what makes the approximation properties
independent of how $\Gamma$ cuts $T$, see \cite[Sect.~3.3]{BCDE2021}. We also define
$\Iint_h(\bv)\in\Vh$ by $\Iint_h(\bv)|_T:=\Iint_T(\bv)$; note that $\Iint_h(\bv)\in\Vhz$
whenever $\bv\in\bm{H}^1_0(\Omega_1\cup\Omega_2)$. The cell component of $\Iint_h$ is denoted
$\bm{I}_\mathcal{T}$.

\begin{lemma}[Local approximation]\label{lem:approx}
Let $\bv\in\bm{H}^1(\Omega_1\cup\Omega_2)$ with $\bv|_{\Omega_i}\in\bm{H}^{m+1}(\Omega_i)$ for
some $m\ge0$ and set $l:=\min\{k+1,m\}$. For any $T\in\Th$ and $i=1,2$,
\begin{equation}\label{eq:approx}
\|\bv-\bm{I}_{T^i}(\bv)\|_{T^i}+h_T^{1/2}\|\bv-\bm{I}_{T^i}(\bv)\|_{(\partial T)^i\cup T^\Gamma}
+h_T|\bv-\bm{I}_{T^i}(\bv)|_{H^1(T^i)}\lesssim h_T^{l+1}|\bm{E}_i(\bv)|_{H^{l+1}(T^\dagger)},
\end{equation}
and, summing over the mesh cells and using \Cref{ass:grading},
$\sum_{T\in\Th}|\bm{E}_i(\bv)|^2_{H^{l+1}(T^\dagger)}\lesssim|\bv|^2_{H^{l+1}(\Omega_i)}$.
\end{lemma}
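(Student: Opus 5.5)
We work on each side $i$ separately and write $\bw:=\bm{E}_i(\bv)\in\bm{H}^{m+1}(\mathbb{R}^d)$, so that $\bv-\bm{I}_{T^i}(\bv)=(\bw-\bm{\Pi}^{k+1}_{T^\dagger}\bw)|_{T^i}$ because $\bw=\bv$ on $\Omega_i$. The whole estimate \eqref{eq:approx} then reduces to approximation properties of the $L^2$-projection onto $\bPk{k+1}$ on the ball $T^\dagger$, transported to $T^i\subset T\subset T^\dagger$ (\Cref{ass:resolve}(i)).

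\emph{Step 1: approximation on the ball.} Since $T^\dagger$ is a ball of radius $\gamma^{-1}h_T$, it is star-shaped with respect to a ball of comparable size. Hence the Bramble--Hilbert lemma with averaged Taylor polynomials applies, and a scaling argument gives the following bound. Let $\bm\eta:=\bw-\bm{\Pi}^{k+1}_{T^\dagger}\bw$. Then
\[
\|\bm\eta\|_{T^\dagger}+h_T\|\nabla\bm\eta\|_{T^\dagger}\lesssim h_T^{l+1}|\bw|_{H^{l+1}(T^\dagger)},
\qquad l=\min\{k+1,m\}.
\]
This uses that $\bm{\Pi}^{k+1}_{T^\dagger}$ reproduces $\bPk{l}$, since $l\le k+1$. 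For the gradient we insert the averaged Taylor polynomial $\bm p$ of degree $l$. We then use the $L^2$-stability of the projection together with an inverse inequality on the ball for the polynomial $\bm{\Pi}^{k+1}_{T^\dagger}(\bw-\bm p)$. The constants depend only on $k$ and $\gamma$.

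\emph{Step 2: restriction and traces.} Restricting from $T^\dagger$ to $T^i$ immediately gives the first and third terms of \eqref{eq:approx}. For the trace term we apply \Cref{lem:mtr} to $\bm\eta\in\bm{H}^1(T^\dagger)$:
\[
h_T^{1/2}\|\bm\eta\|_{(\partial T)^i\cup T^\Gamma}\lesssim\|\bm\eta\|_{T^\dagger}+h_T^{1/2}\|\bm\eta\|_{T^\dagger}^{1/2}\|\nabla\bm\eta\|_{T^\dagger}^{1/2}.
\]
Combining this with Step 1 gives $\lesssim h_T^{l+1}|\bw|_{H^{l+1}(T^\dagger)}$. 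None of these bounds depends on how $\Gamma$ cuts $T$, since only the ball $T^\dagger$ enters.

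\emph{Step 3: the summed bound.} By \Cref{ass:grading}, $T^\dagger\subset\triangle_{n_0}(T)$. By shape regularity and mild grading, each point of $\mathbb{R}^d$ lies in at most a uniformly bounded number of the sets $\triangle_{n_0}(T)$. It follows that $\sum_{T}|\bw|^2_{H^{l+1}(T^\dagger)}\lesssim|\bw|^2_{H^{l+1}(\mathbb{R}^d)}$.

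\emph{Main obstacle.} The delicate point is to end with the \emph{seminorm} $|\bv|_{H^{l+1}(\Omega_i)}$, whereas a generic Stein extension is only bounded in the full norm. We deal with this through the choice of the (linear, stable) extension operator. Let $\bm E^{\rm S}_i$ be Stein's extension operator for the Lipschitz domain $\Omega_i$. Let $\bm p_i(\bv)\in\bPk{l}$ be the averaged Taylor polynomial of $\bv$ over a ball contained in $\Omega_i$; it depends linearly on $\bv$. We set
\[
\bm{E}_i(\bv):=\bm E^{\rm S}_i\big(\bv-\bm p_i(\bv)\big)+\bm p_i(\bv),
\]
where $\bm p_i(\bv)$ is understood as a polynomial on all of $\mathbb{R}^d$. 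This operator is linear, it is an extension, and it is stable in $\bm{H}^{1+\nu}$. Moreover the polynomial part has vanishing $H^{l+1}$-seminorm, so the Bramble--Hilbert lemma on $\Omega_i$ gives
\[
|\bm{E}_i(\bv)|_{H^{l+1}(\mathbb{R}^d)}\lesssim\|\bv-\bm p_i(\bv)\|_{H^{l+1}(\Omega_i)}\lesssim|\bv|_{H^{l+1}(\Omega_i)}.
\]
This proves the claimed summed bound.

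The operator depends on $l$ only through the degree of $\bm p_i$. The additional constants depend only on $\Omega_i$, which is fixed. I expect the justification of this polynomial-preserving extension, and its compatibility with the extension used in \eqref{eq:interp}, to be the step requiring the most care.
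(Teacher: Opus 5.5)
Your Steps 1--3 are the argument the paper relies on (it simply refers to \cite{BCDE2021}): approximation by $\bm{\Pi}^{k+1}_{T^\dagger}$ on the ball, \Cref{lem:mtr} for the trace term, and the finite overlap of the layers $\triangle_{n_0}(T)$ for the summation. You go beyond the paper on the seminorm in the summed bound. The paper just asserts it, and you are right that it fails for a generic stable extension. On a cut cell, $T^\dagger$ reaches into $\Omega_{\bar i}$, so $\sum_T|\bm{E}_i(\bv)|^2_{H^{l+1}(T^\dagger)}$ need not vanish when $\bv|_{\Omega_i}\in\bPk{l}$. Your polynomial-corrected extension is the right cure, and it gives the statement as written rather than the weaker one with $\|\bv\|_{\bm{H}^{l+1}(\Omega_i)}$. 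Two details need repair.

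\emph{The polynomial part.} A nonzero polynomial is not in $\bm{L}^2(\mathbb{R}^d)$, so $\bm E^{\rm S}_i(\bv-\bm p_i(\bv))+\bm p_i(\bv)$ does not lie in $\bm{H}^{1+\nu}(\mathbb{R}^d)$, contrary to your stability claim. Multiply $\bm p_i(\bv)$ by a cutoff $\chi\in C^\infty_c(\mathbb{R}^d)$ with $\chi\equiv1$ on a neighbourhood of $\overline\Omega$. Since $T^\dagger\subset\triangle_{n_0}(T)\subset\overline\Omega$ by \Cref{ass:grading}, you can write Step 3 with $\Omega$ in place of $\mathbb{R}^d$, where the cutoff plays no role.

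\emph{Dependence on $l$.} An extension whose definition depends on $l$ makes $\bm{I}_{T^i}$ depend on the regularity index. This is incompatible with its use as one fixed operator, for instance $\dtt\bm{I}_\mathcal{T}\bu=\bm{I}_\mathcal{T}(\dtt\bu)$ in \Cref{prop:energy}, where $\bu$ and $\dtt\bu$ are measured with different $l$. Fix the degree of $\bm p_i$ at $k+1$, e.g.\ take the $\bm{L}^2(B_i)$-orthogonal projection onto $\bPk{k+1}$ for a fixed ball $B_i\subset\Omega_i$. Then, for every $l\le k+1$, the map $\bv\mapsto\bv-\bm p_i(\bv)$ is bounded on $\bm{H}^{l+1}(\Omega_i)$ and vanishes on $\bPk{l}$. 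The Deny--Lions lemma on the connected Lipschitz domain $\Omega_i$ (componentwise otherwise) then gives $\|\bv-\bm p_i(\bv)\|_{\bm{H}^{l+1}(\Omega_i)}\lesssim|\bv|_{H^{l+1}(\Omega_i)}$ uniformly in $l$. This also replaces your appeal to the Bramble--Hilbert lemma with averaged Taylor polynomials. That lemma needs $\Omega_i$ to be star-shaped with respect to a ball, so it does not directly cover, say, an annular $\Omega_2$.
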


\Cref{lem:approx} is proved as \cite[(3.6) and (3.7)]{BCDE2021}, combining the
approximation properties of $\bm{\Pi}^{k+1}_{T^\dagger}$ on the ball $T^\dagger$ with
\Cref{lem:mtr}.

In order to handle the nonhomogeneous interface data, it is convenient to introduce the
discrete lifting operator $\bL_{T^1}:\bm{L}^2(T^\Gamma)\to\sPk{k}(T^1)$, defined for any
$\bg\in\bm{L}^2(T^\Gamma)$ by
\begin{equation}\label{eq:lift}
(\bL_{T^1}(\bg),\btau)_{T^1}
+\big\langle(\bI-\bS_h\bK)\bK\,\CC_1\bL_{T^1}(\bg)\bn,\btau\bn\big\rangle_{T^\Gamma}
=\big\langle(\bI-\bS_h\bK)\bg,\btau\bn\big\rangle_{T^\Gamma}\qquad
\forall\,\btau\in\sPk{k}(T^1),
\end{equation}
which is well posed by the argument of \Cref{lem:wp}, and $\bL_{T^i}:=\bm{0}$ for
$T\in\Th^i$, $i=1,2$. Comparing \eqref{eq:E1} and \eqref{eq:lift} shows that, if
$\bg=\jump{\bv}+\bK\bsig(\bv_1)\bn$ on $T^\Gamma$, then
$\bE_{T^1}(\Iint_T\bv)+\bL_{T^1}(\bg)$ is the reconstruction associated with the
\emph{consistent} data. Accordingly, we set
\begin{equation}\label{eq:eta}
\bm\eta_{T^1}(\bv):=\bE_{T^1}(\Iint_T\bv)+\bL_{T^1}\big(\jump{\bv}+\bK\bsig(\bv_1)\bn\big)
-\beps(\bv_1),\qquad
\bm\eta_{T^2}(\bv):=\bE_{T^2}(\Iint_T\bv)-\beps(\bv_2).
\end{equation}

\begin{lemma}[Approximation of the reconstructions]\label{lem:etaapprox}
Let $\bv\in\bm{H}^{k+2}(\Omega_1\cup\Omega_2)$. For any $T\in\Th$ and $i=1,2$,
\begin{equation}\label{eq:etaapprox}
\|\bm\eta_{T^i}(\bv)\|_{T^i}+h_T^{1/2}\|\bm\eta_{T^i}(\bv)\|_{(\partial T)^i\cup T^\Gamma}
\lesssim h_T^{k+1}\sum_{j=1,2}|\bm{E}_j(\bv)|_{H^{k+2}(T^\dagger)} .
\end{equation}
(The right-hand side of \eqref{eq:etaapprox} only involves $\bm{E}_2(\bv)$ when $i=2$.)
\end{lemma}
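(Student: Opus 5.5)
The plan is to split $\bm\eta_{T^i}(\bv)$ into an approximation part and a discrete part:
\[
\bm\eta_{T^i}(\bv)=\big(\bm{\Pi}^k_{T^i}\beps(\bv_i)-\beps(\bv_i)\big)+\bm\delta_{T^i},\qquad \bm\delta_{T^i}:=\bm\eta_{T^i}(\bv)+\beps(\bv_i)-\bm{\Pi}^k_{T^i}\beps(\bv_i)\in\sPk{k}(T^i).
\]
Here $\bm{\Pi}^k_{T^i}$ is a cut-robust projection, for instance $\beps$ of $\bm{\Pi}^{k+1}_{T^\dagger}\bm{E}_i(\bv)$ restricted to $T^i$, i.e.\ $\beps(\bm{I}_{T^i}\bv_i)$. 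With this choice, \Cref{lem:approx} together with \Cref{lem:mtr} applied to $\nabla(\bm{E}_i\bv-\bm{I}_{T^i}\bv)$ on $T^\dagger$ bounds the first part, both in $L^2(T^i)$ and, after scaling by $h_T^{1/2}$, on $(\partial T)^i\cup T^\Gamma$, by $h_T^{k+1}|\bm{E}_i(\bv)|_{H^{k+2}(T^\dagger)}$. For the polynomial $\bm\delta_{T^i}$, the discrete trace inequality of \Cref{lem:disc} reduces the boundary norm to the cell norm. It therefore suffices to show $\|\bm\delta_{T^i}\|_{T^i}\lesssim h_T^{k+1}\sum_j|\bm{E}_j(\bv)|_{H^{k+2}(T^\dagger)}$.

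\emph{Stiffer side $i=2$.} Integrating $(\beps(\bv_2),\btau)_{T^2}$ by parts against $\btau\in\sPk{k}(T^2)$ and subtracting \eqref{eq:E2} with $\bhv_T=\Iint_T\bv$ gives, with $\bm\xi:=\bv_2-\bm{I}_{T^2}\bv_2$,
\[
(\bm\eta_{T^2},\btau)_{T^2}=-(\beps(\bm\xi),\btau)_{T^2}+\langle\bm\xi,\btau\bn_T\rangle_{(\partial T)^2}+\langle\bv_2,\btau\bn\rangle_{T^\Gamma}-\langle\bv_2,\btau\bn\rangle_{T^\Gamma}.
\]
The face projection $\bm{\Pi}_{(\partial T)^2}$ drops out against $\btau\bn_T\in\bPk{k}$. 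One subtlety: no interface unknown appears in \eqref{eq:E2}, so I have to check that the $T^\Gamma$ part of $\partial(T^2)$ is handled consistently; for it to cancel, the integration by parts must only involve $(\partial T)^2$, and this needs re-checking. In any case each term is bounded via Cauchy--Schwarz, the discrete trace inequality on $\btau$, and \Cref{lem:approx}. Taking $\btau=\bm\delta_{T^2}$ then yields the claim with $\bm{E}_2$ only.

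\emph{Softer side $i=1$.} Set $\bm{Z}:=\bE_{T^1}(\Iint_T\bv)+\bL_{T^1}(\bg)$ with $\bg=\jump{\bv}+\bK\bsig(\bv_1)\bn$. Adding \eqref{eq:E1} and \eqref{eq:lift}, using $\CC_1(\bE_{T^1}+\bL_{T^1})=\CC_1\bm{Z}$ and $\jump{\bv}-\jump{\bm{I}_T\bv}_\Gamma=\bm\xi_1-\bm\xi_2$, the jump terms combine into
\[
(\bm{Z},\btau)_{T^1}+\langle(\bI-\bS_h\bK)\bK\CC_1\bm{Z}\bn,\btau\bn\rangle_{T^\Gamma}=(\beps(\bm{I}_{T^1}\bv_1),\btau)_{T^1}+\langle\bm{\Pi}\bv_1-\bm{I}\bv_1,\btau\bn_T\rangle_{(\partial T)^1}+\langle(\bI-\bS_h\bK)(\bm\xi_1-\bm\xi_2+\bK\bsig(\bv_1)\bn),\btau\bn\rangle_{T^\Gamma}.
\]
The exact strain satisfies the same identity with $\bm{Z}$ replaced by $\beps(\bv_1)$, using $\CC_1\beps(\bv_1)=\bsig(\bv_1)$ and integration by parts. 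Subtracting, $\bm\eta_{T^1}$ solves
\[
(\bm\eta,\btau)_{T^1}+\langle(\bI-\bS_h\bK)\bK\CC_1\bm\eta\bn,\btau\bn\rangle_{T^\Gamma}=R(\btau),
\]
where $R$ collects boundary terms in $\bm\xi_1$ and $\bm\xi_2$ on $(\partial T)^1\cup T^\Gamma$, each bounded thanks to $|\bI-\bS_h\bK|\le1$ from \eqref{eq:unitbound}.

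The main obstacle is extracting a bound on $\bm\delta_{T^1}$ from this modified equation uniformly in $\bK$ and in the Lam\'e parameters. I would first test with $\btau=\CC_1\bm\delta_{T^1}$. The interface form is positive semidefinite by \Cref{lem:Sh}, and its cross term with the polynomial-approximation part of $\bm\eta$ is controlled using \eqref{eq:ImSKK}, \eqref{eq:Csquare} and $\delta=2\mu_1+d\lambda_1$: these give a factor $\frac{h_T}{\delta}(2\mu_1+d\lambda_1)h_T^{-1}=1$ exactly as in \Cref{lem:locstab}. This produces $\|\CC_1^{1/2}\bm\delta\|^2\lesssim$ (approximation terms)$\times(2\mu_1+d\lambda_1)^{1/2}\|\CC_1^{1/2}\bm\delta\|$. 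That yields a bound on $\|\bm\delta\|_{T^1}$ only up to the ratio $(2\mu_1+d\lambda_1)/\mu_1$, which is bounded since $\lambda$ is bounded and $\mu\ge c$. If that is unsatisfactory, I would instead test with $\btau=\bm\delta$ and use $|\CC_1\bm\delta|\le\delta|\bm\delta|$ combined with \eqref{eq:ImSKK}, which gives a $\lambda$-free bound.
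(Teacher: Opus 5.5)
Your splitting is the paper's: $\bm\delta_{T^1}$ is exactly its $\bm\eta'_{T^1}$. Your treatment of $\beps(\bm{I}_{T^i}\bv_i)-\beps(\bv_i)$ is fine, including the use of \Cref{lem:mtr} on the gradient for the boundary norm, which the paper leaves implicit. The case $i=2$ is also fine: no integration by parts is needed, since subtracting \eqref{eq:E2} directly gives $(\bm\delta_{T^2},\btau)_{T^2}=\langle\bm\xi,\btau\bn_T\rangle_{(\partial T)^2}$.

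The gap is the final step for $i=1$. Move the $\beps(\bm\xi_1)$-contributions of your $\bm\eta$-equation to the right. Then $\bm\delta:=\bm\delta_{T^1}$ solves $(\bm\delta,\btau)_{T^1}+\langle(\bI-\bS_h\bK)\bK\CC_1\bm\delta\bn,\btau\bn\rangle_{T^\Gamma}=R'(\btau)$ for all $\btau\in\sPk{k}(T^1)$. Here $R'$ is the right-hand side of the paper's identity for $\bm\eta'_{T^1}$, and $|R'(\btau)|\le\rho\|\btau\|_{T^1}$ with $\rho\lesssim h_T^{k+1}\sum_j|\bm{E}_j(\bv)|_{H^{k+2}(T^\dagger)}$, free of the Lam\'e coefficients and of $\bK$.

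Your first test, $\btau=\CC_1\bm\delta$, is correct but only gives $\|\bm\delta\|_{T^1}\lesssim(1+\kappa_1)^{1/2}\rho$. Invoking the boundedness of $\lambda,\mu$ proves a weaker statement, because the convention for $\lesssim$ fixed in \Cref{sec:prelim} excludes any dependence on the Lam\'e coefficients.

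Your second test, $\btau=\bm\delta$, fails as described. With $|\CC_1\bm\delta|\le\delta|\bm\delta|$, \eqref{eq:ImSKK} and the discrete trace inequality, the interface term is only bounded by $c_{\rm disc}^2\|\bm\delta\|_{T^1}^2$. This cannot be absorbed into $\|\bm\delta\|_{T^1}^2$, because $c_{\rm disc}$ is not small. Nor can you simply discard the term, which is what the paper's proof does by calling it nonnegative. For $\lambda_1>0$ it has no sign: for $d=2$, isotropic $\bK$ and $\bm\delta=\bn\otimes\bn-a\,\bt\otimes\bt$, the integrand equals $\frac{h_T}{\delta}\frac{\beta}{h_T/\delta+\beta}\big(2\mu_1+\lambda_1(1-a)\big)$, which is negative for $a$ large and $\beta>0$.

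The missing idea is to chain your two tests. In the first one the interface term is a square, so by \eqref{eq:Csquare}
\[
\|\CC_1^{1/2}\bm\delta\|_{T^1}^2\le R'(\CC_1\bm\delta)\le\rho\,\|\CC_1\bm\delta\|_{T^1}\le\rho\,\delta^{1/2}\|\CC_1^{1/2}\bm\delta\|_{T^1},
\qquad\text{hence}\qquad
\|\CC_1\bm\delta\|_{T^1}\le\delta^{1/2}\|\CC_1^{1/2}\bm\delta\|_{T^1}\le\delta\rho .
\]
In the second one, bound the interface term by $\frac{h_T}{\delta}\|\CC_1\bm\delta\|_{T^\Gamma}\|\bm\delta\|_{T^\Gamma}\le\frac{c_{\rm disc}^2}{\delta}\|\CC_1\bm\delta\|_{T^1}\|\bm\delta\|_{T^1}\le c_{\rm disc}^2\rho\|\bm\delta\|_{T^1}$. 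This yields $\|\bm\delta\|_{T^1}\le(1+c_{\rm disc}^2)\rho$, uniformly in $\lambda_1$, $\mu_1$ and $\bK$. The discrete trace inequality and the triangle inequality then finish the proof as you planned.
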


\begin{proof}
We only treat $i=1$, the case $i=2$ being simpler. Set $\bm\eta'_{T^1}(\bv)=\bE_{T^1}(\Iint_T\bv)+\bL_{T^1}\big(\jump{\bv}+\bK\bsig(\bv_1)\bn\big)
	-\beps (\bm{I}_{T^1}\bv_1)$. Testing the equation satisfied by
$\bm\eta'_{T^1}(\bv)$ with $\btau=\bm\eta'_{T^1}(\bv)$ and using the definitions
\eqref{eq:E1} and \eqref{eq:lift} together with
$\jump{\bv}+\bK\bsig(\bv_1)\bn-\jump{\Iint_T\bv}_\Gamma
=\jump{\bv-\bm{I}_T\bv}_\Gamma+\bK\bsig(\bv_1)\bn$, we obtain
\begin{align*}
\|\bm\eta'_{T^1}(\bv)\|_{T^1}^2
&+\big\langle(\bI-\bS_h\bK)\bK\CC_1\bm\eta'_{T^1}(\bv)\bn,\bm\eta'_{T^1}(\bv)\bn
\big\rangle_{T^\Gamma}\\
&=\big\langle\bm{\Pi}_{(\partial T)^1}\bv-\bm{I}_{T^1}\bv,\bm\eta'_{T^1}(\bv)\bn_T
\big\rangle_{(\partial T)^1}\\
&\quad+\big\langle(\bI-\bS_h\bK)\jump{\bv-\bm{I}_T\bv}_\Gamma,\bm\eta'_{T^1}(\bv)\bn
\big\rangle_{T^\Gamma}\\
&\quad+\big\langle(\bI-\bS_h\bK)\bK\bsig\big(\bv_1
-\bm{I}_{T^1}\bv\big)\bn,\bm\eta'_{T^1}(\bv)\bn\big\rangle_{T^\Gamma}.
\end{align*} 
Since $\big(\bm{\eta}'_{T^1}(\bv)\bn\big)|_{(\partial T)^1}\in {\bf P}^k(\mathcal{F}_{(\partial T)^1})$ owing to the function spaces on the reconstruction operator $\bE_{T^1}$ and the lift operator $\bL_{T^1}$, we can replace $\bm{\Pi}_{(\partial T)^1}\bv$ by $\bv$ in the first term on the right hand side. The second term on the left-hand side is
nonnegative. Bounding the three terms of the right-hand side by the Cauchy--Schwarz
inequality, using \eqref{eq:unitbound} and \eqref{eq:ImSKK} together with
the discrete trace inequality for $\|\bm\eta'_{T^1}(\bv)\|_{(\partial T)^1\cup T^\Gamma}$, and dividing by
$\|\bm\eta'_{T^1}(\bv)\|_{T^1}$ leads to
\begin{align*}
	  \|\bm\eta'_{T^1}(\bv)\|_{T^1}
	  &+h_T^{1/2}\|\bm\eta'_{T^1}(\bv)\|_{(\partial T)^1\cup T^\Gamma}
	\lesssim \|\bm\eta'_{T^1}(\bv)\|_{T^1} \\
	\lesssim& h_T^{-\frac12} \big[\|\bv-\bm{I}_{T^1}\bv\|_{(\partial T)^1} +\|\jump{\bv-\bm{I}_T\bv}_\Gamma\|_{T^\Gamma}
	+\tfrac{h_T}{\delta}\|\bsig(\bv_1-\bm{I}_{T^1}\bv)\bn\|_{T^\Gamma}\big].
\end{align*} 
Invoking \Cref{lem:approx} together with $\bm{\eta}_{T^1}(\bv)=\bm{\eta}'_{T^1}(\bv) +\beps\big( \bm{I}_{T^1}(\bv_1)-\bv_1\big)$ and the triangular inequality then proves \eqref{eq:etaapprox}. 
\end{proof}

We close this section with a discrete Poincar\'e inequality on $\Vhz$, which is the analogue
of \cite[Lem.~6.2]{BurmanDuranErn2022CMAME}; we set $\Omega^\Gamma:=\bigcup_{T\in\Th^\Gamma}T$
and denote by $l_\Omega$ the diameter of $\Omega$.

\begin{lemma}[Discrete Poincar\'e inequality]\label{lem:poincare}
For any $\bhv_h\in\Vhz$ we have
\[
\|\bv_\mathcal{T}\|_\Omega\lesssim l_\Omega\,\mu_1^{-1/2}\,\tnorm{\bhv_h}_h .
\]
\end{lemma}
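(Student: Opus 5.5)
The plan is a duality argument with a continuous auxiliary problem, following \cite[Lem.~6.2]{BurmanDuranErn2022CMAME}. Since $\mu_1\le\mu_2$ by \eqref{eq:softside}, it suffices to control $\|\bv_\mathcal{T}\|_\Omega$ by $\mu_1^{-1/2}$ times a Lam\'e-free broken norm. Precisely, we will show
\[
\|\bv_\mathcal{T}\|_\Omega\lesssim l_\Omega\Big(\sum_{T\in\Th}\sum_{i=1,2}\|\nabla\bv_{T^i}\|_{T^i}^2+h_T^{-1}\|\bv_{(\partial T)^i}-\bv_{T^i}\|^2_{(\partial T)^i}+h_T^{-1}\|\jump{\bv_T}_\Gamma\|^2_{T^\Gamma}\Big)^{1/2}
\]
and then bound the right-hand side by $\mu_1^{-1/2}\tnorm{\bhv_h}_h$.

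\textbf{Duality step.} Let $\bzeta\in\bm{H}^1_0(\Omega)$ solve $-\Delta\bzeta=\bv_\mathcal{T}$ componentwise on the whole convex domain $\Omega$, ignoring the interface. Then $\bzeta\in\bm{H}^2(\Omega)$ with $\|\nabla\bzeta\|_\Omega\lesssim l_\Omega\|\bv_\mathcal{T}\|_\Omega$ and $\|\nabla\bzeta\|_{\bm H^1(\Omega)}\lesssim\|\bv_\mathcal{T}\|_\Omega$. Write
\[
\|\bv_\mathcal{T}\|^2_\Omega=\sum_T\sum_i(-\Delta\bzeta,\bv_{T^i})_{T^i}
\]
and integrate by parts on each subcell $T^i$. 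This produces the volume terms $(\nabla\bzeta,\nabla\bv_{T^i})_{T^i}$ and boundary terms $-\langle\nabla\bzeta\,\bn,\bv_{T^i}\rangle$ on $(\partial T)^i$ and $T^\Gamma$.

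Next insert the face unknowns. Because $\nabla\bzeta\,\bn$ is single-valued across each interior face and the face unknowns are single-valued and vanish on $\partial\Omega$, we have $\sum_T\langle\nabla\bzeta\bn_T,\bv_{(\partial T)^i}\rangle_{(\partial T)^i}=0$. The boundary terms therefore become $\langle\nabla\bzeta\bn_T,\bv_{(\partial T)^i}-\bv_{T^i}\rangle$. On $T^\Gamma$ the two sides combine into $-\langle\nabla\bzeta\bn,\jump{\bv_T}_\Gamma\rangle_{T^\Gamma}$, since $\nabla\bzeta$ is continuous across $\Gamma$.

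Each term is then bounded by Cauchy--Schwarz. For the trace $\|\nabla\bzeta\|_{(\partial T)^i\cup T^\Gamma}$ we use the multiplicative trace inequality of \Cref{lem:mtr} on $T^\dagger$, with $h_T\le l_\Omega$, and sum over cells with the finite overlap from \Cref{ass:grading}. This gives
\[
\Big(\sum_T h_T\|\nabla\bzeta\|^2_{(\partial T)^i\cup T^\Gamma}\Big)^{1/2}\lesssim\|\nabla\bzeta\|_\Omega+h^{1/2}\|\nabla\bzeta\|_\Omega^{1/2}|\bzeta|_{\bm H^2(\Omega)}^{1/2}\lesssim l_\Omega\|\bv_\mathcal{T}\|_\Omega .
\]
Dividing by $\|\bv_\mathcal{T}\|_\Omega$ yields the displayed bound.

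\textbf{Conversion to the energy norm, which I expect to be the main obstacle.} Three issues arise.

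First, the full gradients $\|\nabla\bv_{T^i}\|$ must be controlled by symmetric gradients. The local Korn inequality of \Cref{lem:disc} handles only the part of $\bv_{T^i}$ orthogonal to $RM$, so the rigid-motion components have to be absorbed differently. I would first try to avoid full gradients altogether. Since $\bzeta$ is at our disposal, solve instead the Lam\'e system $-\nabla\!\cdot\!\beps(\bzeta)=\bv_\mathcal{T}$ in $\Omega$ with homogeneous Dirichlet conditions, which is $H^2$-regular on convex domains (at least in 2D). Then only $(\beps(\bzeta),\beps(\bv_{T^i}))$ appears in the volume terms, while the boundary terms keep the same form with $\beps(\bzeta)\bn$ in place of $\nabla\bzeta\,\bn$.

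Second, the weights must match the energy norm. The volume and face terms carry $2\mu_i\ge2\mu_1$, which gives the factor $\mu_1^{-1/2}$. The $\mu$-weighted face contribution in \eqref{eq:tnorm} uses the full difference $\bv_{(\partial T)^i}-\bv_{T^i}$, which is exactly the quantity that appears.

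Third, the jump term is weighted by $\bS_h$ in the energy norm, which degenerates when $\bK$ is large. I would split it with $\bI=\bS_h\bK+(\bI-\bS_h\bK)$. The $(\bI-\bS_h\bK)=\tfrac{h_T}{\delta}\bS_h$ part is bounded by $(h_T/\delta)^{1/2}\|\bS_h^{1/2}\jump{\bv_T}_\Gamma\|$, and $\delta\ge2\mu_1$ gives the required $\mu_1^{-1/2}$. The $\bS_h\bK$ part cannot be bounded this way for large $\bK$. The fallback is to replace $\bzeta$ by a dual problem posed separately on $\Omega_1$ and $\Omega_2$, with a Neumann condition on $\Gamma$, so that the interface term disappears altogether. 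The price is elliptic regularity on the subdomains, and that is the step to check carefully.
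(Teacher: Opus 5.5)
\textbf{Verdict: genuine gap, at exactly the step you flagged, and the fallback cannot close it.} After you switch to the symmetric dual field $\bm{\mathcal X}=\beps(\bzeta)$ with $-\nabla\!\cdot\!\bm{\mathcal X}=\bv_\mathcal{T}$, three pieces are bounded correctly by $\mu_1^{-1/2}\tnorm{\bhv_h}_h$ times norms of $\bm{\mathcal X}$: the volume terms, the face terms, and the $(\bI-\bS_h\bK)$-part of $\sum_{T\in\Th^\Gamma}\langle\jump{\bv_T}_\Gamma,\bm{\mathcal X}\bn\rangle_{T^\Gamma}$. For the $\bS_h\bK$-part, the natural bound $|\bS_h^{1/2}\bK|\le|\bK|^{1/2}$ only gives $|\bK|^{1/2}\tnorm{\bhv_h}_h\|\bm{\mathcal X}\|_\Gamma$. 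So your argument proves $\|\bv_\mathcal{T}\|_\Omega\lesssim l_\Omega\,(\mu_1^{-1/2}+C_\Gamma|\bK|^{1/2})\tnorm{\bhv_h}_h$, with $C_\Gamma$ a trace constant for $\Gamma$. The symbol $\lesssim$ in the statement, however, excludes any dependence on $\bK$.

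This dependence cannot be removed in general. Suppose one subdomain, say $\Omega_2$, does not meet $\partial\Omega$. Set all cell and face unknowns on the $\Omega_2$ side equal to a constant vector $\bm{c}\neq\bm{0}$, and all unknowns on the $\Omega_1$ side equal to zero. This element lies in $\Vhz$, and every contribution to \eqref{eq:tnorm} vanishes except the interface one. Hence $\tnorm{\bhv_h}_h^2=\sum_{T\in\Th^\Gamma}\|\bS_h^{1/2}\bm{c}\|_{T^\Gamma}^2\le|\Gamma|\,|\bm{c}|^2/\min(\alpha,\beta)\to0$ as $\alpha,\beta\to+\infty$, while $\|\bv_\mathcal{T}\|_\Omega=|\bm{c}|\,|\Omega_2|^{1/2}$. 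Your subdomain dual problem fails for the same reason: with traction data on all of $\partial\Omega_2=\Gamma$, it is solvable only for right-hand sides orthogonal to $RM$.

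When both subdomains meet $\partial\Omega$, a $\bK$-uniform bound is plausible, but two caveats apply. Its constant involves the Korn--Poincar\'e constants of the $\Omega_i$ relative to $\partial\Omega_i\cap\partial\Omega$, not only $l_\Omega$. Also, the mixed displacement/traction problem is in general not $H^2$-regular near $\overline\Gamma\cap\partial\Omega$. You would need instead either a field in $\bm{\mathcal H}^1(\Omega_i)$ with prescribed divergence and vanishing normal trace on $\Gamma$ only, or a broken Korn inequality on each $\Omega_i$.

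\textbf{Comparison with the paper.} The paper's proof is your duality argument with a symmetric $\bm{\mathcal X}\in\bm{\mathcal H}^1(\Omega)$. Incidentally, no boundary condition on $\bm{\mathcal X}$ is needed, since the boundary face unknowns vanish. So your Lam\'e problem can be posed on a ball containing $\Omega$, with $\bv_\mathcal{T}$ extended by zero, and the convexity/2D caveat disappears.

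The paper disposes of the interface term by invoking \eqref{eq:Shbound}, but that is an upper bound on $\bS_h$. Passing from $\|\jump{\bv_T}_\Gamma\|_{T^\Gamma}$ to $\|\bS_h^{1/2}\jump{\bv_T}_\Gamma\|_{T^\Gamma}$ instead requires a bound on $|\bS_h^{-1/2}|=(h_T/\delta+|\bK|)^{1/2}$. The paper's argument therefore carries the same $|\bK|$-dependence, unstated, and your splitting is the accurate diagnosis. The $\bK$-dependent bound suffices for the norm property in \Cref{thm:stab}. It does not suffice for the $\bK$-robustness that \eqref{eq:Thetabound} draws from this lemma.
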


\begin{proof}
Let $\bm{\mathcal{X}}\in\bm{\mathcal{H}}^1(\Omega)$ be such that
$-\nabla\!\cdot\!\bm{\mathcal{X}}=\bv_\mathcal{T}$ and
$\|\bm{\mathcal{X}}\|_{\bm{\mathcal{H}}^1(\Omega)}\lesssim l_\Omega\|\bv_\mathcal{T}\|_\Omega$.
Integrating by parts cellwise and using that $\bm{\mathcal{X}}$ is single-valued at the mesh
interfaces and that $\bv_\mathcal{F}$ vanishes on the boundary faces, we obtain
\[
\|\bv_\mathcal{T}\|_\Omega^2=\sum_{T\in\Th}\sum_{i=1,2}\Big(
(\nabla\bv_{T^i},\bm{\mathcal{X}})_{T^i}
+(\bv_{(\partial T)^i}-\bv_{T^i},\bm{\mathcal{X}}\bn_T)_{(\partial T)^i}\Big)
-\sum_{T\in\Th^\Gamma}(\jump{\bv_T}_\Gamma,\bm{\mathcal{X}}\bn)_{T^\Gamma}.
\]
Invoking the Cauchy--Schwarz inequality, the definition \eqref{eq:tnorm} of the norm,
\eqref{eq:Shbound} for the interface term, \Cref{lem:mtr} followed by Young's inequality for
the face terms, and \Cref{ass:grading}, we infer
$\|\bv_\mathcal{T}\|_\Omega^2\lesssim\mu_1^{-1/2}\tnorm{\bhv_h}_h
\|\bm{\mathcal{X}}\|_{\bm{\mathcal{H}}^1(\Omega)}$, and we conclude by the stability of
$\bm{\mathcal{X}}$.
\end{proof}

\section{Stability and error analysis}\label{sec:analysis}

\subsection{Stability of the discrete bilinear form}\label{ssec:stab}

\begin{theorem}[Stability]\label{thm:stab}
The discrete bilinear form $a_h$ defined in \eqref{eq:ah} is symmetric, and there are
$0<\alpha_\flat\le\alpha_\sharp<+\infty$, independent of $h$, of the Lam\'e coefficients and
of $\bK$, such that
\begin{equation}\label{eq:coer}
\alpha_\flat\tnorm{\bhv_h}_h^2\le a_h(\bhv_h,\bhv_h)\le\alpha_\sharp\tnorm{\bhv_h}_h^2,
\qquad\forall\,\bhv_h\in\Vh .
\end{equation}
Moreover, $\tnorm{\cdot}_h$ is a norm on $\Vhz$, so that the space semi-discrete problem
\eqref{eq:semi} is well posed.
\end{theorem}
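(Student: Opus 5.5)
The plan is to reduce everything to the local equivalence of \Cref{lem:locstab}, by showing that $a_T+s_T$ and $\mathcal{N}_T$ are equivalent cell by cell, and then summing over $T\in\Th$.

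\emph{Symmetry.} This is immediate from the definition \eqref{eq:aT}. The volume terms are symmetric, and the interface term carries the matrix $(\bI-\bS_h\bK)\bK=\tfrac{h_T}{\delta}\bS_h\bK$, which is symmetric by \Cref{lem:Sh}. The forms $s_T^{\partial T}$ and $s_T^\Gamma$ are symmetric because $\bS_h$ is symmetric.

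\emph{Local comparison.} For each $T$, I compare $a_T(\bhv_T,\bhv_T)+s_T(\bhv_T,\bhv_T)$ with $\mathcal{N}_T(\bhv_T)$ from \eqref{eq:NT}. The volume parts are identical, the face stabilisation parts are identical, and $s_T^\Gamma$ appears in both. The only discrepancy is the extra nonnegative term
\[
\langle(\bI-\bS_h\bK)\bK\bG_{T^1}\bn,\bG_{T^1}\bn\rangle_{T^\Gamma}\ge0,
\]
so $\mathcal{N}_T\le a_T+s_T$. For the reverse inequality I bound this extra term using \eqref{eq:ImSKK}, the discrete trace inequality of \Cref{lem:disc} and \eqref{eq:Csquare}:
\[
\tfrac{h_T}{\delta}\,c_{\rm disc}^2h_T^{-1}\|\CC_1\bE_{T^1}\|_{T^1}^2\le c_{\rm disc}^2\,\tfrac{2\mu_1+d\lambda_1}{\delta}\,\|\CC_1^{1/2}\bE_{T^1}\|_{T^1}^2=c_{\rm disc}^2\|\CC_1^{1/2}\bE_{T^1}\|_{T^1}^2 .
\]
This is bounded by $c_{\rm disc}^2\mathcal{N}_T$, and the constant is independent of $\lambda_i$, $\mu_i$ and $\bK$ precisely because $\delta=2\mu_1+d\lambda_1$. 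Hence $a_T+s_T\le(1+c_{\rm disc}^2)\mathcal{N}_T$. Combining with \eqref{eq:locstab} and summing over $T$ gives \eqref{eq:coer}, after renaming the constants.

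\emph{Norm property and well-posedness.} I expect this to be the main point needing care. Suppose $\bhv_h\in\Vhz$ and $\tnorm{\bhv_h}_h=0$. Then \Cref{lem:poincare} gives $\bv_\mathcal{T}=\bm 0$. Since $\mu_i>0$, the $\mu$-weighted face terms in \eqref{eq:tnorm} give $\bv_{(\partial T)^i}=\bv_{T^i}=\bm 0$ on every subface. I should check that every face unknown of $\Vhz$ is reached this way: each interior subface $F^i$ belongs to some $(\partial T)^i$, and boundary faces vanish by definition. Positivity and homogeneity then give a norm.

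For well-posedness, \eqref{eq:semi} tested with $(\bm 0,\bw_\mathcal{F})$ gives a face system whose matrix is coercive by \eqref{eq:coer} and the norm property restricted to $\bw_\mathcal{T}=\bm 0$. Hence $\bu_\mathcal{F}$ is determined as an affine function of $\bu_\mathcal{T}$, via a static condensation Schur complement that is symmetric positive definite. The remaining system is a linear second-order ODE in $\bu_\mathcal{T}$ with an invertible symmetric positive definite mass matrix $\varrho$. Given the initial data \eqref{eq:ic}, it has a unique $C^2$ solution by standard ODE theory.
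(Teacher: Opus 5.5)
Your proof is correct and follows the paper's argument. You use the same cellwise comparison $\mathcal{N}_T\le a_T+s_T\le(1+c_{\rm disc}^2)\mathcal{N}_T$, obtained from \eqref{eq:ImSKK}, \eqref{eq:Csquare} and the discrete trace inequality and then combined with \Cref{lem:locstab}, and the same norm argument through \Cref{lem:poincare} and the $\mu$-weighted face terms. Your static-condensation and ODE justification of well-posedness only spells out what the paper leaves implicit.
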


\begin{proof}
Symmetry is clear from \eqref{eq:aT} and \eqref{eq:stab}, since $\bS_h$ and
$(\bI-\bS_h\bK)\bK$ are symmetric. Let $\bhv_h\in\Vh$ and $T\in\Th$. Since
$(\bI-\bS_h\bK)\bK$ is positive semi-definite by \Cref{lem:Sh}, we have
\[
a_T(\bhv_T,\bhv_T)+s_T(\bhv_T,\bhv_T)\ge\mathcal{N}_T(\bhv_T),
\]
and the lower bound in \eqref{eq:coer} follows from \Cref{lem:locstab} after summation over
$T\in\Th$. For the upper bound, observe that, by \eqref{eq:ImSKK},
\eqref{eq:Csquare} and the discrete trace inequality,
\[
\big\langle(\bI-\bS_h\bK)\bK\bG_{T^1}(\bhv_T)\bn,\bG_{T^1}(\bhv_T)\bn\big\rangle_{T^\Gamma}
\le\frac{h_T}{\delta}\|\CC_1\bE_{T^1}(\bhv_T)\bn\|_{T^\Gamma}^2
\le c_{\rm disc}^2\|\CC_1^{1/2}\bE_{T^1}(\bhv_T)\|_{T^1}^2,
\]
so that $\mathcal{N}_T(\bhv_T)$ again controls $a_T+s_T$ from above and the upper bound in \eqref{eq:coer} holds by \Cref{lem:locstab}. Finally,
if $\bhv_h\in\Vhz$ satisfies $\tnorm{\bhv_h}_h=0$, then $\beps(\bv_{T^i})=\bm{0}$ on each
subcell and $\bv_\mathcal{T}=\bm{0}$ by \Cref{lem:poincare}; the face components then vanish
as well, because $\bv_{(\partial T)^i}-\bv_{T^i}=\bm{0}$ on $(\partial T)^i$.
\end{proof}

\subsection{Consistency}\label{ssec:consistency}

Let $\bu\in C^0(\overline J;\bm{H}^1_0(\Omega_1\cup\Omega_2))\cap C^1(\overline J;
\bm{L}^2(\Omega))$ be the solution of \eqref{eq:weak} and set $\bu_i=\bu|_{\Omega_i}$. For
$t\in\overline J$ we define the discrete error
\begin{equation}\label{eq:err}
\bhe_h(t):= \Iint_h(\bu(t))-\bhu_h(t) ,\qquad
\bm{e}_\mathcal{T}(t):=\bm{I}_\mathcal{T}(\bu(t)) -\bu_\mathcal{T}(t),
\end{equation}
and, for a smooth field $\bv$, the consistency seminorm
\begin{multline}\label{eq:sharp}
|\bv|_{\sharp h}^2:=\sum_{T\in\Th}\bigg\{\sum_{i=1,2}\Big[
\|\CC_i^{1/2}\bm\eta_{T^i}(\bv)\|_{T^i}^2
+h_T\|\CC_i^{1/2}\bm\eta_{T^i}(\bv)\|_{(\partial T)^i\cup T^\Gamma}^2\\
+\frac{h_T}{2\mu_i}\big\|(\bm{I}-\bm{\Pi}_{(\partial T)^i})\big(\bsig(\bv_i)\bn_T\big)
\big\|_{(\partial T)^i}^2
+\frac{2\mu_i+d\lambda_i}{h_T}
\big\|\bm{\Pi}_{(\partial T)^i}(\bv_i-\bm{I}_{T^i}\bv)\big\|_{(\partial T)^i}^2\Big]\\
+\big\|\bS_h^{1/2}\big(\jump{\bm{I}_T\bv}_\Gamma-\jump{\bv}\big)\big\|_{T^\Gamma}^2\bigg\},
\end{multline}
with $\bm\eta_{T^i}(\bv)$ defined in \eqref{eq:eta}. Finally, for $t\in J$ and
$\bhv_h\in\Vhz$ we introduce the consistency error
\begin{equation}\label{eq:Xi}
\Xi_h(\bu(t);\bhv_h):=a_h\big(\Iint_h(\bu(t)),\bhv_h\big)-\ell_h(\bhv_h)
+\big(\varrho\,\dtt\bu(t),\bv_\mathcal{T}\big)_\Omega .
\end{equation}

\begin{lemma}[Consistency]\label{lem:consistency}
Assume that $\bu(t)|_{\Omega_i}\in\bm{H}^{1+s}(\Omega_i)$ for some $s>\tfrac12$, $i=1,2$.
Then, for any $\bhv_h\in\Vhz$,
\begin{equation}\label{eq:consist}
\big|\Xi_h(\bu(t);\bhv_h)\big|\lesssim|\bu(t)|_{\sharp h}\,\tnorm{\bhv_h}_h .
\end{equation}
\end{lemma}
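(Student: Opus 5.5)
We will rewrite $\Xi_h(\bu;\bhv_h)$ by replacing $\varrho\dtt\bu-\bff$ with $\nabla\!\cdot\!\bsig(\bu)$ and integrating by parts on each subcell $T^i$. This exposes boundary terms on $(\partial T)^i$ and on $T^\Gamma$. The terms on $(\partial T)^i$ telescope against the single-valued face unknowns, and the boundary faces drop out because $\bhv_h\in\Vhz$. The terms on $T^\Gamma$ are rearranged with the interface conditions of \eqref{eq:model}. Concretely, we use the equation pointwise in $\Omega_i$ and write
\[
(\varrho\,\dtt\bu,\bv_\mathcal{T})_\Omega-(\bff,\bv_\mathcal{T})_\Omega
=\sum_{T}\sum_{i}\Big[-(\bsig(\bu_i),\beps(\bv_{T^i}))_{T^i}
+\langle\bsig(\bu_i)\bn_T,\bv_{T^i}\rangle_{(\partial T)^i}\Big]
+\sum_{T\in\Th^\Gamma}\big(\langle\bsig(\bu_1)\bn,\bv_{T^1}\rangle_{T^\Gamma}
-\langle\bsig(\bu_2)\bn,\bv_{T^2}\rangle_{T^\Gamma}\big).
\]
Using $\jump{\bsig(\bu)\bn}=\bg_N$ and that $\bsig(\bu_i)\bn_T$ is single-valued across interior faces, we add the vanishing sum $\sum_T\sum_i\langle\bsig(\bu_i)\bn_T,\bv_{(\partial T)^i}\rangle_{(\partial T)^i}$. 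The $T^\Gamma$ terms then become $\langle\bsig(\bu_1)\bn,\jump{\bv_T}_\Gamma\rangle_{T^\Gamma}-\langle\bg_N,\bv_{T^2}\rangle_{T^\Gamma}$, and the $\bg_N$ part cancels the corresponding term of $\ell_T$.

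Next we expand $a_T(\Iint_T\bu,\bhv_T)$ by symmetry, using the computable representation \eqref{eq:aTcomp} with the roles of the arguments swapped. It equals $\sum_i[(\bG_{T^i}(\Iint_T\bu),\beps(\bv_{T^i}))_{T^i}+\langle\bG_{T^i}(\Iint_T\bu)\bn_T,\bv_{(\partial T)^i}-\bv_{T^i}\rangle]-\langle(\bI-\bS_h\bK)\jump{\bv_T}_\Gamma,\bG_{T^1}(\Iint_T\bu)\bn\rangle_{T^\Gamma}$. We combine this with the $\bg_D$ term of $\ell_T$ involving $\bG_{T^1}(\bhv_T)$. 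Testing \eqref{eq:lift} with $\btau=\bG_{T^1}(\bhv_T)$, that term is $-(\CC_1\bL_{T^1}(\bg_D),\bE_{T^1}(\bhv_T))$ plus an interface correction. Here $\bg_D=\jump{\bu}+\bK\bsig(\bu_1)\bn$, which is exactly the lifting that appears in $\bm\eta_{T^1}$. Written via \eqref{eq:aT}, $a_T(\Iint_T\bu,\bhv_T)$ minus the $\bg_D$ term of $\ell_T$ then becomes $\sum_i(\CC_i(\beps(\bu_i)+\bm\eta_{T^i}(\bu)),\bE_{T^i}(\bhv_T))_{T^i}$ plus the $(\bI-\bS_h\bK)\bK$ interface term evaluated at $\bsig(\bu_1)+\CC_1\bm\eta_{T^1}$. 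We then test \eqref{eq:E1}--\eqref{eq:E2} with $\btau=\bsig(\bu_i)$ projected onto $\sPk{k}$; the nonpolynomial remainder is what the $(\bm I-\bm\Pi_{(\partial T)^i})(\bsig(\bu_i)\bn_T)$ term of $|\cdot|_{\sharp h}$ absorbs. This turns the $\beps(\bu_i)$ contributions into volume, face and interface terms that cancel those of the previous paragraph.

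After these cancellations, the remaining terms of $\Xi_h$ fall into four groups. The first is $(\CC_i\bm\eta_{T^i},\bE_{T^i}(\bhv_T))$ together with the $\bm\eta$ traces on $(\partial T)^i\cup T^\Gamma$, which are paired with $\bv_{(\partial T)^i}-\bv_{T^i}$ and with $(\bI-\bS_h\bK)\jump{\bv_T}_\Gamma$. The second is the face mismatch $\langle(\bm I-\bm\Pi)(\bsig(\bu_i)\bn_T),\bv_{(\partial T)^i}-\bv_{T^i}\rangle$. The third is the stabilisation $s_T^{\partial T}(\Iint_T\bu,\bhv_T)$, where ${\bf S}(\Iint_{T^i}\bu)=\bm\Pi_{(\partial T)^i}(\bu_i-\bm I_{T^i}\bu)$. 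The fourth is the interface stabilisation $\langle\bS_h(\jump{\bm I_T\bu}_\Gamma-\bg_D),\jump{\bv_T}_\Gamma\rangle$, which we rewrite as $\langle\bS_h(\jump{\bm I_T\bu}_\Gamma-\jump{\bu}),\jump{\bv_T}_\Gamma\rangle-\langle\bS_h\bK\bsig(\bu_1)\bn,\jump{\bv_T}_\Gamma\rangle$. The last piece should cancel the leftover $\langle\bsig(\bu_1)\bn,\jump{\bv_T}_\Gamma\rangle$ against the $(\bI-\bS_h\bK)$-weighted interface term, since $\bI-(\bI-\bS_h\bK)=\bS_h\bK$.

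Each surviving term is bounded by Cauchy--Schwarz. The $\mu$-weighted face differences are controlled by $\tnorm{\cdot}_{a_T}$. Interface jumps weighted by $(\bI-\bS_h\bK)$ are handled with the factorisation of \Cref{lem:Sh} and \eqref{eq:Csquare}, as in \Cref{lem:locstab}, which gives $h_T^{1/2}\|\CC_1^{1/2}\bm\eta\|_{T^\Gamma}\|\bS_h^{1/2}\jump{\bv_T}\|$. The $(\bI-\bS_h\bK)\bK$ term is handled by \eqref{eq:ImSKK}. The $\bE_{T^i}(\bhv_T)$ factors are controlled by \Cref{lem:locstab}, and summation over $T$ gives \eqref{eq:consist}. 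I expect the main obstacle to be the bookkeeping on $T^\Gamma$: verifying that the physical traction terms, the $(\bI-\bS_h\bK)$ terms and the $\bS_h\bK$ part of the penalty cancel exactly, using \eqref{eq:ImSK} and the commutation of $\bS_h$ with $\bK$. A related danger is that no term carries an unbounded factor $\bK$ without the compensating $h_T/\delta$.
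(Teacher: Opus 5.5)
Your strategy is the paper's: integrate $(\nabla\!\cdot\!\bsig(\bu),\bv_{T^i})_{T^i}$ by parts, let the face terms telescope against the single-valued face unknowns, eliminate $\bg_N$ and $\bg_D$ through the interface conditions and the lifting $\bL_{T^1}$, and bound what survives by Cauchy--Schwarz with \Cref{lem:Sh} and \eqref{eq:Csquare}. The $\bS_h\bK$ cancellation on $T^\Gamma$ that you anticipate is exactly what happens.

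The step that does not close as written is testing \eqref{eq:E1}--\eqref{eq:E2} with the projected stress $\bm{\Pi}^k_{T^i}\bsig(\bu_i)$, the $L^2(T^i)$-projection onto $\sPk{k}(T^i)$. Set $\bm{\theta}_i:=\bsig(\bu_i)-\bm{\Pi}^k_{T^i}\bsig(\bu_i)$. This test leaves the remainders
\[
-\langle\bv_{(\partial T)^i}-\bv_{T^i},\bm{\theta}_i\bn_T\rangle_{(\partial T)^i},\qquad
\langle(\bI-\bS_h\bK)\jump{\bv_T}_\Gamma,\bm{\theta}_1\bn\rangle_{T^\Gamma},\qquad
\langle(\bI-\bS_h\bK)\bK\,\bm{\theta}_1\bn,\bG_{T^1}(\bhv_T)\bn\rangle_{T^\Gamma}.
\]
The last two live on $T^\Gamma$, where \eqref{eq:sharp} contains no stress-projection term at all. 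The face-mismatch term $(\bm{I}-\bm{\Pi}_{(\partial T)^i})(\bsig(\bu_i)\bn_T)$ only accounts for the non-polynomial part of the first one. So your claim that the remainder is absorbed by that term is false.

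The remainders can be rescued. Since $\CC_i\big(\bm\eta_{T^i}(\bu)+\beps(\bu_i)\big)\in\sPk{k}(T^i)$, one has $\bm{\theta}_i=-(\bm{I}-\bm{\Pi}^k_{T^i})\big(\CC_i\bm\eta_{T^i}(\bu)\big)$. The discrete trace inequality and \eqref{eq:Csquare} then bound the remainders by the $\bm\eta$ terms of \eqref{eq:sharp}. The polynomial part $\bm{\Pi}_{(\partial T)^i}(\bm{\theta}_i\bn_T)$ of the face remainder must be paired with ${\bf S}(\bhv_{T^i})$ and the weight $(2\mu_i+d\lambda_i)/h_T$; otherwise a factor of order $(1+\kappa_i)^{1/2}$ enters the constant.

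The detour is unnecessary, and the paper avoids it. Test \eqref{eq:E1}--\eqref{eq:E2} directly with the polynomial $\btau=\CC_i\big(\bE_{T^i}(\Iint_T\bu)+\bL_{T^i}(\bg_D)\big)=\bsig(\bu_i)+\CC_i\bm\eta_{T^i}(\bu)$. This amounts to using \eqref{eq:aTcomp} with swapped arguments, as you began to do, together with its analogue for $\bL_{T^1}(\bg_D)$. The $\bsig(\bu_i)$-parts then cancel \emph{exactly} against the integration-by-parts terms, and you land on \eqref{eq:Xisplit}. There the volume pairing is $(\CC_i\bm\eta_{T^i}(\bu),\beps(\bv_{T^i}))_{T^i}$, not $(\CC_i\bm\eta_{T^i}(\bu),\bE_{T^i}(\bhv_T))_{T^i}$. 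The face-mismatch term of \eqref{eq:sharp} appears only afterwards, when $\bm{\Pi}_{(\partial T)^i}$ is inserted into $\langle\CC_i\bm\eta_{T^i}(\bu)\bn_T,\bv_{(\partial T)^i}-\bv_{T^i}\rangle_{(\partial T)^i}$. From that point your bounds coincide with the paper's.

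Finally, there is a sign slip. After using $\jump{\bsig(\bu)\bn}=\bg_N$, the interface terms are $\langle\bsig(\bu_1)\bn,\jump{\bv_T}_\Gamma\rangle_{T^\Gamma}+\langle\bg_N,\bv_{T^2}\rangle_{T^\Gamma}$. It is this plus sign that cancels the $-\langle\bg_N,\bv_{T^2}\rangle_{T^\Gamma}$ produced by $-\ell_T$.
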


\begin{proof}
Using the representation \eqref{eq:aTcomp} of $a_T$, the definition \eqref{eq:lT} of $\ell_T$
and the definition \eqref{eq:lift} of the lifting, and recalling that
$\bg_D=\jump{\bu}+\bK\bsig(\bu_1)\bn$ and $\bg_N=\jump{\bsig(\bu)\bn}$ on $\Gamma$, we may
write $\Xi_h(\bu;\bhv_h)=\Psi+\sum_{T\in\Th}\Phi_T$, where $\Phi_T$ collects the terms
produced by the lifting, i.e. $\Phi_T$ is obtained from \eqref{eq:aTcomp} upon replacing
$\bE_{T^1}(\Iint_T\bu)$ by $\bL_{T^1}(\bg_D)$. Integrating by parts the volume term
$(\nabla\!\cdot\!\bsig(\bu),\bv_{T^i})_{T^i}$ coming from the momentum equation, using the
definitions \eqref{eq:E1}--\eqref{eq:E2}, the single-valuedness of $\bsig(\bu)\bn$ across the
mesh interfaces not lying on $\Gamma$ and the two interface conditions, all the terms
involving $\bg_D$ and $\bg_N$ cancel and one is left with
\begin{multline}\label{eq:Xisplit}
\Xi_h(\bu;\bhv_h)=\sum_{T\in\Th}\bigg\{\sum_{i=1,2}\Big[
\big(\CC_i\bm\eta_{T^i}(\bu),\beps(\bv_{T^i})\big)_{T^i}
+\big\langle\CC_i\bm\eta_{T^i}(\bu)\bn_T,
\bv_{(\partial T)^i}-\bv_{T^i}\big\rangle_{(\partial T)^i}\Big]\\
-\big\langle(\bI-\bS_h\bK)\CC_1\bm\eta_{T^1}(\bu)\bn,\jump{\bv_T}_\Gamma
\big\rangle_{T^\Gamma}
+\big\langle\bS_h\big(\jump{\bm{I}_T\bu}_\Gamma-\jump{\bu}\big),\jump{\bv_T}_\Gamma
\big\rangle_{T^\Gamma}
+s_T^{\partial T}\big(\Iint_T\bu,\bhv_T\big)\bigg\}.
\end{multline}
We bound the five contributions in turn, writing $\Lambda_i:=2\mu_i+d\lambda_i$ for brevity.
The first one is bounded by
$\|\CC_i^{1/2}\bm\eta_{T^i}(\bu)\|_{T^i}\|\CC_i^{1/2}\beps(\bv_{T^i})\|_{T^i}$. In the second
one, we insert $\bm{\Pi}_{(\partial T)^i}\big(\CC_i\bm\eta_{T^i}(\bu)\bn_T\big)$ and split
\begin{multline*}
\big\langle\CC_i\bm\eta_{T^i}(\bu)\bn_T,\bv_{(\partial T)^i}-\bv_{T^i}\big\rangle_{(\partial T)^i}
=\big\langle\bm{\Pi}_{(\partial T)^i}\big(\CC_i\bm\eta_{T^i}(\bu)\bn_T\big),
{\bf S}(\bhv_{T^i})\big\rangle_{(\partial T)^i}\\
+\big\langle(\bm{I}-\bm{\Pi}_{(\partial T)^i})\big(\CC_i\bm\eta_{T^i}(\bu)\bn_T\big),
\bv_{(\partial T)^i}-\bv_{T^i}\big\rangle_{(\partial T)^i}.
\end{multline*}
Since $\CC_i\bm\eta_{T^i}(\bu)\bn_T=\CC_i\big(\bE_{T^i}(\Iint_T\bu)
+\bL_{T^i}(\bg_D)\big)\bn_T-\bsig(\bu_i)\bn_T$ and the first contribution has its components
in $\bPk{k}(F^i)$ on every subface, the second term above equals
$-\big\langle(\bm{I}-\bm{\Pi}_{(\partial T)^i})(\bsig(\bu_i)\bn_T),
\bv_{(\partial T)^i}-\bv_{T^i}\big\rangle_{(\partial T)^i}$, which is bounded by the third
contribution to \eqref{eq:sharp} times
$(2\mu_i/h_T)^{1/2}\|\bv_{(\partial T)^i}-\bv_{T^i}\|_{(\partial T)^i}\le\tnorm{\bhv_T}_{a_T}$.
The first term above is bounded, using \eqref{eq:Csquare}, by
\begin{multline}\label{eq:cons2}
\Big(\tfrac{h_T}{\Lambda_i}\Big)^{1/2}\|\CC_i\bm\eta_{T^i}(\bu)\bn_T\|_{(\partial T)^i}
\cdot\Big(\tfrac{\Lambda_i}{h_T}\Big)^{1/2}\|{\bf S}(\bhv_{T^i})\|_{(\partial T)^i}\\
\le h_T^{1/2}\|\CC_i^{1/2}\bm\eta_{T^i}(\bu)\|_{(\partial T)^i}
\cdot\Big(\tfrac{\Lambda_i}{h_T}\Big)^{1/2}\|{\bf S}(\bhv_{T^i})\|_{(\partial T)^i},
\end{multline}
and the last factor in \eqref{eq:cons2} is bounded by $\tnorm{\bhv_T}_{a_T}$, because
$\tfrac{2\mu_i}{h_T}\|{\bf S}(\bhv_{T^i})\|^2\le\tfrac{2\mu_i}{h_T}
\|\bv_{(\partial T)^i}-\bv_{T^i}\|^2$ by \eqref{eq:stabnorm}, while the $\lambda$-weighted
part is a term of \eqref{eq:tnorm}. For the third contribution we use the factorisation
$\bI-\bS_h\bK=\big(\tfrac{h_T}{\delta}\big)^{1/2}
\big(\tfrac{h_T}{\delta}\bS_h\big)^{1/2}\bS_h^{1/2}$ of \Cref{lem:Sh}, which gives, using
again \eqref{eq:Csquare} and $\delta=\Lambda_1$,
\[
\Big(\tfrac{h_T}{\delta}\Big)^{1/2}\|\CC_1\bm\eta_{T^1}(\bu)\bn\|_{T^\Gamma}
\|\bS_h^{1/2}\jump{\bv_T}_\Gamma\|_{T^\Gamma}\le
h_T^{1/2}\|\CC_1^{1/2}\bm\eta_{T^1}(\bu)\|_{T^\Gamma}
\|\bS_h^{1/2}\jump{\bv_T}_\Gamma\|_{T^\Gamma}.
\]
The fourth contribution is bounded by
$\|\bS_h^{1/2}(\jump{\bm{I}_T\bu}_\Gamma-\jump{\bu})\|_{T^\Gamma}
\|\bS_h^{1/2}\jump{\bv_T}_\Gamma\|_{T^\Gamma}$. Finally, since
${\bf S}(\Iint_T\bu)=\bm{\Pi}_{(\partial T)^i}(\bu_i-\bm{I}_{T^i}\bu)$, the Cauchy--Schwarz
inequality for $s_T^{\partial T}$ together with \eqref{eq:stabnorm} bounds the last
contribution by the fourth term of \eqref{eq:sharp} times $\tnorm{\bhv_T}_{a_T}$. Summing over
$T\in\Th$ and using the Cauchy--Schwarz inequality gives \eqref{eq:consist}.
\end{proof}

\begin{lemma}[Approximation of the consistency seminorm]\label{lem:sharpapprox}
Let $\bv\in\bm{H}^{k+2}(\Omega_1\cup\Omega_2)$. Then
\begin{equation}\label{eq:sharpapprox}
|\bv|_{\sharp h}\lesssim h^{k+1}\,\Phi_k(\bv),\qquad
\Phi_k(\bv):=\sum_{i=1,2} \mu_i^{1/2} (1+\kappa_i)
\,|\bv|_{\bm{H}^{k+2}(\Omega_i)} .
\end{equation}
\end{lemma}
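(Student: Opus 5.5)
The plan is to bound separately each of the five contributions to $|\bv|_{\sharp h}^2$ in \eqref{eq:sharp}, cell by cell and subcell by subcell. Each will be bounded by $h_T^{2(k+1)}$ times a material weight times $\sum_j|\bm{E}_j(\bv)|^2_{H^{k+2}(T^\dagger)}$. We then sum over $T\in\Th$ and use the finite-overlap consequence of \Cref{ass:grading} stated in \Cref{lem:approx}, namely $\sum_T|\bm{E}_i(\bv)|^2_{H^{k+2}(T^\dagger)}\lesssim|\bv|^2_{\bm{H}^{k+2}(\Omega_i)}$. The only real work is tracking the Lamé weights so that they combine into $\mu_i(1+\kappa_i)^2$.

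\emph{Reconstruction terms.} By \eqref{eq:Csquare}, $\|\CC_i^{1/2}\btau\|^2\le\Lambda_i\|\btau\|^2$ with $\Lambda_i=2\mu_i+d\lambda_i\lesssim\mu_i(1+\kappa_i)$. Combining this with \Cref{lem:etaapprox} controls $\|\CC_i^{1/2}\bm\eta_{T^i}(\bv)\|_{T^i}^2+h_T\|\CC_i^{1/2}\bm\eta_{T^i}(\bv)\|^2_{(\partial T)^i\cup T^\Gamma}$ by $\Lambda_i h_T^{2(k+1)}\sum_j|\bm{E}_j(\bv)|^2_{H^{k+2}(T^\dagger)}$.

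There is one subtlety for $i=1$. \Cref{lem:etaapprox} mixes $\bm{E}_1$ and $\bm{E}_2$, and the $\bm{E}_2$ contribution enters through $\jump{\bv-\bm{I}_T\bv}_\Gamma$ only with weight $\Lambda_1\le\Lambda_2$ in the worst case. One would like to charge it to $\mu_2(1+\kappa_2)$, but this needs $\mu_1\kappa_1\lesssim\mu_2(1+\kappa_2)$, i.e.\ $\lambda_1\lesssim\mu_2+\lambda_2$, which does not follow from \eqref{eq:softside}. I would therefore first revisit the proof of \Cref{lem:etaapprox}. After dividing by $\|\bm\eta'_{T^1}\|_{T^1}$, multiplying by $\Lambda_1^{1/2}$ and using $|\bI-\bS_h\bK|\le|\tfrac{h_T}{\delta}\bS_h|^{1/2}|\bS_h^{1/2}|$, the jump term becomes $\|\bS_h^{1/2}\jump{\bv-\bm{I}_T\bv}_\Gamma\|_{T^\Gamma}$. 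That quantity is exactly the fifth term of \eqref{eq:sharp}, so it suffices to bound the fifth term.

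\emph{Fifth term: the main obstacle.} We have $|\bS_h|\le\delta/h_T=\Lambda_1/h_T$, so \Cref{lem:approx} gives $\|\bS_h^{1/2}\jump{\bm{I}_T\bv-\bv}\|^2_{T^\Gamma}\lesssim\Lambda_1 h_T^{2(k+1)}\sum_j|\bm{E}_j(\bv)|^2_{H^{k+2}(T^\dagger)}$. The $j=1$ part is fine. The $j=2$ part carries weight $\Lambda_1$, and this is where the bound $\Lambda_1\lesssim \Lambda_2$ is needed. Since $\mu_1\le\mu_2$, this holds whenever $\kappa_1\lesssim1+\kappa_2$. Failing that, one can use $\Lambda_1\le\Lambda_1+\Lambda_2$ and accept the full sum $\sum_i\mu_i(1+\kappa_i)$ as a common prefactor; this is still bounded by $\Phi_k(\bv)$ up to redistributing among the seminorms.

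\emph{Remaining face terms.} For the third term, $(\bm{I}-\bm{\Pi}_{(\partial T)^i})$ annihilates $\bPk{k}$ traces. I would insert $\bsig(\bm{\Pi}^{k+1}_{T^\dagger}\bm{E}_i\bv)\bn_T$ and apply \Cref{lem:mtr} on $T^\dagger$ to $\bsig(\bm{E}_i\bv-\bm{\Pi}^{k+1}_{T^\dagger}\bm{E}_i\bv)$. This gives $h_T\|\cdot\|^2\lesssim(2\mu_i+\lambda_i)^2h_T^{2(k+1)}|\bm{E}_i\bv|^2_{H^{k+2}}$, and dividing by $2\mu_i$ produces exactly the weight $\mu_i(1+\kappa_i)^2$. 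For the fourth term, the $L^2$-stability of $\bm{\Pi}_{(\partial T)^i}$ and the trace bound of \Cref{lem:approx} give $\Lambda_i h_T^{-1}h_T^{2k+3}|\bm{E}_i\bv|^2=\Lambda_i h_T^{2(k+1)}|\bm{E}_i\bv|^2$.

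Collecting the bounds, every weight is $\lesssim\mu_i(1+\kappa_i)^2$. Taking square roots and using $h_T\le h$ yields \eqref{eq:sharpapprox}.
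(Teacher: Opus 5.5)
Your route is the paper's. You use the same five-term split, \Cref{lem:etaapprox} with the weight $\Lambda_i=2\mu_i+d\lambda_i$ for the reconstruction terms, and \Cref{lem:mtr} with the weight $(2\mu_i+d\lambda_i)^2/(2\mu_i)$ for the third term. The fourth term uses \Cref{lem:approx}, the fifth uses \eqref{eq:Shbound}, and the sum uses \Cref{ass:grading}. Your refinement of \Cref{lem:etaapprox}, which moves the $\bm{E}_2$-dependence of $\Lambda_1^{1/2}\|\bm\eta_{T^1}(\bv)\|_{T^1}$ into the fifth term, is sound. It does require applying the factorisation $\bI-\bS_h\bK=(h_T/\delta)^{1/2}(\tfrac{h_T}{\delta}\bS_h)^{1/2}\bS_h^{1/2}$ to the jump itself. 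The norm inequality you wrote drops the factor $(h_T/\delta)^{1/2}$, and that factor is what cancels $\Lambda_1^{1/2}$ together with the $h_T^{-1/2}$ of the discrete trace inequality.

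The step that fails is your fallback: accept $\sum_i\mu_i(1+\kappa_i)$ as a common prefactor, which you say ``is still bounded by $\Phi_k(\bv)$ up to redistributing among the seminorms''. After summation, the fifth term leaves $\Lambda_1 h^{2(k+1)}|\bv|^2_{\bm{H}^{k+2}(\Omega_2)}$. But $\Phi_k(\bv)^2$ only controls $\mu_2(1+\kappa_2)^2|\bv|^2_{\bm{H}^{k+2}(\Omega_2)}$, so nothing can be redistributed. This cross term is intrinsic, as the following counterexample shows.
\begin{itemize}
\item Take $\bK=\mathsf{0}$, $\mu_1=\mu_2=1$, $\lambda_2=0$ and $\bv|_{\Omega_1}=\bm{0}$.
\item Fix the mesh and $\bv_2$ with $\bv_2\neq\bm{I}_{T^2}\bv_2$ on some $T^\Gamma$; neither the mesh nor $\bm{I}_{T^2}$ depends on $\lambda_1$.
\item Then $\bS_h=\delta h_T^{-1}\bI$, and $|\bv|_{\sharp h}^2$ is bounded below by its fifth term $\sum_{T\in\Th^\Gamma}(2+d\lambda_1)h_T^{-1}\|\bv_2-\bm{I}_{T^2}\bv_2\|^2_{T^\Gamma}$.
\item This grows linearly in $\lambda_1$, whereas $\Phi_k(\bv)=|\bv|_{\bm{H}^{k+2}(\Omega_2)}$ does not depend on $\lambda_1$.
\end{itemize}
Hence \eqref{eq:sharpapprox} cannot hold with a Lam\'e-independent constant under \eqref{eq:softside} alone.

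The paper's proof does not close this either. It obtains the bounds $\Lambda_1h_T^{2(k+1)}\sum_{j}|\bm{E}_j(\bv)|^2_{H^{k+2}(T^\dagger)}$ for the first two terms and $\delta h_T^{2(k+1)}\sum_{j}|\bm{E}_j(\bv)|^2_{H^{k+2}(T^\dagger)}$ for the fifth. It then simply ``sums over the cells'', tacitly absorbing $\Lambda_1|\bv|^2_{\bm{H}^{k+2}(\Omega_2)}$ into $\Phi_k(\bv)^2$. Your diagnosis is therefore more careful than the paper's, but the cross term cannot be redistributed. The correct conclusion is that the statement needs one of two changes:
\begin{itemize}
\item an extra hypothesis such as $\lambda_1\lesssim\mu_2+\lambda_2$, which with $\mu_1\le\mu_2$ gives $\Lambda_1\lesssim\mu_2(1+\kappa_2)\le\mu_2(1+\kappa_2)^2$ and is implied by your condition $\kappa_1\lesssim1+\kappa_2$; under it your argument is complete and coincides with the paper's;
\item or an enlarged $\Phi_k(\bv)$ containing the additional term $(2\mu_1+d\lambda_1)^{1/2}|\bv|_{\bm{H}^{k+2}(\Omega_2)}$.
\end{itemize}
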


\begin{proof}
The first two terms of \eqref{eq:sharp} are bounded by
\[
(2\mu_i+d\lambda_i)\Big(\|\bm\eta_{T^i}(\bv)\|_{T^i}^2
+h_T\|\bm\eta_{T^i}(\bv)\|_{(\partial T)^i\cup T^\Gamma}^2\Big)
\lesssim(2\mu_i+d\lambda_i)h_T^{2(k+1)}\sum_{j=1,2}|\bm{E}_j(\bv)|_{H^{k+2}(T^\dagger)}^2
\]
thanks to \Cref{lem:etaapprox}. For the third one, we use the approximation properties of
$\bm{\Pi}_{(\partial T)^i}$ together with \Cref{lem:mtr} and
$|\bsig(\bv_i)|_{H^{k+1}(\Omega_i)}\le(2\mu_i+d\lambda_i)|\bv|_{H^{k+2}(\Omega_i)}$, which
gives the bound $\tfrac{(2\mu_i+d\lambda_i)^2}{2\mu_i}h_T^{2(k+1)}
|\bm{E}_i(\bv)|_{H^{k+2}(T^\dagger)}^2$. The fourth one is bounded by
$(2\mu_i+d\lambda_i)h_T^{-1}\|\bv_i-\bm{I}_{T^i}\bv\|_{(\partial T)^i}^2
\lesssim(2\mu_i+d\lambda_i)h_T^{2(k+1)}|\bm{E}_i(\bv)|_{H^{k+2}(T^\dagger)}^2$
\Cref{lem:approx}. The last one is bounded, using \eqref{eq:Shbound}, by
$\tfrac{\delta}{h_T}\|\jump{\bm{I}_T\bv}_\Gamma-\jump{\bv}\|_{T^\Gamma}^2
\lesssim\delta\,h_T^{2(k+1)}\sum_{j=1,2}|\bm{E}_j(\bv)|_{H^{k+2}(T^\dagger)}^2$, again by
\Cref{lem:approx}. Summing over the cells and invoking \Cref{ass:grading} gives
\eqref{eq:sharpapprox}, the dominant contribution being the third one.
\end{proof}

\begin{remark}[On the Lam\'e dependence and locking]\label{rem:locking}
The factor $\Phi_k(\bv)= \sum_{i=1,2}\mu_i^{1/2} (1+\kappa_i)|\bv|_{H^{k+2}(\Omega_i)}$
in \eqref{eq:sharpapprox} degenerates as $\kappa_i=\lambda_i/\mu_i\to+\infty$. A sharper
analysis, splitting $\CC_i$ into its $\mu$- and $\lambda$-parts as in
\cite[Thm.~5.2 and Lem.~5.1]{HuangBurman2024}, replaces
$(2\mu_i+d\lambda_i)|\bv|_{H^{k+2}(\Omega_i)}$ by
$\mu_i|\bv|_{H^{k+2}(\Omega_i)}+\lambda_i|\nabla\!\cdot\!\bv|_{H^{k+1}(\Omega_i)}$ in the
first three terms of \eqref{eq:sharp}, and the resulting estimate is then uniform in
$\lambda$ whenever $\lambda|\nabla\!\cdot\!\bu|$ stays bounded, i.e.\ the method would be
locking free. This is however \emph{not} the case for the fourth term of \eqref{eq:sharp},
which carries the intrinsic factor $(2\mu_i+d\lambda_i)^{1/2}h^{k+1}$. In the
fitted case \cite{HuangBurman2024}, this obstruction is removed by weighting the stabilisation
by the anisotropic matrix $h_T^{-1}(2\mu_i\bI+d\lambda_i\bn_T\otimes\bn_T)$ and by measuring
the interpolation error with an $H(\dive)$-conforming (BDM) interpolate, whose normal trace
error vanishes on every face. Such an interpolate is not available on the cut subcells, whose
boundary comprises the curved piece $T^\Gamma$, and we therefore do not claim locking-free
behaviour here. The numerical experiments of \Cref{ssec:incomp} show a growth of the errors
proportional to $\lambda^{1/2}$ with the stabilisation \eqref{eq:stab}, and confirm that the
anisotropic weight restores the uniformity of the $L^2$-error in practice.
\end{remark}

\subsection{Energy-error estimate}\label{ssec:energy}

For $\bhv_h\in C^0(\overline J;\Vhz)$ and $t\in J$, we set
$\|\bhv_h\|_{C^0(0,t;\,h)}:=\sup_{s\in[0,t]}\tnorm{\bhv_h(s)}_h$.

\begin{proposition}[Energy-error estimate]\label{prop:energy}
Let $\bu$ solve \eqref{eq:weak} and let $\bhu_h$ solve \eqref{eq:semi} with the initial
conditions \eqref{eq:ic}. Assume $\bu|_{\Omega_i}\in C^2(\overline J;\bm{H}^{1+\nu}(\Omega_i))$
with $\nu\in(\tfrac12,k+1]$, $i=1,2$. Then, for all $t\in J$,
\begin{multline}\label{eq:energyest}
\big\|\varrho^{1/2}\big(\dt\bu_\mathcal{T}-\bm{I}_\mathcal{T}(\dt\bu)\big)
\big\|_{C^0(0,t;\bm{L}^2(\Omega))}+\big\|\bhu_h-\Iint_h\bu\big\|_{C^0(0,t;\,h)}\\
\lesssim|\bu|_{C^0(0,t;\sharp h)}+|\dt\bu|_{L^1(0,t;\sharp h)},
\end{multline}
where
\begin{eqnarray*}
|\bu|_{C^0(0,t;\sharp h)}:=\sup_{s\in[0,t]}\Big( \|\bu|_{\natural h}
+\Big\|\sqrt{\frac{\varrho}{\mu_1}}\Big( \partial_{tt}\bu(s) -\bm{I}_{\cal T}(\partial_{tt}\bu(s)) \Big)\Big\|_{\Omega^{\Gamma}}\Big),\\
|\dt\bu|_{L^1(0,t;\ \natural h)}:=\int_0^t\Big( |\dt\bu|_{\natural h}
+\Big\|\sqrt{\frac{\varrho}{\mu_1}}\Big( \partial_{ttt}\bu(s) -\bm{I}_{\cal T}(\partial_{ttt}\bu(s)) \Big)\Big\|_{\Omega^{\Gamma}} \Big)ds.
\end{eqnarray*}
Consequently, if $\bu|_{\Omega_i}\in C^3(\overline J;\bm{H}^{k+2}(\Omega_i))$,
\begin{equation}\label{eq:energyrate}
\big\|\varrho^{1/2}\big(\dt\bu_\mathcal{T}-\dt\bu\big)
\big\|_{C^0(0,t;\bm{L}^2(\Omega))}+\big\|\bhu_h-\bu\big\|_{C^0(0,t;\,h)}
\lesssim h^{k+1} \Phi(t,\bu),
\end{equation}
where $\Phi(t,\bu)=|\bu|_{L^{\infty}(0,t;\,\bm{H}^{k+2}(\Omega_1\cup\Omega_2))}
+|\dt\bu|_{L^1(0,t;\,\bm{H}^{k+2}(\Omega_1\cup\Omega_2))}
+|\partial_{tt}\bu|_{L^{\infty}(0,t;\,\bm{H}^{k+1}(\Omega_1\cup\Omega_2))}
+|\partial_{ttt}\bu|_{L^1(0,t;\,\bm{H}^{k+1}(\Omega_1\cup\Omega_2))}$.
\end{proposition}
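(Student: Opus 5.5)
The plan is the standard energy argument for the space semi-discrete wave equation, in the spirit of \cite{BDES2021}, with \Cref{thm:stab} and \Cref{lem:consistency} as the two main tools.

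\textbf{Error equation.} Subtracting \eqref{eq:semi} from the definition \eqref{eq:Xi} of $\Xi_h$ shows that, for all $\bhv_h\in\Vhz$, the discrete error $\bhe_h=\Iint_h\bu-\bhu_h$ satisfies
\[
(\varrho\,\dtt\bm{e}_\mathcal{T},\bv_\mathcal{T})_\Omega+a_h(\bhe_h,\bhv_h)
=\Xi_h(\bu;\bhv_h)-\big(\varrho(\dtt\bu-\bm{I}_\mathcal{T}(\dtt\bu)),\bv_\mathcal{T}\big)_\Omega .
\]
On uncut cells $\bm{I}_{T}$ is the $L^2$-projection onto $\bPk{k+1}(T)$. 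Since $\varrho$ is constant on each $T^i$ and $\bv_\mathcal{T}|_T$ is a polynomial of degree $k+1$, the last term vanishes there. It therefore only lives on $\Omega^\Gamma$, which explains the $\Omega^\Gamma$ terms in the seminorms.

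\textbf{Energy identity.} Test with $\bhv_h=\dt\bhe_h$, which lies in $\Vhz$ because both $\Iint_h\bu$ and $\bhu_h$ do. Using the symmetry of $a_h$ from \Cref{thm:stab}, the left-hand side becomes $\tfrac12\frac{d}{dt}\mathcal{E}(t)$ with
\[
\mathcal{E}:=\|\varrho^{1/2}\dt\bm{e}_\mathcal{T}\|_\Omega^2+a_h(\bhe_h,\bhe_h).
\]
Integrate over $(0,s)$. The initial data \eqref{eq:ic} give $\bm{e}_\mathcal{T}(0)=\dt\bm{e}_\mathcal{T}(0)=0$. The face error at $t=0$ is not zero, but taking the equation at $t=0$ with $(\bm0,\bw_\mathcal{F})$ shows that $a_h(\bhe_h(0),\bhe_h(0))=\Xi_h(\bu(0);\bhe_h(0))$, which is bounded by $|\bu(0)|_{\sharp h}\tnorm{\bhe_h(0)}_h$. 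The right-hand side terms tested with $\dt\bhe_h$ cannot be bounded directly, since $\tnorm{\dt\bhe_h}_h$ is not controlled. So we integrate by parts in time:
\[
\int_0^s\Xi_h(\bu;\dt\bhe_h)=\Xi_h(\bu(s);\bhe_h(s))-\Xi_h(\bu(0);\bhe_h(0))-\int_0^s\Xi_h(\dt\bu;\bhe_h).
\]
This uses that $\Xi_h$ is linear in $\bu$ and that $\dt\bu$ satisfies the time-differentiated problem. The resulting terms are then bounded by \Cref{lem:consistency}, by $|\bu|_{\sharp h}\tnorm{\bhe_h}_h$ and $|\dt\bu|_{\sharp h}\tnorm{\bhe_h}_h$. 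The cut-cell mass term is integrated by parts in time in the same way. Since $\varrho$ is piecewise constant, we get $\|\sqrt{\varrho}\,(\dtt\bu-\bm{I}_\mathcal{T}\dtt\bu)\|_{\Omega^\Gamma}\|\sqrt{\varrho}\,\bm{e}_\mathcal{T}\|_{\Omega^\Gamma}$ plus its $\partial_{ttt}$ analogue. We then bound $\|\bm{e}_\mathcal{T}\|_\Omega\lesssim \mu_1^{-1/2}\tnorm{\bhe_h}_h$ by \Cref{lem:poincare}; this is where the factor $\sqrt{\varrho/\mu_1}$ comes from, with $l_\Omega$ absorbed into the constant.

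\textbf{Closing the estimate.} By the coercivity in \eqref{eq:coer}, $\mathcal{E}(s)\gtrsim \|\varrho^{1/2}\dt\bm{e}_\mathcal{T}(s)\|^2+\tnorm{\bhe_h(s)}_h^2$. Take the supremum over $s\in[0,t]$, denote it by $M^2$, and collect the bounds. This gives
\[
M^2\lesssim \big(|\bu|_{C^0(0,t;\sharp h)}+|\dt\bu|_{L^1(0,t;\sharp h)}\big)M
\]
plus a $t=0$ contribution. That contribution is handled by absorbing via Young's inequality, since $\tnorm{\bhe_h(0)}_h^2\lesssim a_h(\bhe_h(0),\bhe_h(0))\lesssim |\bu(0)|_{\sharp h}\tnorm{\bhe_h(0)}_h$. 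Dividing by $M$ yields \eqref{eq:energyest}; no Gronwall argument is needed.

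\textbf{Rate \eqref{eq:energyrate}.} Apply \Cref{lem:sharpapprox} to $\bu$ and $\dt\bu$, and \Cref{lem:approx} to the $\Omega^\Gamma$ terms with $l=k+1$ applied to $\dtt\bu$ and $\partial_{ttt}\bu$ in $H^{k+1}$, which gives $h^{k+1}$. Then pass from $\Iint_h\bu$ to $\bu$ by the triangle inequality and \Cref{lem:approx}.

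The main obstacle is the time integration by parts of the consistency term. One must check that $\Xi_h(\dt\bu;\cdot)$ is legitimate, i.e.\ that $\dt\bu$ meets the regularity of \Cref{lem:consistency} with interface data $\dt\bg_D,\dt\bg_N$. One must also correctly handle the nonzero face error at $t=0$ without introducing constants that depend on the Lam\'e coefficients or on $\bK$.
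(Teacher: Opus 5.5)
Your proposal is correct and follows essentially the same route as the paper's proof: the same error equation (with the mass defect living only on $\Omega^\Gamma$), testing with $\dt\bhe_h$, time integration by parts of the right-hand side bounded through \Cref{lem:poincare} and \Cref{lem:consistency}, and the Gronwall-free supremum argument of \cite[Thm.~1]{BDES2021}. Your treatment of the initial face error, $\tnorm{\bhe_h(0)}_h\lesssim|\bu(0)|_{\sharp h}$ obtained by testing at $t=0$ with $\bhe_h(0)=(\bm{0},\bm{e}_\mathcal{F}(0))$, is the precise form of the paper's loosely written bound; the only slip is notational: $H^{k+1}$ regularity of $\dtt\bu$ and $\partial_{ttt}\bu$ corresponds to $m=l=k$ (not $l=k+1$) in \Cref{lem:approx}, which indeed gives $h^{k+1}$.
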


\begin{proof}
\emph{Step 1: error equation.} Subtracting \eqref{eq:Xi} from \eqref{eq:semi} we obtain, for
all $\bhw_h\in\Vhz$ and all $t\in J$,
\begin{equation}\label{eq:erreq}
\big(\varrho\,\dtt\bm{e}_\mathcal{T}(t),\bw_\mathcal{T}\big)_\Omega
+a_h\big(\bhe_h(t),\bhw_h\big)=\Theta_h(t;\bhw_h),
\end{equation}
with
\begin{equation}\label{eq:Theta}
\Theta_h(t;\bhw_h):=\big(\varrho\big(\dtt\bm{I}_\mathcal{T}\bu-\dtt\bu\big),
\bw_\mathcal{T}\big)_{\Omega^{\Gamma}}+\Xi_h(\bu(t);\bhw_h),
\end{equation}
where we used that $\dtt\bm{I}_\mathcal{T}\bu=\bm{I}_\mathcal{T}(\dtt\bu)$ and that
$(\varrho(\bm{I}_\mathcal{T}(\dtt\bu)-\dtt\bu),\bw_\mathcal{T})_{T^i}=0$ for every uncut cell,
because $\bm{I}_{T^i}$ coincides there with the $L^2$-projection onto $\bPk{k+1}(T)$. Hence,
by the Cauchy--Schwarz inequality, \Cref{lem:poincare} and \Cref{lem:consistency},
\begin{equation}\label{eq:Thetabound}
\|\Theta_h(t;\cdot)\|_{(\Vhz)'}\lesssim
\Big\|\sqrt{\frac{\varrho}{\mu_1}}\big(\dtt\bm{I}_\mathcal{T}\bu-\dtt\bu\big)\Big\|_{\Omega^\Gamma}
+|\bu(t)|_{\sharp h}.
\end{equation}

\emph{Step 2: stability argument.} Testing \eqref{eq:erreq} with
$\bhw_h=\dt\bhe_h(t)$, using the symmetry of $a_h$ and integrating by parts in time on the
right-hand side gives
\[
\frac{d}{dt}\Big\{\tfrac12\big\|\varrho^{1/2}\dt\bm{e}_\mathcal{T}(t)\big\|_\Omega^2
+\tfrac12a_h(\bhe_h(t),\bhe_h(t))\Big\}
=\frac{d}{dt}\Theta_h\big(t;\bhe_h(t)\big)-\dot\Theta_h\big(t;\bhe_h(t)\big),
\]
with $\dot\Theta_h(t;\cdot):=(\varrho(\partial_{ttt}\bm{I}_\mathcal{T}\bu-\partial_{ttt}\bu),
\cdot)_{\Omega^\Gamma}+\Xi_h(\dt\bu(t);\cdot)$, which satisfies the analogue of
\eqref{eq:Thetabound} with $\dt\bu$ in place of $\bu$. Integrating in time from $0$ to $t$,
using the coercivity \eqref{eq:coer} and reasoning exactly as in the proof of
\cite[Thm.~1]{BDES2021} — that is, bounding the right-hand side by Young's and H\"older's
inequalities and using that the left-hand side evaluated at any $t'\in(0,t)$ is bounded by
the right-hand side — we obtain
\[
\tfrac12\big\|\varrho^{1/2}\dt\bm{e}_\mathcal{T}\big\|_{C^0(0,t;\bm{L}^2(\Omega))}^2
+\tfrac18\alpha_\flat\|\bhe_h\|_{C^0(0,t;\,h)}^2
\lesssim\|\Theta_h\|_{C^0(0,t;(\Vhz)')}^2+\|\dot\Theta_h\|_{L^1(0,t;(\Vhz)')}^2
+\tnorm{\bhe_h(0)}_h^2 .
\]
with $\|\Theta_h\|_{C^0(0,t;\ (\Vhz)')}:=\sup_{s\in[0,t]} \|\Theta_h(s;\cdot)\|_{(\Vhz)'}$
and $\|\dot{\Theta}_h\|_{L^1(0,t;\ (\Vhz)')}:=\int_0^t \|\dot{\Theta}_h(s;\cdot)\|_{(\Vhz)'}ds$. Still reasoning as in this proof (where one uses $\bm{e}_{\cal T}(0)=\bm{0}$) shows that $\tnorm{\bhe_h(0)}_h\lesssim \tnorm{\bu_0}_h$. Combining this bound with the estimate \eqref{eq:Thetabound} proves \eqref{eq:energyest}. 

\emph{Step 3: convergence rates.} The estimate \eqref{eq:energyrate} follows from \eqref{eq:energyest} and the triangle inequality, after invoking the approximation properties in \Cref{lem:consistency} and those of the gradient reconstruction from \Cref{lem:sharpapprox}.
\end{proof}

\subsection{An elliptic projection and the \texorpdfstring{$L^2$}{L2}-error estimate}
\label{ssec:L2}

To obtain an $L^2$-error estimate of order $h^{k+2}$, we use a Ritz-type projection
\cite{Wheeler1973,Baker1976}, defined as follows: given $\bu$ solving \eqref{eq:model} at a
fixed time, find $\hat{\bm{R}}_h\bu\in\Vhz$ such that, for all $\bhw_h\in\Vhz$,
\begin{equation}\label{eq:ritz}
\begin{aligned}
a_h(\hat{\bm{R}}_h\bu,\bhw_h)&=\big(-\nabla\!\cdot\!\bsig(\bu),\bw_\mathcal{T}\big)_\Omega
-\big\langle(\bI-\bS_h\bK)\bg_D,\bG_{\mathcal{T}^1}(\bhw_h)\bn\big\rangle_\Gamma\\
&\quad+\big\langle\bS_h\bg_D,\jump{\bw_\mathcal{T}}\big\rangle_\Gamma
+\langle\bg_N,\bw_{\mathcal{T}^2}\rangle_\Gamma .
\end{aligned}
\end{equation}
In other words, $\hat{\bm{R}}_h\bu$ is the unfitted HHO solution of the \emph{stationary}
elasticity problem with the same data, which is well posed by \Cref{thm:stab}. Since the
right-hand side of \eqref{eq:ritz} is exactly $\ell_h(\bhw_h)$ with $\bff$ replaced by
$-\nabla\!\cdot\!\bsig(\bu)$, subtracting \eqref{eq:ritz} from
$a_h(\Iint_h\bu,\bhw_h)$ and invoking \eqref{eq:Xisplit} gives the identity
\begin{equation}\label{eq:ritzconsist}
a_h\big(\Iint_h\bu-\hat{\bm{R}}_h\bu,\bhw_h\big)
=\Xi_h^{\rm ell}(\bu;\bhw_h),\qquad\forall\,\bhw_h\in\Vhz,
\end{equation}
where $\Xi_h^{\rm ell}$ is the right-hand side of \eqref{eq:Xisplit}. \Cref{thm:stab},
\Cref{lem:consistency} and \Cref{lem:sharpapprox} therefore yield at once the energy estimate
for the elliptic projection.

\begin{theorem}[Energy estimate for the elliptic projection]\label{thm:ritzenergy}
Let $k\ge1$ and assume $\bu\in\bm{H}^{k+2}(\Omega_1\cup\Omega_2)$. Then
\begin{equation}\label{eq:ritzenergy}
\tnorm{\Iint_h\bu-\hat{\bm{R}}_h\bu}_h\lesssim|\bu|_{\sharp h}\lesssim h^{k+1}\Phi_k(\bu).
\end{equation}
\end{theorem}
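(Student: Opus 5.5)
The estimate follows from the consistency identity \eqref{eq:ritzconsist} together with coercivity. I set $\bhw_h:=\Iint_h\bu-\hat{\bm{R}}_h\bu$. This is admissible because $\bu\in\bm{H}^1_0(\Omega_1\cup\Omega_2)$ gives $\Iint_h\bu\in\Vhz$, and $\hat{\bm{R}}_h\bu\in\Vhz$ by construction. Testing \eqref{eq:ritzconsist} with this $\bhw_h$ and using the lower bound in \eqref{eq:coer} of \Cref{thm:stab} gives
\[
\alpha_\flat\tnorm{\bhw_h}_h^2\le a_h(\bhw_h,\bhw_h)=\Xi_h^{\rm ell}(\bu;\bhw_h).
\]

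The key step is to bound $\Xi_h^{\rm ell}(\bu;\bhw_h)$ by $|\bu|_{\sharp h}\tnorm{\bhw_h}_h$. By construction, $\Xi_h^{\rm ell}$ is the right-hand side of \eqref{eq:Xisplit}. The proof of \Cref{lem:consistency} bounds each of its five groups of terms directly, using only the definition \eqref{eq:sharp}, the factorisation of $\bI-\bS_h\bK$ from \Cref{lem:Sh}, \eqref{eq:Csquare} and \eqref{eq:stabnorm}. None of those steps uses the time-dependent structure; they only use the momentum equation in the form $\bff=\varrho\dtt\bu-\nabla\!\cdot\!\bsig(\bu)$. Here that relation is replaced by the choice of right-hand side $-\nabla\!\cdot\!\bsig(\bu)$ in \eqref{eq:ritz}. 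The hypothesis $s>\tfrac12$ of \Cref{lem:consistency} holds because $\bu\in\bm{H}^{k+2}$ with $k\ge1$.

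The one point to check is that subtracting \eqref{eq:ritz} from $a_h(\Iint_h\bu,\bhw_h)$ reproduces exactly \eqref{eq:Xisplit}. This requires the interface data to satisfy $\bg_D=\jump{\bu}+\bK\bsig(\bu_1)\bn$ and $\bg_N=\jump{\bsig(\bu)\bn}$, so that the lifting terms cancel as in the consistency proof; this holds since $\bu$ solves \eqref{eq:model}. With this, the argument of \Cref{lem:consistency} gives $|\Xi_h^{\rm ell}(\bu;\bhw_h)|\lesssim|\bu|_{\sharp h}\tnorm{\bhw_h}_h$.

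Dividing by $\tnorm{\bhw_h}_h$ (the case $\bhw_h=0$ is trivial) gives the first inequality in \eqref{eq:ritzenergy}, with a constant depending only on $\alpha_\flat^{-1}$. This constant is independent of $h$, the Lamé coefficients and $\bK$. The second inequality is exactly \Cref{lem:sharpapprox}. I expect no real obstacle; the only delicate point is the bookkeeping verifying the cancellation of the $\bg_D$, $\bg_N$ terms.
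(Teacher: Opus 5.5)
Your proposal is correct and is exactly the paper's argument. You test \eqref{eq:ritzconsist} with $\Iint_h\bu-\hat{\bm{R}}_h\bu\in\Vhz$, use the coercivity of \Cref{thm:stab}, bound $\Xi_h^{\rm ell}$ by the argument of \Cref{lem:consistency} (it is the right-hand side of \eqref{eq:Xisplit}), and conclude with \Cref{lem:sharpapprox}. One minor precision: the hidden constant in the first inequality is $\alpha_\flat^{-1}$ times the constant of \Cref{lem:consistency}, not $\alpha_\flat^{-1}$ alone, but it is still independent of $h$, of the Lam\'e coefficients and of $\bK$.
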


To improve on \eqref{eq:ritzenergy} in the $L^2$-norm we use a duality argument and we assume
that the auxiliary problem
\begin{equation}\label{eq:dual}
\left\{
\begin{aligned}
-\nabla\!\cdot\!\bsig(\bzeta)&=\bvt&&\text{in }\Omega_1\cup\Omega_2,\\
\jump{\bzeta}+\bK\bsig(\bzeta_1)\bn&=\bm{0},\qquad\jump{\bsig(\bzeta)\bn}=\bm{0}
&&\text{on }\Gamma,\\
\bzeta&=\bm{0}&&\text{on }\partial\Omega,
\end{aligned}
\right.
\end{equation}
has a unique solution for any $\bvt\in\bm{L}^2(\Omega)$ and that it satisfies the elliptic
regularity \cite{Bacuta2003}
\begin{equation}\label{eq:reg}
\|\bzeta\|_{\bm{H}^2(\Omega_1\cup\Omega_2)}
\lesssim\|\bvt\|_\Omega.
\end{equation}
Problem
\eqref{eq:dual} is the adjoint of \eqref{eq:model}, which is self-adjoint, so that the same
interface conditions appear.

\begin{theorem}[$L^2$-estimate for the elliptic projection]\label{thm:ritzL2}
Let $k\ge1$ and assume $\bu\in\bm{H}^{k+2}(\Omega_1\cup\Omega_2)$ and \eqref{eq:reg}. Then
\begin{equation}\label{eq:ritzL2}
\big\|\bm{I}_\mathcal{T}\bu-\bm{R}_\mathcal{T}\bu\big\|_\Omega
\lesssim h^{k+2}\sum_{i=1,2}\mu_i\,(1+\kappa_i)^2
\,|\bu|_{\bm{H}^{k+2}(\Omega_i)},
\end{equation}
where $\bm{R}_\mathcal{T}\bu$ denotes the cell component of $\hat{\bm{R}}_h\bu$.
\end{theorem}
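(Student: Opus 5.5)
We plan to use an Aubin--Nitsche duality argument. Set $\bhe_h:=\Iint_h\bu-\hat{\bm{R}}_h\bu\in\Vhz$ and take $\bvt:=\bm{e}_\mathcal{T}=\bm{I}_\mathcal{T}\bu-\bm{R}_\mathcal{T}\bu$ (cellwise, viewed as an $\bm{L}^2(\Omega)$ field) as data in the dual problem \eqref{eq:dual}. Let $\bzeta$ be its solution, which satisfies \eqref{eq:reg}.

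\emph{Step 1: representation of the error.} The dual data satisfy $\bg_D=\bm 0$ and $\bg_N=\bm 0$, and $-\nabla\!\cdot\!\bsig(\bzeta)=\bvt$. The consistency identity \eqref{eq:Xisplit} was derived for a generic smooth solution, so we apply it to $\bzeta$ with $\ell_h$ reduced to $(\bvt,\bw_\mathcal{T})_\Omega$. This gives $\|\bm{e}_\mathcal{T}\|_\Omega^2=a_h(\Iint_h\bzeta,\bhe_h)-\Xi_h^{\rm ell}(\bzeta;\bhe_h)$. By the symmetry of $a_h$ (\Cref{thm:stab}) and \eqref{eq:ritzconsist}, $a_h(\Iint_h\bzeta,\bhe_h)=a_h(\bhe_h,\Iint_h\bzeta)=\Xi_h^{\rm ell}(\bu;\Iint_h\bzeta)$. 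Hence
\[
\|\bm{e}_\mathcal{T}\|_\Omega^2=\Xi_h^{\rm ell}(\bu;\Iint_h\bzeta)-\Xi_h^{\rm ell}(\bzeta;\bhe_h).
\]

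\emph{Step 2: the second term.} By \Cref{lem:consistency} we have $|\Xi_h^{\rm ell}(\bzeta;\bhe_h)|\lesssim|\bzeta|_{\sharp h}\tnorm{\bhe_h}_h$. We then apply \Cref{lem:sharpapprox} with $k$ replaced by $0$, i.e.\ with $\bH^2$ regularity; the proofs of \Cref{lem:etaapprox,lem:sharpapprox} only use the approximation order $l=\min\{k+1,m\}$ of \Cref{lem:approx}. This yields $|\bzeta|_{\sharp h}\lesssim h\sum_i\mu_i^{1/2}(1+\kappa_i)\|\bzeta\|_{\bm H^2(\Omega_i)}$. Combining with \Cref{thm:ritzenergy} and \eqref{eq:reg} bounds this term by $h^{k+2}\,(\max_i\mu_i^{1/2}(1+\kappa_i))\,\Phi_k(\bu)\,\|\bm{e}_\mathcal{T}\|_\Omega$. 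The coefficient weights multiply to the form $\mu_i(1+\kappa_i)^2$ claimed. We will absorb the cross-material mismatch into the constant, or track it more carefully if necessary.

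\emph{Step 3: the first term, which is the main obstacle.} Here $\Iint_h\bzeta$ is only a discrete interpolant, not a discrete solution, so we cannot simply bound $\tnorm{\Iint_h\bzeta}_h$. Doing so would lose a power of $h$. Instead we bound each of the five contributions in \eqref{eq:Xisplit} with $\bhv_h=\Iint_h\bzeta$ by pairing a factor of order $h^{k+1}$ coming from $\bu$ with a factor of order $h$ coming from $\bzeta$.
\begin{itemize}
\item For the volume term $(\CC_i\bm\eta_{T^i}(\bu),\beps(\bm I_{T^i}\bzeta))$, we insert $\beps(\bzeta)$. The difference $\beps(\bm I_{T^i}\bzeta-\bzeta)$ is $O(h)$ by \Cref{lem:approx}. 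The remaining part $(\CC_i\bm\eta_{T^i}(\bu),\beps(\bzeta))$ should be combined with the face and interface terms. Using the definitions \eqref{eq:E1}, \eqref{eq:lift} of the reconstruction, an integration by parts, and the dual interface conditions, we expect the leading terms to telescope into quantities controlled by $\bzeta-\bm I\bzeta$ and $\bsig(\bzeta)-\bm\Pi(\bsig(\bzeta))$, which are $O(h)$.
\item The face terms involve $\bm\Pi_{(\partial T)^i}\bzeta-\bm I_{T^i}\bzeta$ and $(\bm I-\bm\Pi)(\bsig(\bu)\bn_T)$; these are $O(h^{1/2})$ times $O(h^{k+1/2})$ after \Cref{lem:mtr}.
\item The interface terms involve $\jump{\bm I_T\bzeta}_\Gamma-\jump{\bzeta}$, which is $O(h^{3/2})$ in $L^2(T^\Gamma)$. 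Combined with $\bS_h$ and with the first dual condition $\jump{\bzeta}=-\bK\bsig(\bzeta_1)\bn$, the factor $\bI-\bS_h\bK=\tfrac{h_T}{\delta}\bS_h$ must be shown to pair with $\bK\bsig(\bzeta_1)\bn$ into a bounded quantity. We expect this to work uniformly in $\bK$ thanks to \eqref{eq:unitbound}--\eqref{eq:ImSKK}.
\end{itemize}
The delicate point is this last cancellation, which must hold uniformly in the compliancy. We would first try to mimic the derivation of \eqref{eq:Xisplit} with the roles of $\bu$ and $\bzeta$ exchanged. If that fails, we would introduce $\bL_{T^1}$ for $\bzeta$'s (zero) data to exhibit the consistent reconstruction.

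Once these bounds are in place, we divide by $\|\bm e_\mathcal{T}\|_\Omega$ to conclude.
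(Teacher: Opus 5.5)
Your duality identity and your Step~2 match the paper's: by \eqref{eq:ritzconsist}, the paper's $a_h(\bm{\hat\epsilon}_h,\Iint_h\bzeta)$ is your $\Xi_h^{\rm ell}(\bu;\Iint_h\bzeta)$. Everything therefore rests on Step~3, which you leave as an expectation, and your third bullet misidentifies what makes it hard. Uniformity in $\bK$ is not the issue, since \eqref{eq:unitbound}--\eqref{eq:ImSKK} bound every interface term uniformly; the issue is half a power of $h$. Inserting $\jump{\bm I_T\bzeta}_\Gamma=\jump{\bm I_T\bzeta-\bzeta}_\Gamma-\bK\bsig(\bzeta_1)\bn$ into the interface terms of \eqref{eq:Xisplit} leaves two leading parts:
\begin{itemize}
\item $-\langle\bS_h\bK(\jump{\bm I_T\bu}_\Gamma-\jump{\bu}),\bsig(\bzeta_1)\bn\rangle_{T^\Gamma}$;
\item $\langle(\bI-\bS_h\bK)\bK\CC_1\bm\eta_{T^1}(\bu)\bn,\bsig(\bzeta_1)\bn\rangle_{T^\Gamma}$.
\end{itemize}
For each of these, \Cref{lem:Sh} only yields $h_T^{k+3/2}\|\bsig(\bzeta_1)\bn\|_{T^\Gamma}$. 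After summation this is $h^{k+3/2}$, not $h^{k+2}$, because $\|\bsig(\bzeta_1)\bn\|_\Gamma$ carries no small factor. These terms therefore have to cancel exactly; they cannot be bounded on their own.

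The cancellation comes from the volume term of your first bullet. Pick $\btau_i\in\sPk{k}(T^i)$ approximating $\bsig(\bzeta_i)$ to order $h_T$ in $L^2(T^i)$ and to order $h_T^{1/2}$ on $T^\Gamma$, with bounded divergence; the paper's choice is $\btau_i=\bG_{T^i}(\Iint_T\bzeta)$. Split
\[
(\CC_i\bm\eta_{T^i}(\bu),\beps(\bm I_{T^i}\bzeta))_{T^i}=(\CC_i\bm\eta_{T^i}(\bu),\beps(\bm I_{T^i}\bzeta-\bzeta))_{T^i}+(\bm\eta_{T^i}(\bu),\bsig(\bzeta_i)-\btau_i)_{T^i}+(\bm\eta_{T^i}(\bu),\btau_i)_{T^i}.
\]
Evaluate the last piece with \eqref{eq:E1}, \eqref{eq:E2}, \eqref{eq:lift} and an integration by parts on $T^i$:
\begin{itemize}
\item the $(\partial T)^i$ terms cancel;
\item the volume terms are $(\bu_i-\bm I_{T^i}\bu,\nabla\!\cdot\!\btau_i)_{T^i}=O(h^{k+2})$;
\item the $T^\Gamma$ terms, once $\btau_i\bn$ is replaced by $\bsig(\bzeta_i)\bn$ and $\jump{\bsig(\bzeta)\bn}=\bm 0$ is used, equal
\[
\langle\bS_h\bK(\jump{\bm I_T\bu}_\Gamma-\jump{\bu}),\bsig(\bzeta_1)\bn\rangle_{T^\Gamma}-\langle(\bI-\bS_h\bK)\bK\CC_1\bm\eta_{T^1}(\bu)\bn,\bsig(\bzeta_1)\bn\rangle_{T^\Gamma}
\]
up to $O(h^{k+2})$.
\end{itemize}
These are exactly the negatives of the two leading terms above. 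Only products of a primal error and a dual error survive, all $O(h^{k+2})$ uniformly in $\bK$. Note that both dual interface conditions are needed, not just the first.

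There are two smaller slips. In your second bullet, $\|\bm\Pi_{(\partial T)^i}\bzeta-\bm I_{T^i}\bzeta\|_{(\partial T)^i}=O(h_T^{3/2})$ by \Cref{lem:approx,lem:mtr}, not $O(h^{1/2})$; with your count the product would only be $h^{k+1}$. In Step~2, the cross factors $\mu_j^{1/2}(1+\kappa_j)\,\mu_i^{1/2}(1+\kappa_i)$ cannot be absorbed into a constant that must be independent of the Lam\'e coefficients. The paper's proof glosses over this same point.
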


\begin{proof}
Set $\bm{\hat\epsilon}_h:=\Iint_h\bu-\hat{\bm{R}}_h\bu$ and take
$\bvt:=\bm\epsilon_\mathcal{T}$ in \eqref{eq:dual}, with solution $\bzeta$. Integrating by
parts cellwise, using that $\bsig(\bzeta)\bn$ is single valued across the mesh interfaces not
lying on $\Gamma$, that the face components of $\bm{\hat\epsilon}_h$ are single valued and
vanish on $\partial\Omega$, and inserting the reconstructed dual stress
$\bG_{T^i}(\Iint_T\bzeta)$ exactly as in \cite[Prop.~5.2]{HuangBurman2024}, one obtains
\[
\|\bm\epsilon_\mathcal{T}\|_\Omega^2
=\Xi_h^{\rm ell}\big(\bzeta;\bm{\hat\epsilon}_h\big)
+a_h\big(\bm{\hat\epsilon}_h,\Iint_h\bzeta\big),
\]
where in the first term the roles of the primal and the dual solutions are interchanged. The
first term is bounded by $|\bzeta|_{\sharp h}\tnorm{\bm{\hat\epsilon}_h}_h$ thanks to
\Cref{lem:consistency}, and $|\bzeta|_{\sharp h}\lesssim h\,\Phi_0(\bzeta)$ by
\Cref{lem:sharpapprox} applied with $k=0$, i.e.\ using only the $H^2$-regularity
\eqref{eq:reg}. The second term is treated as in \cite[Steps~4--6~of~Proposition~5.2]{HuangBurman2024}: 
\begin{align*}
	a_h&(\bm{\hat\epsilon}_h,\Iint_h(\bzeta))
	=\sum_{T\in\Th}\Big[ (\nabla\cdot\bsig(\bu),\bm{I}_T(\bzeta)-\bzeta)_T
	-a_T( \bu-\Iint_T \bu, \bzeta-\Iint_T \bzeta)\\
	&+\big\langle S_h \jump{\bu-\bm{I}_T\bu}, K \big(\bsig(\bzeta_1) -\bG_{T^1}(\Iint_T\bzeta)\big)\bn \big\rangle_{T^\Gamma}
	+\big\langle \bu_2 -\bm{I}_{T^2}\bu, \jump{ \big( \bsig(\bzeta)-\bG_T(\Iint_T\bzeta) \big)\bn }\big\rangle_{T^\Gamma} \\
	&+\big\langle K \big(\bsig(\bu_1) -\bG_{T^1}(\Iint_T\bn)\big)\bn, S_h\jump{\bzeta -\bm{I}_T\bzeta} \big\rangle_{T^\Gamma}
	+\big\langle \jump{ \big(\bsig(\bu) -\bG_T(\Iint_T\bu)\big)\bn }, \bzeta_2 -\bm{I}_{T^2}\bzeta\big\rangle_{T^\Gamma} \\
	&+\langle S_h \jump{\bm{I}_T\bu-\bu}, \jump{\bm{I}_T\bzeta -\bzeta} \rangle_{T^\Gamma} +s_T^{\partial T}(\Iint_T(\bu),\Iint_T(\bzeta)) \Big],
\end{align*}
using
\eqref{eq:ritz} with the test function $\Iint_h\bzeta$, subtracting
$\big(\bm{I}_T(\nabla\cdot\bsig(\bu)),\bm{I}_T(\bzeta)-\bzeta\big)_T$, and expanding the interface stabilisation by means of the
two interface conditions satisfied by $\bu$ and by $\bzeta$, all the leading contributions
cancel and the remainder is bounded by
$h^{k+2}\sum_{i=1,2}\mu_i\,(1+\kappa_i)^2|\bu|_{H^{k+2}(\Omega_i)}
\|\bm\epsilon_\mathcal{T}\|_\Omega$; the only place where the unfitted setting differs from
the fitted one is that the pairings on $\partial T\cap\Gamma$ are replaced by pairings on
$T^\Gamma$ and $(\partial T)^i$, which are handled with \Cref{lem:disc,lem:mtr} instead of
their fitted counterparts. Combining the two bounds with \eqref{eq:ritzenergy} and dividing
by $\|\bm\epsilon_\mathcal{T}\|_\Omega$ gives \eqref{eq:ritzL2}.
\end{proof}

We can now state the $L^2$-error estimate for the space semi-discrete problem. Define
\begin{equation}\label{eq:Psi}
\Psi_k(t,\bu):=|\bu|_{L^\infty(0,t;\bm{H}^{k+2}(\Omega_1\cup\Omega_2))}
+|\dt\bu|_{L^1(0,t;\bm{H}^{k+2}(\Omega_1\cup\Omega_2))}.
\end{equation}

\begin{proposition}[$L^2$-error estimate]\label{prop:L2}
Let $\bu$ solve \eqref{eq:model} and let $\bhu_h$ solve \eqref{eq:semi} with the initial
condition \eqref{eq:ic}. Assume $k\ge1$, that \eqref{eq:reg} holds and that
$\bu\in C^1(\overline J;\bm{H}^{k+2}(\Omega_1\cup\Omega_2))$. Then, for all $t\in J$,
\begin{equation}\label{eq:L2est}
\big\|\varrho^{1/2}\big(\bu_\mathcal{T}-\bm{I}_\mathcal{T}\bu\big)
\big\|_{C^0(0,t;\bm{L}^2(\Omega))}\lesssim
h^{k+2}\,
\Psi_k(t,\bu).
\end{equation}
\end{proposition}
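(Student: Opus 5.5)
We follow the Baker--Wheeler approach, as in \cite[Thm.~2]{BDES2021}: split the error through the elliptic projection of \eqref{eq:ritz}. For every time $t$ write
\[
\bu_\mathcal{T}-\bm{I}_\mathcal{T}\bu=(\bu_\mathcal{T}-\bm{R}_\mathcal{T}\bu)+(\bm{R}_\mathcal{T}\bu-\bm{I}_\mathcal{T}\bu).
\]
The second part is bounded pointwise in time by \Cref{thm:ritzL2}. This gives $h^{k+2}|\bu(t)|_{H^{k+2}}$, up to the Lam\'e-dependent factors that are absorbed in $\lesssim$ as in the statement. It remains to bound $\bhv_h:=\bhu_h-\hat{\bm{R}}_h\bu\in\Vhz$. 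The projection commutes with $\dt$ because the data enter linearly, so $\dt\hat{\bm{R}}_h\bu=\hat{\bm{R}}_h\dt\bu$.

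First we derive the equation for $\bhv_h$. The definition \eqref{eq:ritz} makes $a_h(\hat{\bm{R}}_h\bu,\bhw_h)$ equal to $\ell_h(\bhw_h)$ with $\bff$ replaced by $-\nabla\!\cdot\!\bsig(\bu)=\bff-\varrho\dtt\bu$. Subtracting this from \eqref{eq:semi} gives
\[
(\varrho\dtt\bv_\mathcal{T},\bw_\mathcal{T})_\Omega+a_h(\bhv_h,\bhw_h)=\big(\varrho(\dtt\bu-\dtt\bm{R}_\mathcal{T}\bu),\bw_\mathcal{T}\big)_\Omega .
\]
This is exactly the structure needed for Baker's trick, since the right-hand side is an $L^2$ pairing of the cell unknowns. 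To avoid requiring $\dtt\bu\in H^{k+2}$, which the hypotheses do not give, we integrate once in time. Set $\bhz_h(t):=\int_t^{s}\bhv_h(\tau)\,d\tau$ for fixed $s\in(0,t]$, and test with $\bhw_h=\bhz_h$ on $(0,s)$. Using $\dt\bhz_h=-\bhv_h$ and $\bhz_h(s)=0$, integration by parts in time gives
\[
\tfrac12\|\varrho^{1/2}\bv_\mathcal{T}(s)\|_\Omega^2+\tfrac12 a_h(\bhz_h(0),\bhz_h(0))
=\tfrac12\|\varrho^{1/2}\bv_\mathcal{T}(0)\|_\Omega^2-\big(\varrho\dt\bv_\mathcal{T}(0),\bz_\mathcal{T}(0)\big)_\Omega+\int_0^s\big(\varrho\,\dt\rho_\mathcal{T},\bv_\mathcal{T}\big)_\Omega+\text{(boundary terms)},
\]
where $\rho_\mathcal{T}:=\bu-\bm{R}_\mathcal{T}\bu$. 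After one integration by parts of the source term, only $\rho_\mathcal{T}$ and $\dt\rho_\mathcal{T}$ appear, in $L^2$.

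To control the source terms, split $\rho_\mathcal{T}=(\bu-\bm{I}_\mathcal{T}\bu)+(\bm{I}_\mathcal{T}\bu-\bm{R}_\mathcal{T}\bu)$. The first piece is $O(h^{k+2})$ in $L^2$ by \Cref{lem:approx}, and the second is $O(h^{k+2})$ by \Cref{thm:ritzL2}; the same holds for their time derivatives. Taking the supremum over $s$ and using Young's inequality then gives
\[
\|\varrho^{1/2}\bv_\mathcal{T}\|_{C^0(0,t;L^2)}\lesssim\|\varrho^{1/2}\bv_\mathcal{T}(0)\|_\Omega+\text{(the term with }\dt\bv_\mathcal{T}(0))+h^{k+2}\Psi_k(t,\bu).
\]

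The main obstacle is the initial data. By \eqref{eq:ic}, $\bv_\mathcal{T}(0)=\bm{I}_\mathcal{T}\bu_0-\bm{R}_\mathcal{T}\bu_0$, which is $O(h^{k+2})$ by \Cref{thm:ritzL2}. The term with $\dt\bv_\mathcal{T}(0)$ is the delicate one. I would first try to handle it by writing $\dt\bv_\mathcal{T}(0)=\bm{I}_\mathcal{T}\bq_0-\bm{R}_\mathcal{T}\bq_0$, so that this term is also $O(h^{k+2})$ by \Cref{thm:ritzL2} applied to $\dt\bu(0)$. That step is licit because $\bu\in C^1(\overline J;\bm{H}^{k+2})$, and the needed norm is then bounded by $\Psi_k$. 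Its pairing with $\bz_\mathcal{T}(0)$ is controlled by $\|\bz_\mathcal{T}(0)\|_\Omega\le t\|\bv_\mathcal{T}\|_{C^0(0,t;L^2)}$ and then absorbed with Young's inequality. Combining the bound on $\bhv_h$ with the pointwise bound from \Cref{thm:ritzL2} through the triangle inequality yields \eqref{eq:L2est}.
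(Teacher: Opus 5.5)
Your argument follows the paper's structure: the elliptic projection \eqref{eq:ritz}, Baker's test function $\hat{\bm z}_h(t)=\int_t^s\bhv_h$, \Cref{thm:ritzL2} applied to $\bu$ and $\dt\bu$, and the triangle inequality. The gap is in the initial velocity. You bound $(\varrho\dt\bv_\mathcal{T}(0),\bm z_\mathcal{T}(0))_\Omega$ and the boundary term $-(\varrho\dt\rho_\mathcal{T}(0),\bm z_\mathcal{T}(0))_\Omega$ separately, each through $\bq_0=\dt\bu(0)$. This leaves $t\,h^{k+2}|\dt\bu(0)|_{\bm H^{k+2}(\Omega_1\cup\Omega_2)}$ on the right-hand side. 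Contrary to your claim, this is \emph{not} controlled by $\Psi_k(t,\bu)$, which only contains $\sup_{[0,t]}|\bu|_{\bm H^{k+2}}$ and $\int_0^t|\dt\bu|_{\bm H^{k+2}}$; the $L^1$-in-time norm of $\dt\bu$ says nothing about its value at $t=0$. For instance, $\bu(\tau)=\epsilon(1-e^{-\tau/\epsilon})\bm\psi$ gives $\Psi_k(t,\bu)\le2\epsilon|\bm\psi|_{\bm H^{k+2}}$, while $t|\dt\bu(0)|_{\bm H^{k+2}}=t|\bm\psi|_{\bm H^{k+2}}$.

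The missing idea is the cancellation that the paper builds into its error equation. It writes the right-hand side as $(\varrho\dtt\bm\varpi,\bw_\mathcal{T})_\Omega+(\varrho\dtt(\bu-\bm I_\mathcal{T}\bu),\bw_\mathcal{T})_{\Omega^\Gamma}$, with $\bm\varpi=\bm I_\mathcal{T}\bu-\bm R_\mathcal{T}\bu$. The initial-velocity coefficient coming from $\bv_\mathcal{T}-\bm\varpi=\bu_\mathcal{T}-\bm I_\mathcal{T}\bu$ is then $\bm 0$ by \eqref{eq:ic}. In your notation, first correct the sign of your first time-zero term: it should be $+(\varrho\dt\bv_\mathcal{T}(0),\bm z_\mathcal{T}(0))_\Omega$. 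The two time-zero terms then combine into $(\varrho(\dt\bv_\mathcal{T}(0)-\dt\rho_\mathcal{T}(0)),\bm z_\mathcal{T}(0))_\Omega=(\varrho(\bm I_\mathcal{T}\bq_0-\bq_0),\bm z_\mathcal{T}(0))_\Omega$. This vanishes on every uncut cell, because $\bm I_{T^i}$ is the $L^2$-projection there, $\varrho$ is constant and $\bm z_\mathcal{T}(0)$ is a cellwise polynomial. What remains is $\bv_\mathcal{T}(0)=\bm\varpi(0)$ and $\dt\rho_\mathcal{T}$ in $L^1(0,t;\bm L^2(\Omega))$, both covered by $\Psi_k$.

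A cut-cell residual $(\varrho(\bm I_\mathcal{T}\bq_0-\bq_0),\bm z_\mathcal{T}(0))_{\Omega^\Gamma}$ survives this cancellation, and the paper's Step~2 does not display it either. It vanishes exactly if the initial velocity is taken as the $L^2$-projection on the cut subcells as well, which is the variant mentioned after \eqref{eq:ic}. Otherwise you must keep $t\|\varrho^{1/2}(\bq_0-\bm I_\mathcal{T}\bq_0)\|_{\Omega^\Gamma}$ on the right-hand side. Either of these fixes should replace your assertion that the extra initial-velocity term is bounded by $\Psi_k$.
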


\begin{proof}
\emph{Step 1: error equation.} We use a different error decomposition than in
\Cref{prop:energy}: we set $\hat{\bm{d}}_h(t):=\bhu_h(t)-\hat{\bm{R}}_h\bu(t)$ for $t\in J$,
where $\hat{\bm{R}}_h$ is the elliptic projection \eqref{eq:ritz}. From \eqref{eq:semi} and
\eqref{eq:ritz} we infer that, for all $\bhw_h\in\Vhz$,
\[
\big(\varrho\,\dtt\bm{d}_\mathcal{T}(t),\bw_\mathcal{T}\big)_\Omega
+a_h\big(\hat{\bm{d}}_h(t),\bhw_h\big)
=\big(\varrho\,\dtt\bm\varpi(t),\bw_\mathcal{T}\big)_\Omega
+\big(\varrho\big(\dtt\bu-\dtt\bm{I}_\mathcal{T}\bu\big),\bw_\mathcal{T}\big)_{\Omega^\Gamma},
\]
with $\bm\varpi(t):=\bm{I}_\mathcal{T}\bu(t)-\bm{R}_\mathcal{T}\bu(t)$, where we used
$-\nabla\!\cdot\!\bsig(\bu)=\bff-\varrho\dtt\bu$ and the fact that $\bm{I}_{T^i}$ coincides
with the $L^2$-projection on the uncut cells.

\emph{Step 2: stability argument.} Let $\chi\in J$ and set
$\hat{\bm{z}}_h(t):=\int_t^{\chi}\hat{\bm{d}}_h(s)\,ds$, so that
$\dt\hat{\bm{z}}_h=-\hat{\bm{d}}_h$ and $\hat{\bm{z}}_h(\chi)=\bm{0}$. Testing the above
identity with $\bhw_h=\hat{\bm{z}}_h(t)$, integrating by parts in time and using the symmetry
of $a_h$, we obtain, exactly as in \cite[Thm.~2]{BDES2021},
\begin{multline*}
\tfrac14\big\|\varrho^{1/2}\bm{d}_\mathcal{T}\big\|_{C^0(0,t;\bm{L}^2(\Omega))}^2
\le\tfrac12\big\|\varrho^{1/2}\bm\varpi(0)\big\|_\Omega^2
+\big\|\varrho^{1/2}\dt\bm\varpi\big\|_{L^1(0,t;\bm{L}^2(\Omega))}^2\\
+\big\|\varrho^{1/2}\dt\big(\bu-\bm{I}_\mathcal{T}\bu\big)
\big\|_{L^1(0,t;\bm{L}^2(\Omega^\Gamma))}^2,
\end{multline*}
where we also used that $\bm\varpi(0)-\bm{d}_\mathcal{T}(0)=\bm{0}$ and
$a_h(\hat{\bm{z}}_h(0),\hat{\bm{z}}_h(0))\ge0$.

\emph{Step 3: conclusion.} Since
$\bu_\mathcal{T}-\bm{I}_\mathcal{T}\bu=\bm{d}_\mathcal{T}-\bm\varpi$, the triangle
inequality gives
\begin{multline*}
\big\|\varrho^{1/2}(\bu_\mathcal{T}-\bm{I}_\mathcal{T}\bu)\big\|_{C^0(0,t;\bm{L}^2(\Omega))}
\lesssim\big\|\varrho^{1/2}\bm\varpi\big\|_{C^0(0,t;\bm{L}^2(\Omega))}
+\big\|\varrho^{1/2}\dt\bm\varpi\big\|_{L^1(0,t;\bm{L}^2(\Omega))}\\
+\big\|\varrho^{1/2}\dt(\bu-\bm{I}_\mathcal{T}\bu)\big\|_{L^1(0,t;\bm{L}^2(\Omega^\Gamma))}.
\end{multline*}
The first two terms are bounded by \Cref{thm:ritzL2} applied to $\bu(t)$ and to $\dt\bu(t)$
(note that $\hat{\bm{R}}_h$ commutes with $\dt$, since the data of \eqref{eq:ritz} depend
linearly on $\bu$), and the last one is of order $h^{k+2}$ by \Cref{lem:approx}, which gives
\eqref{eq:L2est} with $\Psi_k$ defined in \eqref{eq:Psi}.
\end{proof}

\begin{remark}[Optimality]\label{rem:optimal}
The rates $h^{k+1}$ in \Cref{prop:energy} and $h^{k+2}$ in \Cref{prop:L2} are optimal for the
polynomial degrees used, namely $k$ on the faces and $k+1$ in the cells. The constants do not
depend on the compliancy tensor $\bK$, nor on how $\Gamma$ cuts the mesh, and they depend on
the material parameters only through the explicit factors indicated in \eqref{eq:sharpapprox} and
\eqref{eq:ritzL2}. All the estimates degenerate as $\kappa_i\to+\infty$, see
\Cref{rem:locking}.
\end{remark}

\section{Time discretisation}\label{sec:time}

\subsection{Algebraic setting and the Newmark scheme}\label{ssec:newmark}

Let $N^{k+1}_\mathcal{T}:=\dim(\bm{V}_\mathcal{T})$ and
$N^k_\mathcal{F}:=\dim(\bm{V}_{\mathcal{F}0})$, where $\bm{V}_{\mathcal{F}0}$ is the subspace
of $\bm{V}_\mathcal{F}$ with zero components on the boundary faces. Once bases have been
chosen, the algebraic realisation of \eqref{eq:semi} reads
\begin{equation}\label{eq:algebraic}
\begin{bmatrix}\mathsf{M}_{\mathcal{T}\mathcal{T}}&0\\0&0\end{bmatrix}
\begin{bmatrix}\dtt\mathsf{U}_\mathcal{T}(t)\\ \bullet\end{bmatrix}
+\begin{bmatrix}\mathsf{K}_{\mathcal{T}\mathcal{T}}&\mathsf{K}_{\mathcal{T}\mathcal{F}}\\
\mathsf{K}_{\mathcal{F}\mathcal{T}}&\mathsf{K}_{\mathcal{F}\mathcal{F}}\end{bmatrix}
\begin{bmatrix}\mathsf{U}_\mathcal{T}(t)\\ \mathsf{U}_\mathcal{F}(t)\end{bmatrix}
=\begin{bmatrix}\mathsf{F}_\mathcal{T}(t)\\ \mathsf{F}_\mathcal{F}(t)\end{bmatrix},
\end{equation}
with the (block-diagonal) mass matrix $\mathsf{M}_{\mathcal{T}\mathcal{T}}$ associated with
the weighted inner product $(\varrho \cdot,\cdot)_{\Omega}$ and the symmetric positive-definite stiffness
matrix associated with $a_h$; the bullet stands for $\dtt\mathsf{U}_\mathcal{F}$, which is
irrelevant owing to the structure of the mass matrix.

Let $(t^n)_{0\le n\le N}$ be the discrete time nodes with $t^0=0$ and $t^N=T$, and let
$\Delta t=T/N$. We consider the Newmark scheme with parameters $\beta$ and $\gamma$, which is
second-order accurate in time, implicit if $\beta>0$ and unconditionally stable if
$\tfrac12\le\gamma\le2\beta$; the classical choice is $\gamma=\tfrac12$, $\beta=\tfrac14$.
The scheme considers an approximation for the displacement, the velocity and the
acceleration at each time node, which are all hybrid unknowns
$\bhu_h^n,\bhv_h^n,\hat{\bm{a}}_h^n\in\Vhz$. It is initialised by
$\bhu_h^0:=\Iint_h(\bu_0)$, $\bhv_h^0:=\Iint_h(\bq_0)$, and $\hat{\bm{a}}^0_h$ is defined by
$(\varrho \bm{a}^0_\mathcal{T},\bq_\mathcal{T})_{\Omega}
+a_h(\bhu^0_h,(\bq_\mathcal{T},\bm{0}))=\ell_h((\bq_\mathcal{T},\bm{0}))$ for all
$\bq_\mathcal{T}\in\bm{V}_\mathcal{T}$ and
$a_h(\hat{\bm{a}}^0_h,(\bm{0},\bq_\mathcal{F}))=\ell_h((\bm{0},\bq_\mathcal{F}))$
for all $\bq_\mathcal{F}\in\bm{V}_{\mathcal{F}0}$. Then, for all
$n\in\{0,\dots,N-1\}$, one performs the following three steps.
\begin{enumerate}[leftmargin=1.4em,itemsep=1pt]
\item \emph{Predictor:}
$\bhu_h^{*n}:=\bhu_h^n+\Delta t\,\bhv_h^n+\tfrac12\Delta t^2(1-2\beta)\hat{\bm{a}}_h^n$ and
$\bhv_h^{*n}:=\bhv_h^n+\Delta t(1-\gamma)\hat{\bm{a}}_h^n$.
\item \emph{Linear solver:} find $\hat{\bm{a}}_h^{n+1}\in\Vhz$ such that, for all
$\bhq_h\in\Vhz$,
\begin{equation}\label{eq:newmarksolve}
\big(\varrho \bm{a}^{n+1}_\mathcal{T},\bq_\mathcal{T}\big)_{\Omega}
+\beta\Delta t^2\,a_h\big(\hat{\bm{a}}_h^{n+1},\bhq_h\big)
=\ell_h^{n+1}(\bhq_h)-a_h\big(\bhu_h^{*n},\bhq_h\big).
\end{equation}
\item \emph{Corrector:} $\bhu_h^{n+1}:=\bhu_h^{*n}+\beta\Delta t^2\hat{\bm{a}}_h^{n+1}$ and
$\bhv_h^{n+1}:=\bhv_h^{*n}+\gamma\Delta t\,\hat{\bm{a}}_h^{n+1}$.
\end{enumerate}
Since the matrix
$\mathsf{M}_{\mathcal{T}\mathcal{T}}+\beta\Delta t^2\mathsf{K}_{\mathcal{T}\mathcal{T}}$
is block diagonal, static condensation can be applied to \eqref{eq:newmarksolve}: the cell
unknowns are eliminated locally and only a global transmission problem coupling the face
unknowns attached to $\bm{V}_{\mathcal{F}0}$ has to be solved, with a matrix that is
independent of $n$ and can therefore be factorised once and for all.

An important property of the scheme is its energy balance. Let
\begin{equation}\label{eq:discenergy}
\hat{E}^n:=\tfrac12\big\|\varrho^{1/2}\bm{v}^n_\mathcal{T}\big\|_\Omega^2
+\tfrac12a_h\big(\bhu^n_h,\bhu^n_h\big)
+\varsigma\Delta t^2\big\|\varrho^{1/2}\bm{a}^n_\mathcal{T}\big\|_\Omega^2,
\qquad\varsigma:=\tfrac14(2\beta-\gamma).
\end{equation}
A straightforward extension of \cite[Lem.~4]{BurmanDuranErn2022CAMC} (the discrete bilinear
form $a_h$ being symmetric positive semi-definite, which is all that is used there) shows
that
\begin{equation}\label{eq:energybalance}
\hat E^n=\hat E^0+\sum_{m=0}^{n-1}\tfrac12\big(\bff(t^{m+1})+\bff(t^m),
\bm{u}^{m+1}_\mathcal{T}-\bm{u}^m_\mathcal{T}\big)_\Omega
+\sum_{m=0}^{n-1}\tfrac12\big(\ell^{\Gamma,m+1}_h+\ell^{\Gamma,m}_h\big)
\big(\bhu^{m+1}_h-\bhu^m_h\big),
\end{equation}
where $\ell_h^{\Gamma,m}$ collects the interface contributions to $\ell_h$ at time $t^m$. In
particular, for $\beta=\tfrac14$, $\gamma=\tfrac12$ (so that $\varsigma=0$) and in the
absence of external forcing and of interface data, the discrete energy
$\hat E^n=\tfrac12\|\varrho^{1/2}\bm{v}^n_\mathcal{T}\|^2_\Omega
+\tfrac12a_h(\bhu^n_h,\bhu^n_h)$ is \emph{exactly} conserved, uniformly in the mesh size, in
the polynomial degree and in the compliancy. This is confirmed numerically in
\Cref{ssec:case2}.

\subsection{Singly diagonally implicit Runge--Kutta schemes}\label{ssec:sdirk}

The Newmark scheme is only second-order accurate in time and, for
$\beta=\tfrac14$, $\gamma=\tfrac12$, it is non-dissipative. In wave propagation over many
periods, its dispersion error accumulates and, unless $\Delta t$ is taken very small, it
soon dominates the space discretisation error; moreover, nothing damps the poorly resolved
high-frequency components of the discrete solution. Higher-order, $A$-stable and mildly
dissipative schemes are therefore attractive, and we consider here the singly diagonally
implicit Runge--Kutta (SDIRK) schemes used for the wave equation in
\cite[Sect.~4.2]{BurmanDuranErn2022CAMC} and \cite[Sect.~5.2.3]{BDES2021}.

To this purpose, we rewrite the space semi-discrete problem \eqref{eq:semi} as a first-order
system in time by introducing the hybrid velocity $\bhv_h:=\dt\bhu_h$, which gives
\begin{equation}\label{eq:firstorder}
\dt\bhu_h=\bhv_h,\qquad
\big(\varrho\,\dt\bm{v}_\mathcal{T},\bw_\mathcal{T}\big)_\Omega+a_h(\bhu_h,\bhw_h)
=\ell_h(\bhw_h)\quad\forall\,\bhw_h\in\Vhz .
\end{equation}
We emphasise that only the \emph{time} discretisation is modified: the space discretisation
of \Cref{sec:method}, and hence the whole analysis of \Cref{sec:analysis}, is unchanged. This
differs from \cite{BurmanDuranErn2022CAMC,BDES2021}, where the Runge--Kutta schemes are
applied to a genuinely first-order HHO formulation in which the stress tensor is an
independent cellwise unknown.

Recall that a Runge--Kutta scheme is defined by its Butcher tableau
$\{a_{ij}\}_{1\le i,j\le s}$, $\{b_i\}_{1\le i\le s}$ and $\{c_i\}_{1\le i\le s}$,
where $s\ge1$ is the number of stages, and that it is SDIRK when the matrix
$\{a_{ij}\}$ is lower triangular with $a_{11}=\dots=a_{ss}=:a_*$. We consider the
SDIRK$(s,s+1)$ schemes with $s$ stages and order $s+1$, $s\in\{1,2,3\}$, whose Butcher
tableaux are \cite[Sect.~5]{BurmanDuranErn2022CAMC}
\begin{equation}\label{eq:tableaux}
\renewcommand{\arraystretch}{1.2}
\begin{array}{c|c}
\tfrac12&\tfrac12\\\hline &1
\end{array}
\qquad
\begin{array}{c|cc}
\gamma&\gamma&0\\
1-\gamma&1-2\gamma&\gamma\\\hline
&\tfrac12&\tfrac12
\end{array}
\qquad
\begin{array}{c|ccc}
\gamma&\gamma&0&0\\
\tfrac12&\tfrac12-\gamma&\gamma&0\\
1-\gamma&2\gamma&1-4\gamma&\gamma\\\hline
&\delta&1-2\delta&\delta
\end{array}
\end{equation}
with $\gamma:=\tfrac12+\tfrac{1}{2\sqrt3}$ for $s=2$, and
$\gamma:=\tfrac{1}{\sqrt3}\cos\big(\tfrac{\pi}{18}\big)+\tfrac12$,
$\delta:=\tfrac{1}{6(2\gamma-1)^2}$ for $s=3$.

Let $\bhu^0_h:=\Iint_h(\bu_0)$ and $\bhv^0_h:=\Iint_h(\bq_0)$, the face components being
determined as in \Cref{ssec:semidiscrete}; no initial acceleration is needed. For all
$n\in\{1,\dots,N\}$, the step from $t^{n-1}$ to $t^n$ proceeds as follows. For all
$i\in\{1,\dots,s\}$, set
\begin{equation}\label{eq:rkpred}
\bhu^{*n,i}_h:=\bhu^{n-1}_h+\Delta t\sum_{j<i}a_{ij}\bhv^{n,j}_h,\qquad
\bhv^{*n,i}_h:=\bhv^{n-1}_h+\Delta t\sum_{j<i}a_{ij}\hat{\bm{\kappa}}^{n,j}_h,
\end{equation}
and $\bhp^{n,i}_h:=\bhu^{*n,i}_h+a_*\Delta t\,\bhv^{*n,i}_h$; then find
$\hat{\bm{\kappa}}^{n,i}_h\in\Vhz$ such that, for all $\bhq_h\in\Vhz$,
\begin{equation}\label{eq:rksolve}
\big(\varrho \bm{\kappa}^{n,i}_\mathcal{T},\bq_\mathcal{T}\big)_{\Omega}
+(a_*\Delta t)^2\,a_h\big(\hat{\bm{\kappa}}^{n,i}_h,\bhq_h\big)
=\ell_h^{n-1+c_i}(\bhq_h)-a_h\big(\bhp^{n,i}_h,\bhq_h\big),
\end{equation}
and set $\bhv^{n,i}_h:=\bhv^{*n,i}_h+a_*\Delta t\,\hat{\bm{\kappa}}^{n,i}_h$. Finally,
\begin{equation}\label{eq:rkupdate}
\bhu^n_h:=\bhu^{n-1}_h+\Delta t\sum_{i=1}^sb_i\bhv^{n,i}_h,\qquad
\bhv^n_h:=\bhv^{n-1}_h+\Delta t\sum_{i=1}^sb_i\hat{\bm{\kappa}}^{n,i}_h .
\end{equation}

Three comments are in order. First, comparing \eqref{eq:rksolve} with \eqref{eq:newmarksolve}
shows that a Runge--Kutta stage has exactly the same structure as a Newmark step, with
$\beta\Delta t^2$ replaced by $(a_*\Delta t)^2$; in particular, static condensation applies
verbatim. Second, since the scheme is \emph{singly} diagonally implicit, the matrix of
\eqref{eq:rksolve} is the same at every stage and at every time step, so that a single
factorisation of the condensed matrix is performed once and for all, exactly as for Newmark;
the cost of one time step is then $s$ times that of a Newmark step. Third, taking
$\bhq_h=(\bm{0},\bq_\mathcal{F})$ in \eqref{eq:rksolve} and using
$\bhu^{n,i}_h=\bhp^{n,i}_h+(a_*\Delta t)^2\hat{\bm\kappa}^{n,i}_h$ shows that
$a_h(\bhu^{n,i}_h,(\bm{0},\bq_\mathcal{F}))=\ell^{n-1+c_i}_h((\bm{0},\bq_\mathcal{F}))$, i.e.\
the algebraic equations attached to the faces --- the second block row of
\eqref{eq:algebraic} --- are satisfied exactly at every stage. The same equations are
re-imposed on $\bhu^n_h$ and $\bhv^n_h$ after the update \eqref{eq:rkupdate}, which only
affects their face components.

\begin{remark}[Energy]\label{rem:rkenergy}
The SDIRK schemes \eqref{eq:tableaux} are $A$-stable but, contrary to the Newmark scheme with
$\beta=\tfrac14$, $\gamma=\tfrac12$, they are dissipative: the discrete energy
$\hat E^n$ of \eqref{eq:discenergy} (with $\varsigma:=0$) decays in the absence of external
forcing. The dissipation is concentrated on the poorly resolved high-frequency components,
which is precisely the mechanism that makes these schemes more accurate than the
conservative one at a given $\Delta t$; the energy loss is quantified numerically in
\Cref{ssec:case2}.
\end{remark}

\section{Numerical experiments}\label{sec:num}

In this section we report two-dimensional numerical experiments that assess the error
estimates of \Cref{prop:energy,prop:L2}, i.e.\ the convergence rates $h^{k+1}$ and $h^{k+2}$
of the energy- and $L^2$-errors, their robustness with respect to the compliancy parameters
$\alpha,\beta\ge0$ and with respect to how the interface cuts the mesh, and that illustrate
the propagation of elastic waves across an unresolved imperfect interface. Three test cases
are considered, in increasing order of difficulty: a stationary problem with a manufactured
solution (\Cref{ssec:case1}), which is then made time-dependent (\Cref{ssec:case1dyn}); the
propagation of a Ricker wave in a heterogeneous medium separated by a perfectly bonded but
unresolved interface (\Cref{ssec:case2}); and the same configuration with a linear slip
interface (\Cref{ssec:case3}).

\subsection{Implementation}\label{ssec:impl}

All the computations reported below were performed with a plain \texttt{Python/NumPy}
implementation written for verification purposes; since no attempt was made at optimising it,
we do not report computational times.

The background meshes are uniform Cartesian meshes of the computational domain, and the
interface is a straight line $\Gamma=\{\bx\mid\bn\cdot\bx=c\}$ which is not aligned with the
mesh. Choosing a straight interface allows us to compute the geometry of the cut subcells
\emph{exactly}, by clipping the (convex) cells with the half-planes defined by $\Gamma$, so
that no geometric error pollutes the convergence rates; this isolates the effect of the
unfitted discretisation itself. Integrals over the subcells $T^i$ are evaluated by a fan
triangulation from the barycentre followed by a collapsed Gauss rule that is exact for
polynomials of degree $2k+4$, and integrals over the subfaces $F^i$ and over $T^\Gamma$ by
Gauss--Legendre rules of the same accuracy.

As is customary for hybrid high-order methods \cite{CicuttinDiPietroErn2018}, an important
step in the implementation is the choice of a basis for each of the polynomial spaces
appearing in the construction. For all $T\in\Th$, $i\in\{1,2\}$ and $l\in\{k,k+1\}$ we use the
hierarchical basis of $\Pk{l}(T^i)$ obtained by $L^2(T^i)$-orthonormalisation (Cholesky
factorisation of the mass matrix) of the monomials in the scaled variables
$\bm\xi_{T^i}=(\bx-\bx_{T^i})/h_T$, where $\bx_{T^i}$ is the barycentre of the subcell $T^i$;
a basis of $\bPk{l}(T^i)$, resp.\ of $\sPk{l}(T^i)$, is then obtained by taking the Cartesian
product with the canonical basis of $\mathbb{R}^d$, resp.\ with an orthonormal basis of the
symmetric $d\times d$ matrices. The subfaces are handled in the same way, using scaled
Legendre polynomials in the arclength, normalised by $|F^i|^{-1/2}$. Since the bases are
$L^2$-orthonormal on the subcells and subfaces, the mass matrices in \eqref{eq:E1},
\eqref{eq:E2} and in the local $L^2$-projections are identity matrices, and the scaling of
the face basis functions by $|F^i|^{-1/2}$ tames the potential ill-conditioning due to
small-cut faces, cf.\ \cite[Sect.~5.3]{BCDE2021}. For every cut cell, the reconstruction
\eqref{eq:E1} amounts to solving a linear system of size $\tfrac12d(d+1)\dim\Pk{k}(T^1)$
whose matrix is the identity plus the contribution of the interface term. The cell unknowns
are eliminated by static condensation, so that the global linear system is positive definite and
only involves the face unknowns; in the time-dependent case the matrix of the condensed
system is independent of the time step and is factorised once.

Concerning the agglomeration, we use a simplified two-stage version of
\cite[Alg.~1]{BCDE2021}: a cut cell $T$ is declared to have a small cut whenever
$\min_i|T^i|\le\alpha_{\rm agg}|T|$ with $\alpha_{\rm agg}=0.3$; such a cell is then merged
with the face neighbour $T'$ that has no small cut on the same side of $\Gamma$ and whose
area on the other side is the smallest, in the spirit of \cite[Rem.~4.4]{BCDE2021}. Each cell
may take part in at most one merge, so that the agglomeration never propagates and the
agglomerated cells are unions of two neighbouring cells. Since the faces are left unchanged
except for the one that is withdrawn, the agglomerated cells are polygons with more than four
faces, which the HHO framework of \Cref{ssec:meshes} accommodates without any modification.

The time discretisation is either the Newmark scheme of \Cref{ssec:newmark} with
$\beta=\tfrac14$ and $\gamma=\tfrac12$, or one of the SDIRK$(s,s+1)$ schemes of
\Cref{ssec:sdirk}. In the latter case the condensed matrix of \eqref{eq:rksolve} is
factorised once, and the cost of a time step is $s$ times that of a Newmark step. Nonhomogeneous Dirichlet conditions, when present, are enforced strongly by
prescribing the boundary face unknowns to be the $L^2$-projections of the exact displacement.
Setting $\bhe_h=\bhu_h-\Iint_h\bu$, we monitor the two error measures
\begin{equation}\label{eq:errmeasures}
E_e:=a_h(\bhe_h,\bhe_h)^{1/2},\qquad E_0:=\|\bm{e}_\mathcal{T}\|_\Omega,
\end{equation}
which by \Cref{thm:stab} are equivalent to $\tnorm{\bhe_h}_h$ and to the $L^2$-error of
\Cref{prop:L2}; $\tilde E_e$ and $\tilde E_0$ denote the corresponding relative errors,
normalised by $a_h(\Iint_h\bu,\Iint_h\bu)^{1/2}$ and by
$\|\bm{I}_\mathcal{T}\bu\|_\Omega$. In the time-dependent case we report
$E_e^\infty:=\max_n E_e(t^n)$ and $E_0^\infty:=\max_n E_0(t^n)$, in agreement with the
$C^0(0,t;\cdot)$ norms of \Cref{prop:energy,prop:L2}. As an elementary validation of the
implementation we checked that, for $k\in\{1,2,3\}$ and for any value of $\alpha$ and
$\beta$, every rigid-body motion $\bm{r}\in RM$ satisfies
$a_h(\Iint_h\bm{r},\bhw_h)=0$ for all $\bhw_h\in\Vh$ to machine precision, which exercises
the reconstruction, the stabilisation and the interface terms simultaneously.

\subsection{Test case 1: stationary problem with an unfitted slip interface}\label{ssec:case1}

We let $\Omega=(0,1)^2$ and $\Gamma=\{\bx\mid\bn\cdot\bx=c\}$ with
$\bn=\tfrac{1}{\sqrt5}(2,1)^{\mathsf T}$ and $c=\bn\cdot(\tfrac12,\tfrac12)^{\mathsf T}$, so
that $\Gamma$ is a straight segment crossing the square obliquely and is resolved by no
Cartesian mesh. We take $E_1=1$, $\nu_1=0.25$ in $\Omega_1=\{\bn\cdot\bx<c\}$ and $E_2=10$,
$\nu_2=0.3$ in $\Omega_2$, so that $\mu_1=0.4<\mu_2\simeq3.85$ as required by
\eqref{eq:softside}, and unless stated otherwise $\alpha=\beta=5\cdot10^{-2}$.

We manufacture an exact solution satisfying the interface conditions of \eqref{eq:model} with
$\bg_D=\bm{0}$ and $\bg_N=\bm{0}$ \emph{exactly for every} $\alpha\ge0$, $\beta\ge0$, by a
Taylor expansion in the normal direction, which is the analogue for an oblique interface of
the construction used in the fitted case \cite{HuangBurman2024}. Let
$\bt=(-n_2,n_1)^{\mathsf T}$, $s(\bx)=\bn\cdot\bx-c$ and $\tau(\bx)=\bt\cdot\bx$, and let
\begin{equation}\label{eq:u2}
\bu_2:=\mathbf{curl}\,\psi+\frac{\bx}{2\lambda_2},\qquad
\psi(\bx)=\sin\big(\pi x_1+\tfrac\pi4\big)\sin(2\pi x_2),
\end{equation}
so that $\nabla\!\cdot\!\bu_2=\lambda_2^{-1}$, which keeps $\lambda|\nabla\!\cdot\!\bu|$
bounded as $\nu\to\tfrac12$. On $\Gamma$, parametrised by $\tau$, set
$\bm{T}:=\bsig_2(\bu_2)\bn$, $\bm{R}:=\bm{T}-\bsig_1(\bu_2)\bn$ and
\begin{equation}\label{eq:VW}
\bm{V}:=-\bK\bm{T},\qquad
W_n:=\frac{\bm{R}\cdot\bn-\lambda_1\partial_\tau V_t}{2\mu_1+\lambda_1},\qquad
W_t:=\frac{\bm{R}\cdot\bt}{\mu_1}-\partial_\tau V_n,\qquad \bm{W}:=W_n\bn+W_t\bt,
\end{equation}
where $V_n=\bm{V}\cdot\bn$ and $V_t=\bm{V}\cdot\bt$, and finally
\begin{equation}\label{eq:u1}
\bu_1:=\bu_2+\bm{V}(\tau)+s\,\bm{W}(\tau)
-\tfrac12s^2\,\partial_\tau W_t(\tau)\,\bn,\qquad \bff_i:=-\nabla\!\cdot\!\bsig_i(\bu_i).
\end{equation}
By construction $\bsig_1(\bu_1)\bn=\bsig_2(\bu_2)\bn$ and
$\jump{\bu}=\bm{V}=-\bK\bsig(\bu_1)\bn$ on $\Gamma$; the quadratic term in \eqref{eq:u1} is
included so that $\nabla\!\cdot\!\bu_1=\mathcal{O}(\lambda_1^{-1})$ irrespective of $\alpha$
and $\beta$. The fields $\bu_i$ and $\bff_i$ are computed by symbolic differentiation, and we
verified that the residuals of the two interface conditions are at the level of the machine
precision for every value of the compliancy used below.

\Cref{fig:mesh} displays two meshes of the sequence together with the agglomerated cells. On
this configuration the interface cuts most cells in a rather unbalanced way, so that
essentially every cut cell is agglomerated with one of its neighbours. The errors
\eqref{eq:errmeasures} are reported in \Cref{tab:conv1} and displayed in
\Cref{fig:conv1}: the rates $k+1$ for $E_e$ and $k+2$ for $E_0$ predicted by
\Cref{prop:energy,prop:L2} are observed for $k\in\{1,2,3\}$.

\begin{figure}[htbp]\centering
\includegraphics[width=\linewidth]{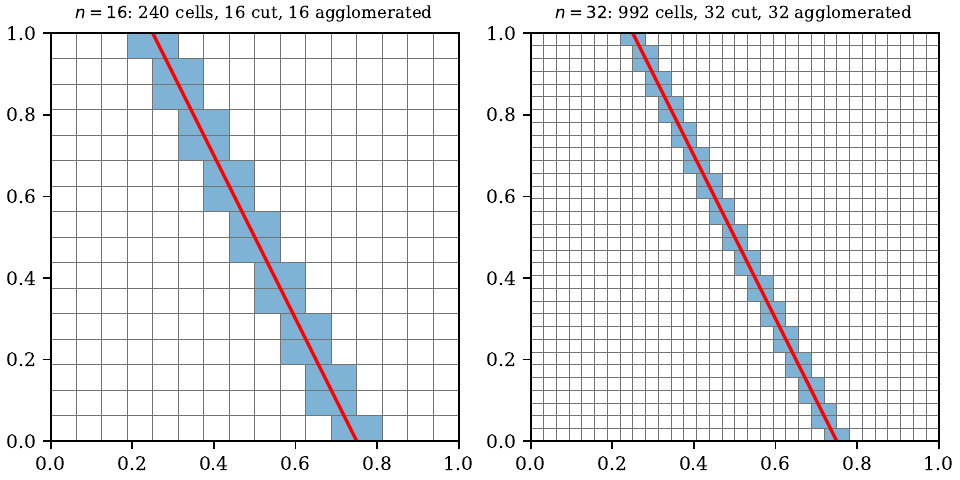}
\caption{Test case 1. Background Cartesian meshes, interface $\Gamma$ (red) and agglomerated
cells (shaded).}\label{fig:mesh}
\end{figure}

\begin{table}[htbp]\centering
\caption{Test case 1 (stationary, $\alpha=\beta=5\cdot10^{-2}$). Errors
\eqref{eq:errmeasures} and observed convergence rates.}\label{tab:conv1}
\small\setlength{\tabcolsep}{4pt}
\begin{tabular}{rrrrrr@{\quad}rr@{\quad}rr}
\toprule
$k$ & $n$ & card$(\mathcal{T}_h)$ & cut & aggl. & $N_{\rm dof}$ & $E_e$ & rate & $E_0$ & rate\\
\midrule
1 & 4 & 12 & 4 & 4 & 280 & $7.51\cdot10^{1}$ & -- & $5.46\cdot10^{0}$ & --\\
1 & 8 & 56 & 8 & 8 & 1200 & $2.32\cdot10^{1}$ & 1.70 & $7.99\cdot10^{-1}$ & 2.77\\
1 & 16 & 240 & 16 & 16 & 4960 & $6.33\cdot10^{0}$ & 1.87 & $1.02\cdot10^{-1}$ & 2.96\\
1 & 32 & 992 & 32 & 32 & 20160 & $1.65\cdot10^{0}$ & 1.94 & $1.29\cdot10^{-2}$ & 2.99\\
\midrule
2 & 4 & 12 & 4 & 4 & 452 & $1.83\cdot10^{1}$ & -- & $1.22\cdot10^{0}$ & --\\
2 & 8 & 56 & 8 & 8 & 1928 & $2.49\cdot10^{0}$ & 2.88 & $7.45\cdot10^{-2}$ & 4.04\\
2 & 16 & 240 & 16 & 16 & 7952 & $3.20\cdot10^{-1}$ & 2.96 & $4.31\cdot10^{-3}$ & 4.11\\
2 & 32 & 992 & 32 & 32 & 32288 & $4.06\cdot10^{-2}$ & 2.98 & $2.59\cdot10^{-4}$ & 4.06\\
\midrule
3 & 4 & 12 & 4 & 4 & 656 & $3.18\cdot10^{0}$ & -- & $1.80\cdot10^{-1}$ & --\\
3 & 8 & 56 & 8 & 8 & 2784 & $2.18\cdot10^{-1}$ & 3.87 & $6.31\cdot10^{-3}$ & 4.83\\
3 & 16 & 240 & 16 & 16 & 11456 & $1.48\cdot10^{-2}$ & 3.88 & $2.27\cdot10^{-4}$ & 4.79\\
3 & 32 & 992 & 32 & 32 & 46464 & $9.66\cdot10^{-4}$ & 3.94 & $7.64\cdot10^{-6}$ & 4.89\\
\bottomrule
\end{tabular}
\end{table}

\begin{figure}[htbp]\centering
\includegraphics[width=\linewidth]{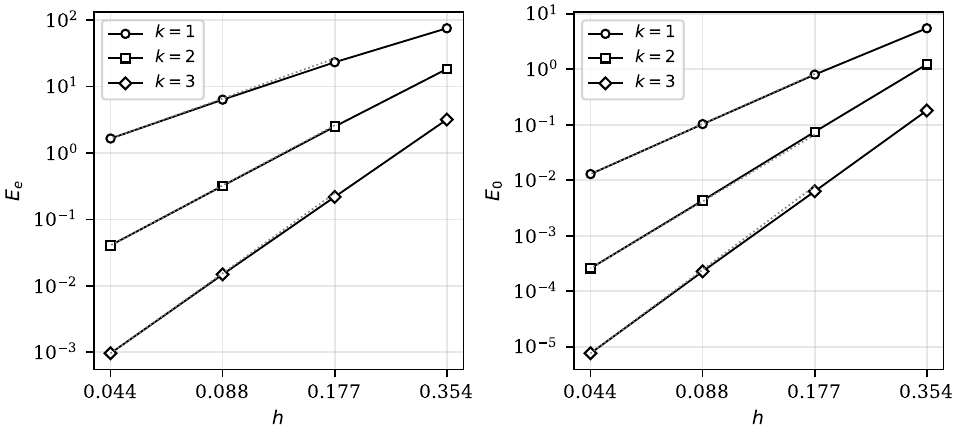}
\caption{Test case 1. Energy error $E_e$ (left) and $L^2$-error $E_0$ (right) versus $h$; the
dotted lines have slopes $k+1$ and $k+2$.}\label{fig:conv1}
\end{figure}

\subsection{Robustness with respect to the compliancy}\label{ssec:robust}

Since the exact solution itself depends on $\alpha$ and $\beta$, we report in
\Cref{tab:robust} the \emph{relative} errors $\tilde E_e$ and $\tilde E_0$ on the mesh with
$n=16$, for $\alpha=\beta$ ranging over sixteen orders of magnitude and for the degenerate
combinations in which one of the two parameters vanishes. Recall from \Cref{rem:regimes} that
$\bn^{\mathsf T}\bS_h\bn=(\beta+h_T/\delta)^{-1}$ and
$\bt^{\mathsf T}\bS_h\bt=(\alpha+h_T/\delta)^{-1}$; since $h_T/\delta$ ranges from
$4\cdot10^{-2}$ to $7\cdot10^{-2}$ on the cells adjacent to $\Gamma$ in that mesh, the whole
range from the Nitsche-like penalty regime $\alpha,\beta\ll h_T/\delta$ to the
compliancy-dominated regime $\alpha,\beta\gg h_T/\delta$ is covered. Both the errors and the
observed rates are essentially independent of $(\alpha,\beta)$ --- the relative errors vary
by at most $50\%$ over the whole table --- as predicted by the analysis, in which the
constants do not depend on the compliancy. The rows $\alpha=\beta=10^4$ and
$\alpha=\beta=10^{12}$ are identical to five significant digits: the two subdomains are then
numerically decoupled, which is the correct free-interface limit of \Cref{rem:regimes}.

\begin{table}[htbp]\centering
\caption{Test case 1. Robustness with respect to the compliancy parameters: relative errors
on the mesh with $n=16$.}\label{tab:robust}
\small\setlength{\tabcolsep}{4pt}
\begin{tabular}{rr@{\qquad}rr@{\qquad}rr@{\qquad}rr}
\toprule
& & \multicolumn{2}{c}{$k=1$} & \multicolumn{2}{c}{$k=2$} & \multicolumn{2}{c}{$k=3$}\\
\cmidrule(lr){3-4}\cmidrule(lr){5-6}\cmidrule(lr){7-8}
$\alpha$ & $\beta$ & $\tilde E_e$ & $\tilde E_0$ & $\tilde E_e$ & $\tilde E_0$ & $\tilde E_e$ & $\tilde E_0$\\
\midrule
$0$ & $0$ & $1.48\cdot10^{-2}$ & $1.62\cdot10^{-3}$ & $7.42\cdot10^{-4}$ & $6.63\cdot10^{-5}$ & $3.42\cdot10^{-5}$ & $3.49\cdot10^{-6}$\\
$10^{-4}$ & $10^{-4}$ & $1.48\cdot10^{-2}$ & $1.62\cdot10^{-3}$ & $7.42\cdot10^{-4}$ & $6.63\cdot10^{-5}$ & $3.42\cdot10^{-5}$ & $3.49\cdot10^{-6}$\\
$10^{-2}$ & $10^{-2}$ & $1.49\cdot10^{-2}$ & $1.62\cdot10^{-3}$ & $7.44\cdot10^{-4}$ & $6.67\cdot10^{-5}$ & $3.43\cdot10^{-5}$ & $3.51\cdot10^{-6}$\\
$10^{0}$ & $10^{0}$ & $1.75\cdot10^{-2}$ & $1.92\cdot10^{-3}$ & $9.25\cdot10^{-4}$ & $9.04\cdot10^{-5}$ & $4.50\cdot10^{-5}$ & $4.37\cdot10^{-6}$\\
$10^{4}$ & $10^{4}$ & $2.05\cdot10^{-2}$ & $2.28\cdot10^{-3}$ & $1.07\cdot10^{-3}$ & $1.10\cdot10^{-4}$ & $5.17\cdot10^{-5}$ & $4.93\cdot10^{-6}$\\
$10^{12}$ & $10^{12}$ & $2.05\cdot10^{-2}$ & $2.28\cdot10^{-3}$ & $1.07\cdot10^{-3}$ & $1.10\cdot10^{-4}$ & $5.17\cdot10^{-5}$ & $4.93\cdot10^{-6}$\\
\midrule
$0$ & $10^{-2}$ & $1.49\cdot10^{-2}$ & $1.62\cdot10^{-3}$ & $7.45\cdot10^{-4}$ & $6.65\cdot10^{-5}$ & $3.43\cdot10^{-5}$ & $3.50\cdot10^{-6}$\\
$10^{-2}$ & $0$ & $1.48\cdot10^{-2}$ & $1.61\cdot10^{-3}$ & $7.42\cdot10^{-4}$ & $6.66\cdot10^{-5}$ & $3.42\cdot10^{-5}$ & $3.50\cdot10^{-6}$\\
$0$ & $10^{0}$ & $1.71\cdot10^{-2}$ & $1.16\cdot10^{-3}$ & $9.46\cdot10^{-4}$ & $5.05\cdot10^{-5}$ & $4.63\cdot10^{-5}$ & $2.80\cdot10^{-6}$\\
$10^{0}$ & $0$ & $1.51\cdot10^{-2}$ & $1.77\cdot10^{-3}$ & $7.61\cdot10^{-4}$ & $7.99\cdot10^{-5}$ & $3.59\cdot10^{-5}$ & $3.83\cdot10^{-6}$\\
\bottomrule
\end{tabular}
\end{table}

\subsection{Small cuts and cell agglomeration}\label{ssec:smallcut}

To assess the role of the agglomeration procedure, we consider a configuration in which the
size of the small cut is controlled explicitly: on the mesh of $\Omega=(0,1)^2$ with $n=16$
we take $\bn=(0,1)^{\mathsf T}$ and $c=\tfrac12+\xi h$ with $\xi\in(0,\tfrac12]$, so that
\emph{every} cell of one row is cut, the subcell in $\Omega_2$ having area $\xi h^2$; the
manufactured solution of \Cref{ssec:case1} is used with the same data. \Cref{tab:smallcut}
reports the relative errors and the spectral condition number of the diagonally scaled
stiffness matrix as $\xi\to0$, with and without agglomeration. Without agglomeration the
condition number grows like $\xi^{-1}$, reaching $2\cdot10^5$ ($k=1$) and $7\cdot10^5$
($k=2$) for $\xi=10^{-4}$, whereas with agglomeration it is uniform in $\xi$, which is the
expected behaviour and the reason for \Cref{ass:cut}. The errors themselves are almost
unaffected over this range of $\xi$, because the linear systems are solved by a sparse direct
method; the ill-conditioning is nevertheless fatal for iterative solvers and, for smaller
$\xi$ or higher polynomial degrees, also for the accuracy. We emphasise that no
agglomeration of the \emph{faces} is performed: the scaling of the face basis functions by
$|F^i|^{-1/2}$ is sufficient to tame the small-cut faces, in agreement with the observations
of \cite[Sect.~5.3]{BCDE2021}.

\begin{table}[htbp]\centering
\caption{Test case 1 with a controlled small cut of relative size $\xi$ ($n=16$,
$\alpha=\beta=5\cdot10^{-2}$): relative errors and spectral condition number of the
diagonally scaled stiffness matrix.}\label{tab:smallcut}
\small\setlength{\tabcolsep}{4pt}
\begin{tabular}{r@{\qquad}rrr@{\qquad}rrr}
\toprule
& \multicolumn{3}{c}{with agglomeration} & \multicolumn{3}{c}{without agglomeration}\\
\cmidrule(lr){2-4}\cmidrule(lr){5-7}
$\xi$ & $\tilde E_e$ & $\tilde E_0$ & $\mathrm{cond}$ & $\tilde E_e$ & $\tilde E_0$ & $\mathrm{cond}$\\
\midrule
\multicolumn{7}{l}{\itshape $k=1$}\\
$5\cdot10^{-1}$ & $4.77\cdot10^{-3}$ & $1.63\cdot10^{-4}$ & $2.2\cdot10^{3}$ & $4.77\cdot10^{-3}$ & $1.63\cdot10^{-4}$ & $2.2\cdot10^{3}$\\
$10^{-1}$ & $6.10\cdot10^{-3}$ & $2.30\cdot10^{-4}$ & $2.2\cdot10^{3}$ & $5.95\cdot10^{-3}$ & $2.07\cdot10^{-4}$ & $2.0\cdot10^{3}$\\
$10^{-2}$ & $6.47\cdot10^{-3}$ & $2.37\cdot10^{-4}$ & $2.0\cdot10^{3}$ & $6.31\cdot10^{-3}$ & $2.20\cdot10^{-4}$ & $3.3\cdot10^{3}$\\
$10^{-3}$ & $6.51\cdot10^{-3}$ & $2.38\cdot10^{-4}$ & $2.0\cdot10^{3}$ & $6.34\cdot10^{-3}$ & $2.21\cdot10^{-4}$ & $2.2\cdot10^{4}$\\
$10^{-4}$ & $6.51\cdot10^{-3}$ & $2.38\cdot10^{-4}$ & $2.0\cdot10^{3}$ & $6.35\cdot10^{-3}$ & $2.21\cdot10^{-4}$ & $2.2\cdot10^{5}$\\
\midrule
\multicolumn{7}{l}{\itshape $k=2$}\\
$5\cdot10^{-1}$ & $2.20\cdot10^{-4}$ & $5.59\cdot10^{-6}$ & $3.4\cdot10^{3}$ & $2.20\cdot10^{-4}$ & $5.59\cdot10^{-6}$ & $3.4\cdot10^{3}$\\
$10^{-1}$ & $2.84\cdot10^{-4}$ & $8.16\cdot10^{-6}$ & $3.0\cdot10^{3}$ & $2.76\cdot10^{-4}$ & $7.14\cdot10^{-6}$ & $3.5\cdot10^{3}$\\
$10^{-2}$ & $2.99\cdot10^{-4}$ & $8.25\cdot10^{-6}$ & $3.0\cdot10^{3}$ & $2.90\cdot10^{-4}$ & $7.53\cdot10^{-6}$ & $8.6\cdot10^{3}$\\
$10^{-3}$ & $3.01\cdot10^{-4}$ & $8.26\cdot10^{-6}$ & $3.0\cdot10^{3}$ & $2.91\cdot10^{-4}$ & $7.58\cdot10^{-6}$ & $7.1\cdot10^{4}$\\
$10^{-4}$ & $3.01\cdot10^{-4}$ & $8.26\cdot10^{-6}$ & $3.0\cdot10^{3}$ & $2.91\cdot10^{-4}$ & $7.58\cdot10^{-6}$ & $7.0\cdot10^{5}$\\
\bottomrule
\end{tabular}
\end{table}

\subsection{The quasi-incompressible limit}\label{ssec:incomp}

\Cref{tab:incomp} reports the behaviour of the method in the quasi-incompressible limit,
obtained by letting $\nu_1=\nu_2=\nu\to\tfrac12$ at fixed $E_1,E_2$, so that
$\lambda_1=\lambda_2/10\to+\infty$ while $\lambda\|\nabla\!\cdot\!\bu\|_\infty$ stays
bounded. With the stabilisation \eqref{eq:stab}, the errors grow like $\lambda^{1/2}$ while
the observed rates remain optimal, which is precisely the behaviour predicted by the factor
$\Phi_k$ of \eqref{eq:sharpapprox} and analysed in \Cref{rem:locking}. The last four columns
report the same quantities computed with the anisotropic stabilisation weight
$h_T^{-1}(2\mu_i\bI+d\lambda_i\bn_T\otimes\bn_T)$ borrowed from the fitted method
\cite{HuangBurman2024}: the $L^2$-error is then uniform in $\lambda$ over four orders of
magnitude, whereas $E_e=a_h(\bhe_h,\bhe_h)^{1/2}$ still grows like $\lambda^{1/2}$, exactly
as in the fitted case \cite[Rem.~6.1]{HuangBurman2024}, because the error is measured against
the $L^2$-projection based interpolate $\Iint_h$, for which
$\|(\bu-\bm{I}_T\bu)\cdot\bn\|_{\partial (T^i)}$ is only of order $h^{k+3/2}$. A proof of the
$\lambda$-uniformity of the $L^2$-error with the anisotropic weight would require an
$H(\dive)$-conforming interpolate on the cut subcells, which is not available; we leave this
question to future work and we use the simpler stabilisation \eqref{eq:stab} in all the other
experiments.

\begin{table}[htbp]\centering
\caption{Test case 1. Quasi-incompressible limit ($\alpha=\beta=5\cdot10^{-2}$,
$\lambda_1=\lambda_2/10$): errors on the mesh with $n=16$ and rates observed on the last
refinement, for the two stabilisation weights.}\label{tab:incomp}
\footnotesize\setlength{\tabcolsep}{3pt}
\begin{tabular}{rrrr@{\quad}rrrr@{\quad}rrrr}
\toprule
& & & & \multicolumn{4}{c}{isotropic weight \eqref{eq:stab}} & \multicolumn{4}{c}{anisotropic weight}\\
\cmidrule(lr){5-8}\cmidrule(lr){9-12}
$k$ & $\nu$ & $\lambda_1$ & $\lambda\|\nabla\!\cdot\!\underline{u}\|_\infty$ & $E_e$ & rate & $E_0$ & rate & $E_e$ & rate & $E_0$ & rate\\
\midrule
1 & 0.3 & $5.77\cdot10^{-1}$ & 34.1 & $5.46\cdot10^{0}$ & 1.87 & $8.14\cdot10^{-2}$ & 2.97 & $8.45\cdot10^{0}$ & 1.87 & $1.72\cdot10^{-1}$ & 2.89\\
1 & 0.49 & $1.64\cdot10^{1}$ & 67.2 & $8.52\cdot10^{0}$ & 1.94 & $2.69\cdot10^{-2}$ & 3.33 & $8.54\cdot10^{0}$ & 1.91 & $1.59\cdot10^{-1}$ & 2.87\\
1 & 0.4999 & $1.67\cdot10^{3}$ & 69.5 & $1.00\cdot10^{2}$ & 1.97 & $1.89\cdot10^{-1}$ & 2.97 & $5.86\cdot10^{1}$ & 1.97 & $1.60\cdot10^{-1}$ & 2.88\\
1 & 0.499999 & $1.67\cdot10^{5}$ & 69.5 & $1.01\cdot10^{3}$ & 1.97 & $8.93\cdot10^{-1}$ & 3.09 & $5.83\cdot10^{2}$ & 1.97 & $1.60\cdot10^{-1}$ & 2.88\\
\midrule
2 & 0.3 & $5.77\cdot10^{-1}$ & 34.1 & $2.79\cdot10^{-1}$ & 2.97 & $3.45\cdot10^{-3}$ & 4.12 & $3.95\cdot10^{-1}$ & 2.94 & $6.44\cdot10^{-3}$ & 3.99\\
2 & 0.49 & $1.64\cdot10^{1}$ & 67.2 & $4.06\cdot10^{-1}$ & 2.97 & $1.32\cdot10^{-3}$ & 4.38 & $4.20\cdot10^{-1}$ & 2.97 & $5.38\cdot10^{-3}$ & 3.92\\
2 & 0.4999 & $1.67\cdot10^{3}$ & 69.5 & $5.09\cdot10^{0}$ & 3.01 & $1.67\cdot10^{-2}$ & 3.71 & $3.56\cdot10^{0}$ & 3.02 & $5.46\cdot10^{-3}$ & 3.95\\
2 & 0.499999 & $1.67\cdot10^{5}$ & 69.5 & $5.29\cdot10^{1}$ & 3.00 & $6.20\cdot10^{-2}$ & 4.46 & $3.55\cdot10^{1}$ & 3.02 & $5.46\cdot10^{-3}$ & 3.95\\
\midrule
3 & 0.3 & $5.77\cdot10^{-1}$ & 34.1 & $1.31\cdot10^{-2}$ & 3.88 & $1.83\cdot10^{-4}$ & 4.81 & $1.74\cdot10^{-2}$ & 3.89 & $2.57\cdot10^{-4}$ & 4.91\\
3 & 0.49 & $1.64\cdot10^{1}$ & 67.2 & $1.39\cdot10^{-2}$ & 3.97 & $5.28\cdot10^{-5}$ & 5.17 & $1.57\cdot10^{-2}$ & 3.95 & $1.59\cdot10^{-4}$ & 4.95\\
3 & 0.4999 & $1.67\cdot10^{3}$ & 69.5 & $1.77\cdot10^{-1}$ & 4.02 & $2.32\cdot10^{-4}$ & 5.60 & $1.27\cdot10^{-1}$ & 4.02 & $1.64\cdot10^{-4}$ & 4.96\\
3 & 0.499999 & $1.67\cdot10^{5}$ & 69.5 & $1.83\cdot10^{0}$ & 4.03 & $4.02\cdot10^{-3}$ & 4.89 & $1.26\cdot10^{0}$ & 4.02 & $1.64\cdot10^{-4}$ & 4.96\\
\bottomrule
\end{tabular}
\end{table}

\subsection{Test case 1 in the time domain}\label{ssec:case1dyn}

Since the exact stationary solution $\bu_s$ of \Cref{ssec:case1} satisfies homogeneous interface
conditions and the problem is linear, $\bu(\bx,t):=\chi(t)\bu_s(\bx)$ solves \eqref{eq:model}
with $\bff(t)=\chi(t)\bff_s+\chi''(t)\varrho\,\bu_s$, $\bu_0=\chi(0)\bu_s$,
$\bq_0=\chi'(0)\bu_s$ and time-dependent Dirichlet data on $\partial\Omega$, for any
$\chi\in C^2(\overline J)$. We take $\chi(t)=\cos(\omega t)$ with $\omega=2$, $\varrho=1$ and
$T=\tfrac12$. \Cref{tab:dynspace} reports the errors $E_e^\infty$ and $E_0^\infty$ obtained
with $\Delta t=1/2000$, which is small enough for the time-discretisation error to be
negligible: the rates $k+1$ and $k+2$ of \Cref{prop:energy,prop:L2} are recovered.
\Cref{tab:dyntime} reports, for $k=2$ and $n=8$, the difference at the final time between the
solutions computed with a given $\Delta t$ and with $\Delta t=1/3200$; the observed rates
approach the second order of the Newmark scheme.

\begin{table}[htbp]\centering
\begin{minipage}[t]{0.56\linewidth}\centering
\caption{Test case 1 in the time domain: errors and rates in space
($\Delta t=1/2000$, $T=\tfrac12$).}\label{tab:dynspace}
\small\setlength{\tabcolsep}{4pt}
\begin{tabular}{rrrrrrr}
\toprule
$k$ & $n$ & $N_{\rm dof}$ & $E_e^\infty$ & rate & $E_0^\infty$ & rate\\
\midrule
1 & 4 & 280 & $1.04\cdot10^{2}$ & -- & $9.34\cdot10^{0}$ & --\\
1 & 8 & 1200 & $3.41\cdot10^{1}$ & 1.61 & $1.47\cdot10^{0}$ & 2.67\\
1 & 16 & 4960 & $9.42\cdot10^{0}$ & 1.85 & $1.97\cdot10^{-1}$ & 2.90\\
1 & 32 & 20160 & $2.45\cdot10^{0}$ & 1.94 & $2.54\cdot10^{-2}$ & 2.96\\
\midrule
2 & 4 & 452 & $2.56\cdot10^{1}$ & -- & $2.06\cdot10^{0}$ & --\\
2 & 8 & 1928 & $3.57\cdot10^{0}$ & 2.84 & $1.34\cdot10^{-1}$ & 3.94\\
2 & 16 & 7952 & $4.59\cdot10^{-1}$ & 2.96 & $8.01\cdot10^{-3}$ & 4.07\\
2 & 32 & 32288 & $5.81\cdot10^{-2}$ & 2.98 & $4.89\cdot10^{-4}$ & 4.03\\
\midrule
3 & 4 & 656 & $4.67\cdot10^{0}$ & -- & $3.16\cdot10^{-1}$ & --\\
3 & 8 & 2784 & $3.30\cdot10^{-1}$ & 3.82 & $1.15\cdot10^{-2}$ & 4.79\\
3 & 16 & 11456 & $2.24\cdot10^{-2}$ & 3.88 & $4.18\cdot10^{-4}$ & 4.78\\
\bottomrule
\end{tabular}
\end{minipage}\hfill
\begin{minipage}[t]{0.4\linewidth}\centering
\caption{Test case 1 in the time domain: convergence in time ($k=2$, $n=8$).}
\label{tab:dyntime}
\begin{tabular}{rrr}
\toprule
$\Delta t$ & $\|\bm{u}_\mathcal{T}^N-\bm{u}^N_{\mathcal{T},\rm ref}\|_\Omega$ & rate\\
\midrule
$1/50$ & $1.82\cdot10^{-2}$ & --\\
$1/100$ & $5.48\cdot10^{-3}$ & 1.73\\
$1/200$ & $1.60\cdot10^{-3}$ & 1.77\\
$1/400$ & $4.92\cdot10^{-4}$ & 1.71\\
$1/800$ & $1.34\cdot10^{-4}$ & 1.87\\
\bottomrule
\end{tabular}
\end{minipage}
\end{table}

\subsection{Test case 2: wave propagation across a perfectly bonded unresolved interface}
\label{ssec:case2}

We now consider the propagation of an elastic wave in the two-dimensional heterogeneous
domain of \cite[Sect.~6.2]{BDES2021}, with the material interface rotated so that it is
\emph{not} resolved by the Cartesian background mesh. Let $\Omega=(-\tfrac32,\tfrac32)^2$ and
$\Gamma=\{\bx\mid\bn\cdot\bx=0\}$ with
$\bn=(-\sin\varphi,\cos\varphi)^{\mathsf T}$, $\varphi=\pi/12$; $\Omega_1$ is the lower
subdomain. The material properties are $\varrho_1=\varrho_2=1$,
the speed of S-waves $c_{{\rm S},1}=c_{{\rm S},2}/2=1$, the speed of P-waves $c_{{\rm P},1}=c_{{\rm P},2}/2=\sqrt3$, i.e.\
$\mu_1=\lambda_1=1$ and $\mu_2=\lambda_2=4$. The simulation time is $T=1$, the source term is
$\bff=\bm{0}$, homogeneous Dirichlet conditions are enforced on $\partial\Omega$, and the
initial conditions are $\bu_0=\bm{0}$ and
\begin{equation}\label{eq:ricker}
\bq_0(\bx)=\theta\exp\Big(-\pi^2\frac{r^2}{\lambda_w^2}\Big)\,(x_1-x_c,x_2-y_c)^{\mathsf T},
\qquad r^2=(x_1-x_c)^2+(x_2-y_c)^2,
\end{equation}
with $\theta=10^{-2}$, $\lambda_w=c_{{\rm P},2}/f_c$, $f_c=5$ and
$(x_c,y_c)=(0,\tfrac23)$, i.e.\ a Ricker wave centred in $\Omega_2$. Compared with
\cite{BDES2021} we halve the central frequency from $f_c=10$ to $f_c=5$, so that the pulse is
adequately resolved on the meshes that are within reach of our verification code: the number
of points per wavelength is $\lambda_w/h=7.4$ for $n=32$ and $14.8$ for $n=64$. The
wave first propagates in $\Omega_2$, is then partially transmitted to $\Omega_1$ and is later
reflected at the boundary of $\Omega$. In this test case the interface is perfectly bonded,
$\bK= \mathsf{0}$, so that the method reduces to the Nitsche-like regime of \Cref{rem:regimes};
the discrete jump is then penalised with the weight $\delta h_T^{-1}$.

As in \cite{BDES2021} we track the two Cartesian components of the velocity at two sensors,
$S_1=(\tfrac13,-\tfrac13)\in\Omega_1$ and $S_2=(\tfrac13,\tfrac13)\in\Omega_2$. Since no
analytical solution is available for this configuration, we use as a reference the solution
computed with $k=4$ on the finest mesh. All the computations reported in this subsection and
in \Cref{ssec:case3} use the SDIRK$(3,4)$ scheme with $\Delta t=1/640$, which is small enough
for the temporal error to be negligible; this is quantified below, where the choice of the
time-stepping scheme is discussed. \Cref{tab:case2} reports the maximum relative error, in
percent, over the discrete time nodes of $[0,t_{*i}]$, normalised by the maximum value in
time of the corresponding velocity component of the reference solution; as in
\cite{BDES2021} we set $t_{*1}=0.9$ and $t_{*2}=0.6$, which are the times at which the waves
reflected at $\partial\Omega$ reach the sensors $S_1$ and $S_2$. Meshes with fewer than about
seven points per wavelength are not reported, since they do not resolve the pulse: for
$n=16$, i.e.\ $\lambda_w/h=3.7$, the errors exceed $70\%$ for all three polynomial degrees.

The convergence with respect to both the mesh size and the polynomial degree is clear. At
$n=64$ the error on the most demanding component falls from $16\%$ for $k=1$ to $2.5\%$ for
$k=2$ and $0.09\%$ for $k=3$; the finest configuration reproduces the reference to better
than one tenth of a percent on all four components, although no mesh face is aligned with
$\Gamma$ and every cut cell of the mesh carries doubled unknowns. The error is largest for
$v_x$ at $S_1$, which is the smallest component, its amplitude being one order of magnitude
below that of the three others (see \Cref{fig:traces2}); the measure is deliberately severe,
being the maximum over the whole time window of a \emph{pointwise} velocity component,
normalised by the maximum of that component alone.

The choice of the time-stepping scheme deserves a comment. \Cref{tab:dtstudy} reports the
same errors for the fixed space discretisation $k=3$, $n=32$, as a function of $\Delta t$,
for the Newmark scheme and for SDIRK$(3,4)$. At $\Delta t=1/640$ the two schemes agree to
within half a percentage point, which confirms that the entries of \Cref{tab:case2} measure
the space discretisation alone. When $\Delta t$ is increased the Newmark errors deteriorate
rapidly --- by a factor of six on $v_y$ at $S_2$ between $\Delta t=1/640$ and
$\Delta t=1/80$ --- whereas SDIRK$(3,4)$ has already reached the level of the space error at
$\Delta t=1/160$, a time step four times larger. Since one SDIRK$(3,4)$ step costs three
Newmark steps, the net saving is a factor $4/3$ here, and it grows with the accuracy required
of the space discretisation; this, together with the mild and controlled dissipation reported
in the last column, is why we use SDIRK$(3,4)$ throughout. \Cref{tab:timeconv} confirms the expected temporal orders $2$,
$3$ and $4$ of the three schemes of \Cref{sec:time} on the same configuration; the asymptotic
rates are reached once $\Delta t$ resolves the fastest modes of the discrete operator.

The velocity traces are displayed in \Cref{fig:traces2} and snapshots of the two velocity
components at four times are shown in \Cref{fig:snaps2}; the transmitted and reflected wave
fronts cross the interface without any visible spurious reflection. Finally,
\Cref{fig:energy2} illustrates the energy behaviour of the two schemes: the Newmark scheme
conserves \eqref{eq:discenergy} to machine precision, as predicted by
\eqref{eq:energybalance}, whereas SDIRK$(3,4)$ dissipates a small amount of energy, about
$2\cdot10^{-5}$ of the initial energy at $\Delta t=1/640$ and decreasing by a factor of
roughly $30$ at each halving of $\Delta t$,
in agreement with \Cref{rem:rkenergy}.

\begin{table}[htbp]\centering
\caption{Test case 2. Maximum relative error (in \%) of the velocity components at the two
sensors over $[0,t_{*i}]$, with respect to the reference solution ($k=4$, $n=64$,
$N_{\rm dof}=256\,584$); SDIRK$(3,4)$ with $\Delta t=1/640$.}\label{tab:case2}
\begin{tabular}{rrrrrrrr}
\toprule
$k$ & $n$ & $\lambda_w/h$ & $N_{\rm dof}$ & $v_x(S_1)$ & $v_y(S_1)$ & $v_x(S_2)$ & $v_y(S_2)$\\
\midrule
1 & 32 & 7.4 & 20928 & 80.93 & 58.01 & 16.95 & 34.12\\
1 & 64 & 14.8 & 82848 & 16.05 & 12.02 & 1.69 & 2.89\\
\midrule
2 & 32 & 7.4 & 33464 & 17.66 & 22.10 & 2.83 & 9.16\\
2 & 64 & 14.8 & 132516 & 2.51 & 0.92 & 0.26 & 0.17\\
\midrule
3 & 32 & 7.4 & 48072 & 8.08 & 4.05 & 1.25 & 1.27\\
3 & 64 & 14.8 & 190428 & 0.09 & 0.09 & 0.02 & 0.01\\
\bottomrule
\end{tabular}
\end{table}

\begin{figure}[htbp]\centering
\includegraphics[width=\linewidth]{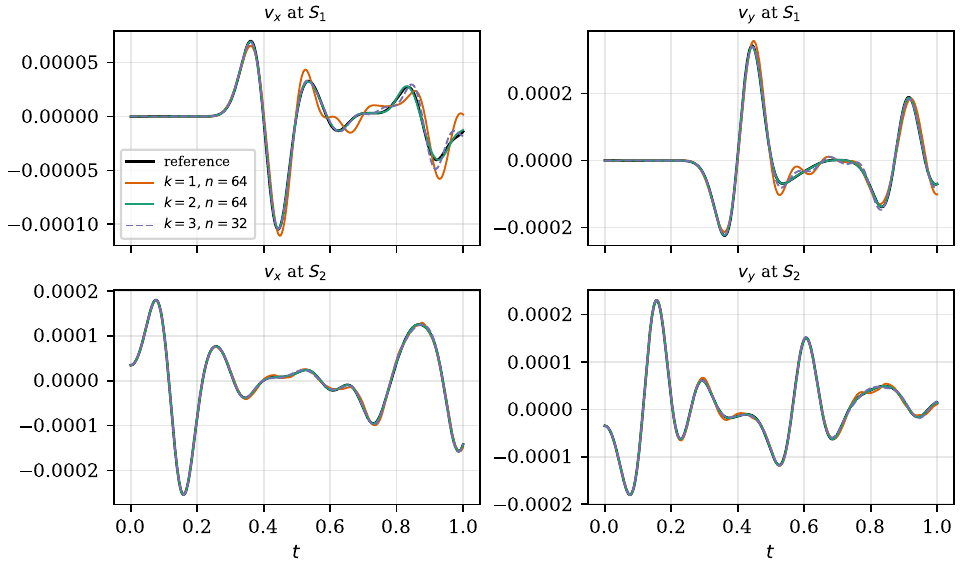}
\caption{Test case 2. Velocity components at the two sensors as a function of
time.}\label{fig:traces2}
\end{figure}

\begin{figure}[htbp]\centering
\includegraphics[width=\linewidth]{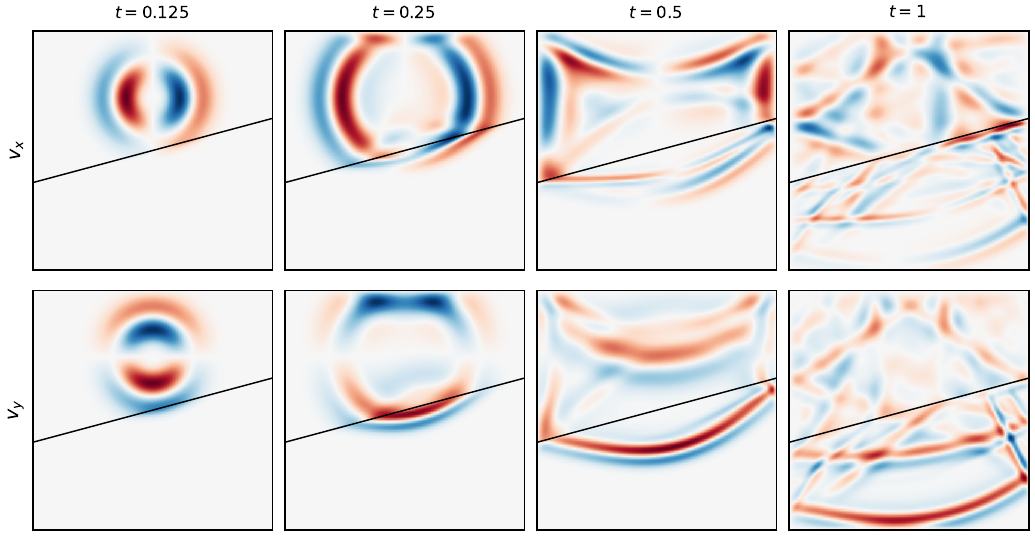}
\caption{Test case 2. Snapshots of $v_x$ (top) and $v_y$ (bottom) at
$t\in\{\tfrac18,\tfrac14,\tfrac12,1\}$; the interface is drawn in black
($k=3$, $n=64$).}\label{fig:snaps2}
\end{figure}

\begin{figure}[htbp]\centering
\includegraphics[width=\linewidth]{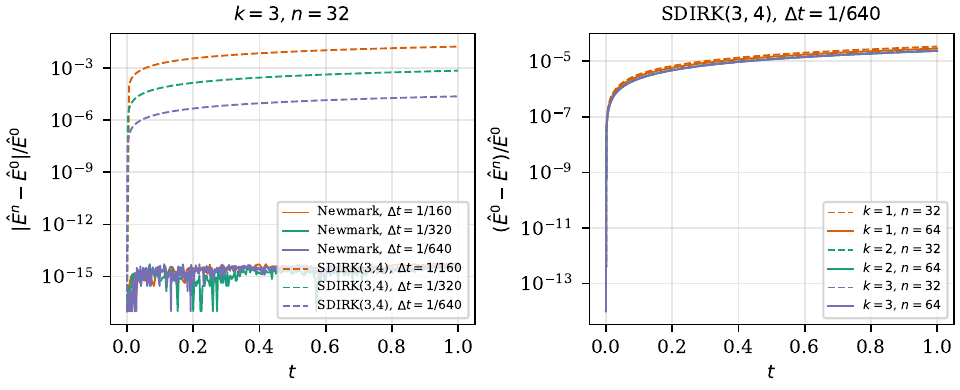}
\caption{Test case 2. Relative variation of the discrete energy \eqref{eq:discenergy}: the
Newmark scheme (left) conserves it to machine precision, the SDIRK schemes (right) dissipate
it.}\label{fig:energy2}
\end{figure}

\begin{table}[htbp]\centering
\caption{Test case 2 with $k=3$, $n=32$: maximum relative error (in \%) at the two sensors
over $[0,t_{*i}]$ and relative energy loss over $[0,T]$, as a function of the time step, for
the Newmark and the SDIRK$(3,4)$ schemes.}\label{tab:dtstudy}
\small
\begin{tabular}{llrrrrr}
\toprule
scheme & $\Delta t$ & $v_x(S_1)$ & $v_y(S_1)$ & $v_x(S_2)$ & $v_y(S_2)$ & energy loss\\
\midrule
Newmark & $1/80$ & 29.01 & 28.97 & 10.56 & 26.40 & $<10^{-12}$\\
 & $1/160$ & 12.32 & 10.46 & 2.61 & 7.92 & $<10^{-12}$\\
 & $1/320$ & 8.95 & 5.60 & 1.13 & 2.43 & $<10^{-12}$\\
 & $1/640$ & 8.28 & 4.41 & 1.22 & 1.43 & $<10^{-12}$\\
\midrule
SDIRK(3,4) & $1/80$ & 10.33 & 10.56 & 3.74 & 8.45 & $1.9\cdot10^{-1}$\\
 & $1/160$ & 6.24 & 3.71 & 1.27 & 0.96 & $1.7\cdot10^{-2}$\\
 & $1/320$ & 7.99 & 4.01 & 1.23 & 1.23 & $7.1\cdot10^{-4}$\\
 & $1/640$ & 8.08 & 4.05 & 1.25 & 1.27 & $2.4\cdot10^{-5}$\\
\bottomrule
\end{tabular}
\end{table}

\begin{table}[htbp]\centering
\caption{Test case 2 with $k=2$, $n=16$ and $T=\tfrac14$: difference
$\|\bm{u}^N_\mathcal{T}-\bm{u}^N_{\mathcal{T},\rm ref}\|_\Omega$ with the solution computed
with SDIRK$(3,4)$ and $\Delta t=1/5120$, and observed temporal rates.}\label{tab:timeconv}
\small
\begin{tabular}{rrrrrrr}
\toprule
& \multicolumn{2}{c}{Newmark} & \multicolumn{2}{c}{SDIRK(2,3)} & \multicolumn{2}{c}{SDIRK(3,4)}\\
\cmidrule(lr){2-3}\cmidrule(lr){4-5}\cmidrule(lr){6-7}
$\Delta t$ & err & rate & err & rate & err & rate\\
\midrule
$1/40$ & $3.93\cdot10^{-6}$ & -- & $2.48\cdot10^{-6}$ & -- & $2.14\cdot10^{-6}$ & --\\
$1/80$ & $1.15\cdot10^{-6}$ & 1.77 & $5.47\cdot10^{-7}$ & 2.18 & $4.16\cdot10^{-7}$ & 2.36\\
$1/160$ & $3.02\cdot10^{-7}$ & 1.94 & $8.84\cdot10^{-8}$ & 2.63 & $4.67\cdot10^{-8}$ & 3.15\\
$1/320$ & $7.61\cdot10^{-8}$ & 1.99 & $1.19\cdot10^{-8}$ & 2.89 & $3.59\cdot10^{-9}$ & 3.70\\
$1/640$ & $1.91\cdot10^{-8}$ & 2.00 & $1.51\cdot10^{-9}$ & 2.98 & $2.45\cdot10^{-10}$ & 3.87\\
\bottomrule
\end{tabular}
\end{table}

\subsection{Test case 3: wave propagation across a linear slip interface}\label{ssec:case3}

We finally repeat the experiment of \Cref{ssec:case2} with a compliant interface,
$\alpha=\beta\in\{0,5\cdot10^{-3},2\cdot10^{-2},5\cdot10^{-2}\}$, all the other data being
unchanged. The relevant dimensionless parameters are $\omega_c$ and $Z_1$, where
$\omega_c=2\pi f_c$ and $Z_1=\varrho_1c_{{\rm P},1}$ is the acoustic impedance of the softer
medium; it ranges from $0$ to about $8$, so that the interface goes from perfectly bonded to
almost traction free. \Cref{fig:case3} (left) shows the magnitude of the discrete slip
$\jump{\bu_h}$ along $\Gamma$ at the final time --- recall that the jump is not an unknown of
the method and is recovered \emph{a posteriori} from the two cell unknowns of the cut cells
--- which grows with the compliancy, as expected. The centre and right panels show the
velocity traces at the two sensors: increasing the compliancy reduces the amplitude
transmitted to $\Omega_1$ and increases the amplitude reflected back into $\Omega_2$, which
is the physically expected behaviour of a linear slip interface \cite{Schoenberg1980}. The
same effect is visible in \Cref{fig:case3snap}, which compares the vertical velocity at
$t=\tfrac14$, when the incident wave front sweeps the interface: for the perfectly bonded
interface the transmitted front in $\Omega_1$ is clearly formed, whereas for
$\alpha=\beta=5\cdot10^{-2}$ it has almost disappeared and the energy is reflected back into
$\Omega_2$.
\Cref{tab:case3} reports, for $\alpha=\beta=2\cdot10^{-2}$, the same maximum relative errors
as in \Cref{tab:case2}, computed with respect to the solution obtained with $k=4$ on the
finest mesh; the finest configuration reproduces the reference to within $0.15\%$ on all four
components, an accuracy comparable to the perfectly bonded case, which confirms that the
method is not degraded by the presence of the compliant interface.

\begin{figure}[htbp]\centering
\includegraphics[width=\linewidth]{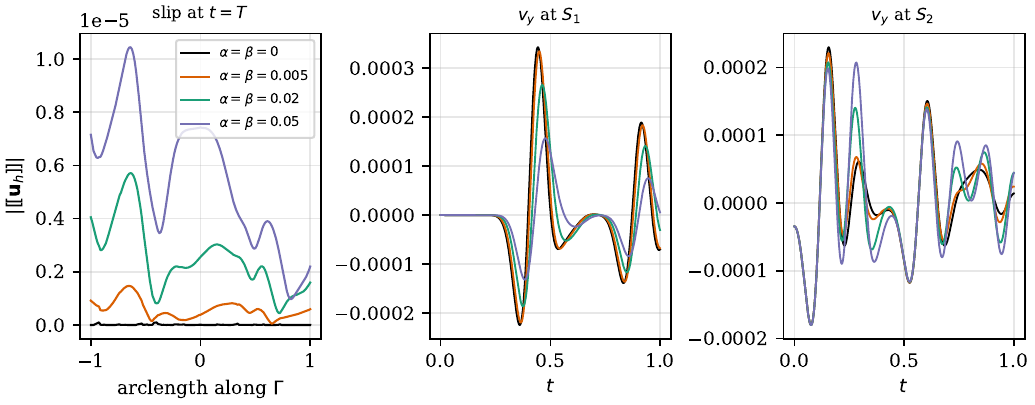}
\caption{Test case 3. Left: magnitude of the discrete slip $\jump{\bu_h}$ along $\Gamma$ at
$t=T$ for several compliancies. Centre and right: $v_y$ at the two sensors ($k=3$,
$n=64$).}\label{fig:case3}
\end{figure}

\begin{figure}[htbp]\centering
\includegraphics[width=\linewidth]{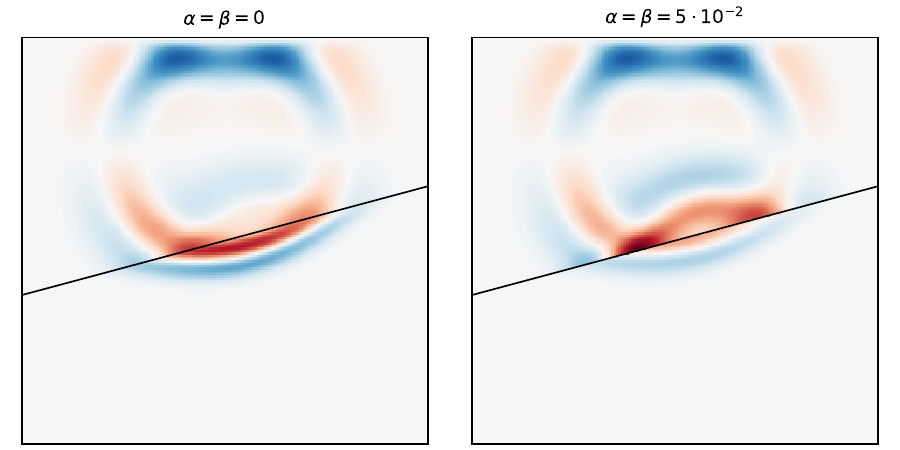}
\caption{Test case 3. Vertical velocity $v_y$ at $t=\tfrac14$ for a perfectly bonded (left)
and for a compliant (right) interface ($k=3$, $n=64$); the colour scale is the same in both
panels.}\label{fig:case3snap}
\end{figure}

\begin{table}[htbp]\centering
\caption{Test case 3 ($\alpha=\beta=2\cdot10^{-2}$). Maximum relative error (in \%) of the
velocity components at the two sensors over $[0,t_{*i}]$, with respect to the reference
solution ($k=4$, $n=64$); SDIRK$(3,4)$ with $\Delta t=1/640$.}\label{tab:case3}
\begin{tabular}{rrrrrrrr}
\toprule
$k$ & $n$ & $\lambda_w/h$ & $N_{\rm dof}$ & $v_x(S_1)$ & $v_y(S_1)$ & $v_x(S_2)$ & $v_y(S_2)$\\
\midrule
1 & 32 & 7.4 & 20928 & 51.08 & 54.10 & 9.26 & 19.69\\
1 & 64 & 14.8 & 82848 & 11.91 & 11.03 & 1.10 & 2.84\\
\midrule
2 & 32 & 7.4 & 33464 & 17.77 & 18.80 & 1.57 & 5.14\\
2 & 64 & 14.8 & 132516 & 1.85 & 0.80 & 0.18 & 0.07\\
\midrule
3 & 32 & 7.4 & 48072 & 6.39 & 2.84 & 0.88 & 0.73\\
3 & 64 & 14.8 & 190428 & 0.15 & 0.07 & 0.01 & 0.01\\
\bottomrule
\end{tabular}
\end{table}

\section{Conclusions}\label{sec:concl}

We have designed and analysed an unfitted hybrid high-order method for the elastodynamics
problem with an imperfect interface of linear slip type. The method uses polynomials of
degree $k\ge1$ on the mesh faces and of degree $k+1$ in the mesh cells, the unknowns being
doubled in the cells cut by the interface, and it supports general polytopal cells and hanging
nodes, so that the cells carrying small cuts can be cured by agglomeration. Its two specific
ingredients are a local symmetric strain reconstruction on the softer side of each cut cell,
into which the interface condition is built through the factor $\bI-\bS_h\bK$, and an
interface stabilisation weighted by the regularised interface stiffness $\bS_h$ of
\eqref{eq:Sh}. As a consequence, one single formulation covers the whole range of
compliancies, from the perfectly bonded to the traction-free interface, and no unknown is
attached to the interface. We have proved that the discrete bilinear form is coercive and
continuous on the discrete space, with constants independent of the compliancy and of how the
interface cuts the mesh, and that the space semi-discrete errors converge as $h^{k+1}$ in the
energy norm and as $h^{k+2}$ in the $L^2$-norm. Combined with the Newmark scheme, the method
conserves a discrete energy exactly; combined with singly diagonally implicit Runge--Kutta
schemes of order up to four, it dissipates a controlled amount of energy and tolerates time
steps several times larger for the same accuracy. The numerical experiments of \Cref{sec:num} confirm these
rates for $k\in\{1,2,3\}$, the robustness of the method with respect to the compliancy over
sixteen orders of magnitude, including the degenerate cases $\alpha=0$ or $\beta=0$, the
necessity and the effectiveness of the agglomeration procedure in the presence of small cuts,
and the ability of the method to propagate elastic waves across an unresolved compliant
interface.



\paragraph{Funding} This work was supported by the National Natural Science Foundation of China grant 11301267, the Natural Science Foundation of Jiangsu Province grant BK20191386 and the Qing Lan Project of Jiangsu
Province. EB was partially supported by EPSRC grants EP/P01576X/1 and EP/V050400/1.

\paragraph{Data availability}
The code reproducing all numerical examples of this paper is openly available at \cite{Huang_v1_0_0_Reproduction_bundle}:\\[3mm]
{\tt{https://github.com/burmanerik/unfitted-hho-elastodynamics}}\\[3mm]
 and archived at:\\[3mm]
{\tt{https://doi.org/10.5281/zenodo.23081108.}}\\[3mm]
The repository contains the solver, the drivers for the three test cases, and the raw output of the production runs, so that every table and figure of Section 7 can be regenerated in about a minute without repeating the computations; it also includes a test suite verifying the convergence rates and the structural properties of the method.


\paragraph{Acknowledgement}
During the preparation of this manuscript, the authors used Anthropic Claude Opus 5, to assist with drafting and revising text, improving mathematical exposition, checking notation and internal consistency, editing LaTeX, and developing computational code. These tools were not used to fabricate or directly alter research data or numerical results. All AI-assisted material, including mathematical statements and computational code, was critically reviewed and validated by the authors, who take full responsibility for the accuracy, originality, and integrity of the manuscript.
\bibliographystyle{plain}
\bibliography{ref1}
\end{document}